\numberwithin{equation}{section}
\numberwithin{figure}{section}
\useunder{\uline}{\ul}{}
\newtheorem{Definition}{Definition}[section]
\newtheorem{Theorem}{Theorem}[section]
\newtheorem{Lemma}{Lemma}[section]
\newtheorem{Remark}{Remark}[section]
\begin{document}

\begin{frontmatter}

\title{A Separation Principle for  Conditional Mean-Field Type Linear Quadratic Optimal Control Problem}
\tnotetext[mytitlenote]{This work is supported by the National Key R\&D Program of China under Grant No. 2022YFA1006100, the NSFC under Grant No. 61925306 and the NSF of Shandong Province under Grant No. ZR2019ZD42.}

\author[]{Zhongbin Guo}
\ead{Guozhongbin0628@163.com}

\author[]{Guangchen Wang\corref{cor1}}
\ead{wguangchen@sdu.edu.cn}

\address{School of Control Science and Engineering, Shandong University, Jinan 250061,   China}
\cortext[cor1]{Corresponding author.}

%
%


%
%

\begin{abstract}
This paper investigates a conditional mean-field type linear  quadratic (LQ) optimal control problem with partial observation and regime switching, where the conditional expectations of the  state and control given the history of Markov chain enter into the dynamics and cost. The exact regime of Markov chain is accessible, whereas the system state can only be partially observed.  A separation principle is established, showing that the estimate and control  procedures  can be separated and   implemented independently. It extends the classical separation principle to conditional mean-field system. Utilizing two sets of Riccati equations and a set of first-order ordinary differential equations, we derive the feedback representation of the  optimal control. To illustrate the effectiveness of the theoretical results,  two applications with numerical simulations are provided,  including a one-dimensional LQ example and a coupled electrical machines control problem.
\end{abstract}

\begin{keyword}
Partial observation; Linear quadratic; Separation principle; Conditional mean-field; Regime switching; Backward stochastic differential equation  
\end{keyword}

\end{frontmatter}

\section{Introduction}
Let $(\Omega,\mathcal{F},\mathbb{F},\mathbb{P})$ be a complete filtered probability space supporting a   Gaussian random vector $\xi$, two standard Brownian motions $W=\{W_t\}_{t\in[0,T]}
$ and $\bar{W}=\{\bar{W}_t\}_{t\in[0,T]}
$, and a continuous-time Markov chain $\theta=\{\theta_t\}_{t\in[0,T]}
$ with initial regime $\theta_0=\ell_0$. Define $\mathcal{F}^{\theta}_t=\sigma\{\theta_s,s\in[0,t]\}$. Let $\mathbb{E}^{\theta}_t[\cdot]=\mathbb{E}[\cdot|\mathcal{F}^{\theta}_t]$, which denotes the conditional expectation given the history of Markov chain  $\theta$ up to  time $t$.  Consider the  linear stochastic differential equation (SDE)
\begin{equation}
	\label{equ:state}
	\left\{\begin{aligned}
		\mathrm{d}X^v_t=&\ \left[A_{(\theta_t,t)}X^v_t+\widehat{A}_{(\theta_t,t)}\mathbb{E}^{\theta}_t[X^v_t]+B_{(\theta_t,t)}v_t+\widehat{B}_{(\theta_t,t)}\mathbb{E}^{\theta}_t[v_t]+b_{(\theta_t,t)}\right]\mathrm{d}t\\
		&\ +C_{(\theta_t,t)}\mathrm{d}W_t+\bar{C}_{(\theta_t,t)}\mathrm{d}\bar{W}_t,\\
		X^v_0=&\ \xi,
	\end{aligned}\right.
\end{equation}where all the coefficients are random  as they are modulated by the Markov chain $\theta$.

One feature of (\ref{equ:state})  is that    continuous dynamics $X^{v}$ and discrete events $\theta$ coexist. The two-component process $(X^{v},\theta)$ is commonly referred to as a switching diffusion or  regime-switching diffusion. In many practical situations, models based solely on continuous solutions of  SDEs with deterministic coefficients are insufficient to capture real phenomena. For instance, financial markets may randomly switch between   ``bullish'' and  ``bearish''  modes.  The market parameters such as return rate  and volatility rate of  stock price differ substantially across these modes \cite{Zhou 2003}. Similarly, in  solar electrical generating system consisting of movable mirrors (heliostats), the  flow rates and pressures  parameters of  steam in pipeline show huge differences  in  clear sky day  and dense cloud one \cite{Sworder 1983}.    The common  feature lies in the above phenomena is that the internal coefficients of   dynamics  change in response  to   external   environments that evolve  in a discrete manner.  Thanks to the  ability to effectively describe such phenomena,    switching diffusion  has been widely applied to the modeling, analysis, and optimization of systems subject to abrupt changes in random environments, see monograph \cite{Yin hybrid 2009} for details.

Another distinctive feature of \eqref{equ:state} is the incorporation of the conditional expectations of  state and  control given the history of the Markov chain i.e.,  $\mathbb{E}^{\theta}_t[X^v_t]$ and $\mathbb{E}^{\theta}_t[v_t]$. Mean-field type SDEs, in which expectations appear  in the state equation, have been extensively studied in recent years, see \cite{Anderson 2011,Li 2012,Sun 2017,Wang 2022,Yong 2013,Zhang general 2018} and references therein. By contrast, conditional mean-field type SDEs such as (\ref{equ:state}) have received relatively little attention. The introduction of the conditional expectations is of substantial significance.  Theoretically,   it is closely related to mean-field model  with  Markovian switching. In the new era, the increase of systems structure complexity calls for effective methods to address decision-making problems in large-scale systems,  where the major challenge is to reduce computational complexity and avoid the curse of dimensionality. The mean-field framework offers a classical approach: it replaces the influence of the entire system on an individual particle (agent or decision-maker) with the empirical probability distribution of the population, thereby enabling an effective description of interactions among a large number of particles. A key issue is to analyze the asymptotic behavior of mean-field models as the number of particles tends to infinity, since such results help simplify the resolution of large-scale decision-making problems, see \cite{Dawson 1983,Sznitman 1989}  and references therein. More recently, \cite{Nguyen George Yin Hoang 2020} studied the law of large numbers for a mean-field model with a common Markov chain. Their results show that, under the influence of a shared switching process, the particles are no longer independent, and the limiting empirical measure is not deterministic but a random measure conditional on the Markov chain’s history. This observation motivates the setup adopted in this paper and justifies the use of conditional expectations $\mathbb{E}^{\theta}_t[\cdot]$ in place of unconditional expectations $\mathbb{E}[\cdot]$.

Motivated by \cite{Nguyen George Yin Hoang 2020}, the study of optimization problems involving conditional expectations given the Markov chain in both the dynamics and the cost functional has attracted growing attention in recent years. Interested readers may refer to \cite{Nguyen 2020,Nguyen general 2021} for maximum principles, \cite{Lv Xiong Zhang 2023} for Stackelberg games, and \cite{Barreiro-Gomez linear 2019,Lv linear 2023,Rolón Gutiérrez markovian 2024} for nonzero-sum differential games. It is worth noting, however, that these works are conducted under the assumption that the state is perfectly observable. In practice, the exact value of the state variable is often inaccessible. For instance, the appreciate return rate process of a stock is unobservable and can only be partially inferred from stock prices. Consequently,  investors should  make decisions on the basis of stock prices information, see, e.g.,  \cite{Xiong mean 2007}.

In this paper, we assume that the state process $X^{v}$ is not directly observable and can only be measured through a noise-corrupted observation process $Y^{v}$, which satisfies
\begin{equation}
	\label{equ:obser}
	\left\{\begin{aligned}
		\mathrm{d}Y^v_t=&\ \left[F_{(\theta_t,t)}X^v_t+\widehat{F}_{(\theta_t,t)}\mathbb{E}^{\theta}_t[X^v_t]+f_{(\theta_t,t)}\right]\mathrm{d}t+\mathrm{d}W_t,\\
		Y^v_0=&\ 0.
	\end{aligned}\right.
\end{equation}
While state $X^{v}$ is unobservable,  Markov chain $\theta$ is assumed to be perfectly observed. Examples of such settings, where the state variable is partially observed  but  the Markov chain is observable, can be found in \cite{Costa discrete 1995}. Naturally, the design of control input should rely on the information provided  by  the observation process $Y^{v}$ and the switching process $\theta$. In other words, the control should be adapted to the filtration $\mathbb{F}^{Y^v,\theta}$, generated jointly by $Y^{v}$ and $\theta$.  Although the switching process $\theta$  is not affected by $v$, the observation process $Y^v$ depends on $v$ via $X^v$ indirectly. Then there exists a circular (coupling) dependence between the control process $v$ and   $\mathbb{F}^{Y^v,\theta}$, which is similar to the ``chicken and egg matter''. In this case, the admissible controls set does not  possess  convexity anymore. Actually,  it is not even easy to determine whether a control process $v$ is admissible or not. This immediately leads to a difficulty that the classical variation method is invalid, rendering the computation of optimal control hopeless, see \cite{Bensoussan 1992} for further details.

The cost functional considered in this paper is given by
\begin{equation}
	\label{equ:ori cost}
\begin{aligned}
 J^{P}\left(\xi,\ell_0; v\right)
= &\  \frac{1}{2} \mathbb{E}\left\{\int _ { 0 } ^ { T } \left[\left\langle\left[\begin{array}{l}
X^v_t \\
v_t
\end{array}\right],\left[\begin{array}{cc}
H_{(\theta_t,t)} & G^{\top}_{(\theta_t,t)} \\
G_{(\theta_t,t)} & R_{(\theta_t,t)}
\end{array}\right]\left[\begin{array}{l}
X^v_t \\
v_t
\end{array}\right]\right\rangle\right.\right.\\
&\ \left.\left.+\left\langle\left[\begin{array}{l}
\mathbb{E}^{\theta}_t[X^v_t] \\
\mathbb{E}^{\theta}_t[v_t]
\end{array}\right],\left[\begin{array}{cc}
\widehat{H}_{(\theta_t,t)} & \widehat{G}^{\top}_{(\theta_t,t)} \\
\widehat{G}_{(\theta_t,t)} & \widehat{R}_{(\theta_t,t)}
\end{array}\right]\left[\begin{array}{c}
\mathbb{E}^{\theta}_t[X^v_t] \\
\mathbb{E}^{\theta}_t[v_t]
\end{array}\right]\right\rangle \right]\mathrm{d} t\right.\\
&\ \left. +\langle X^v_T, S_{\theta_T} X^v_T\rangle+\langle\mathbb{E}^{\theta}_T[X^v_T], \widehat{S}_{\theta_T} \mathbb{E}^{\theta}_T[X^v_T]\rangle\right\}.
\end{aligned}
\end{equation}This functional explicitly incorporates quadratic terms of conditional expectations in both the running and terminal costs. Such a formulation is motivated by a concern of (weighted) variance reduction in cost  (for example,  the problem in Section \ref{sec:coupled ele machine} in this paper and that considered in \cite{Barreiro-Gomez linear 2019}) and conditional mean-variance problem \cite{Nguyen general  2021}. It is worth noting that both the system dynamics and the cost functional depend on the conditional distributions of the state and control. Therefore,    the problem under consideration is a  conditional mean-field type linear quadratic (LQ) optimal  control problem with partial observation and regime switching, which can be stated as follows.\\
\noindent
\textbf{\textbf{(PO-CMF)}:} Find an admissible control $u^{*}\in\mathcal{U}_{ad}$ such that  $J^{P}\left(\xi,\ell_0; u^{*}\right)=\inf_{v\in\mathcal{U}_{ad}}J^{P}\left(\xi,\ell_0; v\right)$ holds subject to (\ref{equ:state})-(\ref{equ:obser}),  where the precise definition of the admissible controls set $\mathcal{U}_{ad}$ is provided in Definition \ref{def:admis cont}.

Separation principle is an effective tool for addressing LQ optimal control problems under partial observation. It asserts that, in order to construct the optimal control, one may first estimate the unobservable state, which involves obtaining the optimal filter for the true state based on the available information, and then substitute this estimate for the true state in the optimal control law derived under complete observation case. In this way, the original problem is reduced to two sub-problems of filtering and control that can be solved independently. However, the category of  systems for which the separation principle works is rather limited (see, e.g., \cite[Section 2]{Bensoussan 1992}, \cite{Wonham 1968}). Without conditional mean-field terms,  separation principle for partially observed regime-switching diffusions has been studied in \cite{Dufour adaptive 2002,Ji 1992} and \cite{Mariton1990}. In particular, \cite{Ji 1992} established a separation principle for the LQ problem driven by switching diffusions by virtue of  Bellman’s dynamic programming approach. Later, \cite{Dufour adaptive 2002} extended the results of \cite{Ji 1992} to systems with non-Gaussian initial conditions and stochastic integrals with respect to counting processes, and proved that the separation principle still holds. Due to the fact that  cost functional (\ref{equ:ori cost}) involves nonlinear functions of conditional expectations, Bellman’s optimality principle no longer applies (see, \cite{Bjork  Tomas Agatha 2017}). Consequently, the method in \cite{Ji 1992} cannot be employed to address our problem. Moreover, the  filtering results in \cite{Dufour adaptive 2002} and \cite{Ji 1992} cannot be directly utilized  due to the presence of conditional expectations in the  dynamics we consider. In \cite{Mariton1990}, although the entire path information  $\{Y^v_s, s\in[0,t]\}$ and $\{\theta_s, s\in[0,t]\}$ is available to the decision-maker at time $t$, the filtering equation is derived only based on the instantaneous regime $\theta_t$ of the Markov chain together with the observation history $\{Y^v_s, s\in[0,t]\}$. As pointed out in \cite{Ji 1992}, the state estimate proposed in \cite{Mariton1990} can therefore be regarded as a suboptimal filter. In contrast, our paper derives a closed-form filtering system utilizing all the available information $\{ Y^v_s,\theta_s, s\in[0,t]\}$. Consequently, even without incorporating conditional mean-field terms, the filtering and separation principle results presented here are essentially different from those in \cite{Mariton1990}.

The main contribution of our paper is that we establish a  separation principle  for  \textbf{(PO-CMF)}, enlarging the categories of the  systems for which   this principle holds.     The   establishment of separation principle follows from four main steps.    First,  we give a modified definition of admissible control,  which removes the coupled dependence between control and observation.  Second, we  derive  a  closed system for optimal  filtering of $X^v$  given $\mathbb{F}^{Y^v,\theta}$. Third, utilizing the optimal filtering result,  \textbf{(PO-CMF)} is transformed into an equivalent problem  (see \textbf{(FO-CMF)} in Section~\ref{sub:equi prob}) with full observation. The  optimal control of  \textbf{(FO-CMF)} is obtained.  Finally,  we  derive a feedback representation for  optimal control of \textbf{(PO-CMF)} which shows that the separation principle  is valid for \textbf{(PO-CMF)}. In the above procedures,  several new features  together with challenges emerge and novel techniques are adopted,  which are listed  as follows. 

\begin{enumerate}
\item  Employing a state decomposition technique, we split the state-observation process $(X^v,Y^v)$ into an  autonomous part  $(\eta,\lambda)$ and a controlled part $(X^{1,v},Y^{1,v})$ (see (\ref{equ:sum rela})). Based on this,  a modified definition of admissible control is introduced, under which the coupling dependence  between control and observation is eliminated. This enables the use of the variation method to derive the optimality conditions.

\item By exploiting the conditional Gaussian property, we derive a closed system for optimal filter, which consists of explicit equations for the filtering process $\mathbb{E}^{Y^v,\theta}_t[X^v_t]$ and the conditional covariance $\Phi_t$. The presence of  switching process $\theta$ destroys the Gaussian structure, which is essential for obtaining a closed filtering system in the classical Kalman filtering framework. To address this, we prove that $\eta$ is conditionally Gaussian given $\lambda$ and $\theta$. This enables us to derive a closed system for the filtering process $\mathbb{E}^{\lambda,\theta}_t[\eta_t]$, and further for $\mathbb{E}^{Y^v,\theta}_t[X^v_t]$ based on the relationship between $\eta_t$ and $X^v_t$. Different with the classical Kalman filtering theory, where the conditional covariance is a solution of deterministic   Riccati equation,  the conditional covariance $\Phi_t$ in our paper  satisfies a  Riccati equation with Markov-modulated coefficients (see (\ref{equ:Riccati equ for error})).   As a result,  unlike other coefficients that  depend only   on the instant regime of Markov chain $\theta_t$ , $\Phi_t$ depends on the entire sample  path  $\{\theta_s,s\in[0,t]\}$.  Consequently, the filter gain $Q_t$ (see (\ref{equ:def of QX in con})) is inherently non-Markovian.

\item Utilizing  the orthogonality between the estimation error and the optimal filter process, we transform \textbf{(PO-CMF)} into  \textbf{(FO-CMF)}, which is a problem with full observation  driven by the optimal filter process $\mathbb{E}^{Y^v,\theta}_t[X^v_t]$. \textbf{(FO-CMF)} possesses two distinct characters. First,  \textbf{(FO-CMF)} is still a conditional mean-field type control problem and  time-inconsistent, where the Bellman optimality principle does not hold.   Second, although all the  coefficients in \textbf{(PO-CMF)} are Markovian, \textbf{(FO-CMF)} becomes an optimal control problem with a non-Markovian switching model due to the non-Markovian nature of the filter gain $Q_t$. For the time-inconsistency, we resort to maximum principle to seek for optimal control.  It is well known that the adjoint process plays a crucial role in characterizing the optimality conditions in maximum principle. Since both the filtering  and  control processes are $\mathbb{F}^{Y^v,\theta}$-adapted, the adjoint process, which is the  solution to a backward stochastic differential equation (BSDE),  should be adapted to $\mathbb{F}^{Y^v,\theta}$ as well. This immediately raises a barrier when constructing adjoint  process  for the reason that observation  $Y^v$ is not a Brownian motion.  To overcome this difficulty, we establish that any $\mathbb{F}^{Y^v,\theta}$-martingale admits a stochastic integral representation with respect to the innovation process and the fundamental martingale associated with the Markov chain. Building on this result, we formulate the adjoint equation  and present a necessary and sufficient condition for optimality of  \textbf{(FO-CMF)}. By employing the four-step scheme  approach to decouple the Hamiltonian system, we derive a feedback representation of the optimal control through two sets of coupled  Riccati equations and a set of coupled linear ordinary differential equations (ODEs). For the non-Markovian  nature, different from the Markovian case in \cite{Moon Başar 2024} and  \cite{Sun Xiong 2023}, where deterministic ODE is enough to compute optimal cost,  we introduce a BSDE driven by Markov chain to explicitly characterize the optimal cost.

\item According to the equivalence between \textbf{(PO-CMF)} and \textbf{(FO-CMF)}, we obtain the optimal control for \textbf{(PO-CMF)}. The result demonstrates that the optimal control of \textbf{(PO-CMF)} can be constructed by substituting the state estimate for the true state in the feedback control law of the complete observation case. This extends the classical separation principle to linear conditional mean-field systems and enlarge  the category of systems for which the separation principle remains valid. 
\end{enumerate}

The rest of the  paper is organized as follows.  Section \ref{seq: notations and preliminaries} presents the notations and assumptions on the coefficients of the model. Section \ref{sec: separation principle} establishes the separation principle of  \textbf{(PO-CMF)}. Section \ref{sec: applications  and numerical simulations} provides two applications with numerical simulations, including a one-dimensional example and a coupled electrical machines control problem. Section \ref{sec: conclusion} draws some conclusions.

\section{Notations and Assumptions}
\label{seq: notations and preliminaries}

For integers $n$ and $m$, we denote  by $\mathbb{R}^{n\times m}$ the  space of $n\times m$  real  matrices, by $\mathbb{S}^n$ the  space of symmetric $n\times n$  real  matrices. The symbol $\mathrm{tr}$ and the  superscript ${}^\top$ represent the trace and the transpose of a matrix, respectively. The notation $\langle\cdot,\cdot\rangle$ stands for the inner product of two vectors, and  $|\cdot|$ denotes the corresponding Euclidean norm.  $\|\cdot\|_{\infty}$ represents the $\infty$-norm of a matrix.  If $M,N\in \mathbb{S}^{n}$, the notation $M\geq0$ means  that  $M$ is positive semi-definite, while  $M\geq N$ means  that $M-N\geq 0$. For a measurable set $\mathcal{A}$, $\mathbb{I}_{\mathcal{A}}$  denotes the indicator function of  $\mathcal{A}$. Moreover, we use $\mathcal{I}_{n\times n}$  and $0_{n\times m}$ to denote  $n\times n$  identity matrix and $n\times m$ zero matrix, respectively.

We work with a   filtered probability space $(\Omega,\mathcal{F},\mathbb{F},\mathbb{P})$ satisfying the usual conditions. Let $W=\{W_t\}_{t\in[0,T]}
$ and $\bar{W}=\{\bar{W}_t\}_{t\in[0,T]}
$ be standard Brownian motions taking values in  $\mathbb{R}^r$ and $\mathbb{R}^d$, respectively. Let   $\theta=\{\theta_t\}_{t\in[0,T]}
$ be a time homogeneous Markov chain with  finite state space  and  $\xi$ be an $\mathbb{R}^n$-valued  Gaussian random variable with mean  $\mathbb{E}[\xi]=\mu$ and  covariance matrix $\mathbb{E}[\left(\xi-\mu\right)\left(\xi-\mu\right)^{\top}]=\sigma$.  We assume  that processes $W$, $\bar{W}$, $\theta$, and random variable $\xi$ are mutually independent. For convenience,  we set $\mathcal{F}_t=\sigma\{\xi,W_s,\bar{W}_s,\theta_s,s\in[0,t]\}$, $\mathcal{F}^{\theta}_t=\sigma\{\theta_s,s\in[0,t]\}$ for any $t\in[0,T]$ and let $\mathbb{F}=\{\mathcal{F}_t\}_{t\in[0,T]}
$ and $\mathbb{F}^{\theta}=\{\mathcal{F}^{\theta}_t\}_{t\in[0,T]}
$. The notation $\mathbb{E}^{\beta}_t[\cdot]=\mathbb{E}[\cdot|\mathcal{F}^{\beta}_t]$ denotes the conditional expectation  given the information generated by process $\beta$ up to  time $t$, where $\beta$ may represent one or more of $W,\bar{W},$ or $\theta$.   Additionally, if $\mathcal{X}$ and  $\mathcal{Y}$  are two  semimartingales,  we denote by  $[\mathcal{X}]$  the  optional  quadratic variation  of $\mathcal{X}$ and by $[\mathcal{X},\mathcal{Y}]$  the optional  quadratic covariation  between $\mathcal{X}$ and $\mathcal{Y}$. The Markov chain $\theta$ takes values in a finite state space  $\mathbb{D}=\left\{1,\dots,D\right\}$, where $D$ is the number of regimes. At initial time, the Markov chain starts from $\theta_0=\ell_0$. Let $\Pi=\{\pi^{\ell j}\}^D_{ \ell,j=1}$ be its  transition rates matrix, where  $\pi^{\ell j}\geq 0$ is  the instantaneous intensity of a transition of  $\theta$ from state $\ell$ to $j$ $(\ell\neq j)$ and it holds that $\pi^{\ell \ell}=-\sum_{j=1,j\neq \ell}^{D}\pi^{\ell j}$ for any $\ell\in\mathbb{D}$. For $\ell,j\in\mathbb{D}$, $\ell\neq j$ and $t\in[0,T]$, define $[M^{\ell j}]_t=\sum_{s\in[0,t]}\mathbb{I}_{\{\theta_{s-}=\ell\}}\mathbb{I}_{\{\theta_s=j\}}, \langle M^{\ell j}\rangle_t=\pi^{\ell j}\int_{0}^{t}\mathbb{I}_{\{\theta_{s-}=\ell\}}\mathrm{d}s,M^{\ell j}_t=[M^{\ell j}]_t-\langle M^{\ell j}\rangle_t.$ It follows from  \cite[Lemma IV.21.12]{Rogers 2000} that  $M^{\ell j}=\{M^{\ell j}_t\}_{t\in[0,T]}$  for $\ell,j\in\mathbb{D}$ is a purely discontinuous and square-integrable martingale with respect to $\mathbb{F}^{\theta}$, whose optional and predictable quadratic variations are $[M^{\ell j}]$ and  $\langle M^{\ell j}\rangle$, respectively. For convenience, we set $[M^{\ell \ell}]_t=\langle M^{\ell \ell}\rangle_t=M^{\ell \ell}_t=0$ for any  $\ell\in\mathbb{D}$ and $t\in[0,T]$. Moreover,  by the definition of optional quadratic covariations  (\cite[Section 1.8]{Lipster 1989}), the following orthogonality relations hold: $[\bar{W},M^{\ell j}]_t=0$, $[W,M^{\ell j}]_t=0$, and $[M^{\ell j},M^{k\iota}]_t=0$ for any $ (\ell,j)\neq(k,\iota)\in\mathbb{D}\times \mathbb{D}$ and $t\in[0,T].$

Given an arbitrary Euclidean space  $\mathbb{H}$ and a filtration $\mathbb{G}$, which may vary at different locations, we  introduce the following  spaces:
\begin{equation*}
	\begin{aligned}
		 	\mathcal{S}_{\mathbb{G}}^2([0,T];\mathbb{H})=&\ \left\{\mathcal{X}|\mathcal{X} \text{ is } \mathbb{H}\text{-valued} \mathbb{G}\text{-adapted}, \text{and c}\grave{a}\text{dl}\grave{a}\text{g process such that } \mathbb{E}\left[\sup_{t\in[0,T]}|\mathcal{X}_t|^2\right]<\infty\right\},\\
			\mathcal{L}_{\mathbb{G}}^2([0,T];\mathbb{H})=&\ \left\{\mathcal{X}|\mathcal{X} \text{ is } \mathbb{H}\text{-valued and }\mathbb{G}\text{-progressively measurable process such that }  \mathbb{E}\left[\int_{0}^{T}|\mathcal{X}_t|^2\mathrm{d}t\right]<\infty\right\},\\ \mathcal{L}_{\mathbb{G}}^0([0,T];\mathbb{H})=&\ \left\{\mathcal{X}|\mathcal{X} \text{ is } \mathbb{H}\text{-valued and } \mathbb{G}\text{-progressively measurable process}\right\},\\
			\mathcal{M}_{\mathbb{G}}^2([0,T];\mathbb{H})=&\ \left\{\mathcal{X}=\{\mathcal{X}^{\ell j}\}_{\ell,j=1}^{D}| \mathcal{X}^{\ell j}\in \mathcal{L}_{\mathbb{G}}^0([0,T];\mathbb{H}),  \sum_{\ell,j=1}^{D}\mathbb{E}\left[\int_{0}^{T}|\mathcal{X}^{\ell j}_t|^2\mathrm{d}[M^{\ell j}]_t\right]<\infty,  \text{ and }\mathcal{X}^{\ell \ell}=0 \right\}. 
	\end{aligned}
\end{equation*}

For  process $\mathcal{X}\in\mathcal{M}_{\mathbb{G}}^2([0,T];\mathbb{H})$ and martingale $M^{\ell j}$ with $\ell,j\in\mathbb{D}$, we define 
\begin{equation*}
	\begin{aligned}
		\int_{0}^{t}\mathcal{X}_s\bullet\mathrm{d}M_s=\sum_{\ell,j=1}^{D}\int_{0}^{t}\mathcal{X}^{\ell j}_s\mathrm{d}M^{\ell j}_s, \hspace{0.5cm} \text{and} \hspace{0.5cm}  \mathcal{X}_t\bullet\mathrm{d}M_t=\sum_{\ell,j=1}^{D}\mathcal{X}^{\ell j}_t\mathrm{d}M^{\ell j}_t.
	\end{aligned}
\end{equation*}

To simplify the  notations, for any $\ell\in\mathbb{D}$, we define  $\check{\phi}_{(\ell,t)}=\phi_{(\ell,t)}+\widehat{\phi}_{(\ell,t)}$ with $\phi=A,B,H,G,R$,  and  $\check{S}_{\ell}=S_{\ell}+\widehat{S}_{\ell}$.

Regarding the coefficients in (\ref{equ:state})-(\ref{equ:ori cost}), the following assumptions are in force throughout our paper.
\begin{enumerate}[label=$\mathbf{(A\arabic*)}$,leftmargin=3em]
\item \label{ass:coef in dyna}For any $\ell\in\mathbb{D}$,  $A_{(\ell,t)},\widehat{A}_{(\ell,t)}\in\mathbb{R}^{n\times n}$, $B_{(\ell,t)},\widehat{B}_{(\ell,t)}\in\mathbb{R}^{n\times m}$, $C_{(\ell,t)}\in\mathbb{R}^{n\times r}$, $\bar{C}_{(\ell,t)}\in\mathbb{R}^{n\times d}$, $F_{(\ell,t)},\widehat{F}_{(\ell,t)}\in\mathbb{R}^{r\times n}$, $b_{(\ell,t)}\in\mathbb{R}^n$, and $f_{(\ell,t)}\in\mathbb{R}^{r}$ are deterministic and bounded functions  on $[0,T]$.
\item\label{ass: coe in cost one} For any $\ell\in\mathbb{D}$,  $H_{(\ell,t)},\widehat{H}_{(\ell,t)}\in\mathbb{S}^{n}$, $G_{(\ell,t)},\widehat{G}_{(\ell,t)}\in\mathbb{R}^{m\times n}$, and  $R_{(\ell,t)},\widehat{R}_{(\ell,t)}\in\mathbb{S}^{m}$  are deterministic and bounded functions  on $[0,T]$, $S_{\ell},\widehat{S}_{\ell}\in\mathbb{S}^n$ are deterministic and bounded matrices. 
\item\label{ass: coe in cost two} For any $\ell\in\mathbb{D}$, $S_{\ell},\check{S}_{\ell}\geq 0$.  There exists two constants $\delta_1\geq 0$ and  $\delta_2>0$ such that $H_{(\ell,t)},\check{H}_{(\ell,t)}\geq \delta_1 \mathcal{I}_{n\times n}$ and $R_{(\ell,t)},\check{R}_{(\ell,t)}\geq \delta_2\mathcal{I}_{m\times m}$ for any  $\ell\in\mathbb{D}$, $t\in[0,T]$. Moreover,  $\|G_{(\ell,t)}\|_{\infty}^2, \|\check{G}_{(\ell,t)}\|_{\infty}^2<\delta_1\delta_2$ if $\delta_1>0$ and $G_{(\ell,t)}=\widehat{G}_{(\ell,t)}=0$ otherwise.
\end{enumerate}

\begin{Remark}
	 We set the intensity coefficient before $\mathrm{d}W_t$ in \eqref{equ:obser} to be the identity matrix just for the sake of notational simplicity. In fact, all arguments in this paper remain valid as long as the intensity coefficient satisfies the nondegeneracy assumption.
\end{Remark}

Note that all the coefficients in dynamics and cost are time-varying and modulated by  Markov chain. However, for notational simplicity, parameter $(\theta_t,t)$ in the  coefficients and time variable $t$ in the processes will be frequently omitted hereafter whenever no confusion arises.

\section{Separation Principle for \textbf{(PO-CMF)}}
\label{sec: separation principle}
\subsection{A Modified Definition of Admissible Controls}

In this section,  we adopt a state decomposition technique to  provide a modified definition of admissible control, which eliminates the coupled dependence between control and observation.

Observing the  linear structure of state-observation dynamics  (\ref{equ:state})-(\ref{equ:obser}), we introduce   processes $\eta=\{\eta_t\}_{t\in[0,T]}
$ and $\lambda=\{\lambda_t\}_{t\in[0,T]}
$ by
\begin{equation}
	\label{equ:eta}
	\left\{\begin{aligned}
		\mathrm{d}\eta_t=&\ \left[A_{(\theta_t,t)}\eta_t+\widehat{A}_{(\theta_t,t)}\mathbb{E}^{\theta}_t[\eta_t]+b_{(\theta_t,t)}\right]\mathrm{d}t +C_{(\theta_t,t)}\mathrm{d}W_t+\bar{C}_{(\theta_t,t)}\mathrm{d}\bar{W}_t,\\
		\eta_0=&\ \xi,
	\end{aligned}\right.
\end{equation}and
\begin{equation}
	\label{equ:lambda}
	\left\{\begin{aligned}
		\mathrm{d}\lambda_t=&\ \left[F_{(\theta_t,t)}\eta_t+\widehat{F}_{(\theta_t,t)}\mathbb{E}^{\theta}_t[\eta_t]+f_{(\theta_t,t)}\right]\mathrm{d}t+\mathrm{d}W_t,\\
		\lambda_0=&\ 0.
	\end{aligned}\right.
\end{equation}Systems (\ref{equ:eta})-(\ref{equ:lambda}) can be regarded as   autonomous state-observation dynamics,  where the influence of  control is taken away.  In other words, they  separate the uncertainties from control process $v$ compared with (\ref{equ:state})-(\ref{equ:obser}).

Then for any  square integrable (with respect to  $\omega$ and $t$) process $v$ without imposing any adaptation requirement, we introduce $X^{1,v}=\{X^{1,v}_t\}_{t\in[0,T]}$ and $Y^{1,v}=\{Y^{1,v}_t\}_{t\in[0,T]}$   by
\begin{equation}
	\label{equ:x1}
	\left\{\begin{aligned}
		\mathrm{d}X^{1,v}_t=&\ \left[A_{(\theta_t,t)}X^{1,v}_t+\widehat{A}_{(\theta_t,t)}\mathbb{E}^{\theta}_t[X^{1,v}_t] +B_{(\theta_t,t)}v_t+\widehat{B}_{(\theta_t,t)}\mathbb{E}^{\theta}_t[v_t]\right]\mathrm{d}t,\\
		X^{1,v}_0=&\ 0,
	\end{aligned}\right.
\end{equation}and
\begin{equation}
	\label{equ:Y1}
	\left\{\begin{aligned}
		\mathrm{d}Y^{1,v}_t=&\ \left[F_{(\theta_t,t)}X^{1,v}_t+\widehat{F}_{(\theta_t,t)}\mathbb{E}^{\theta}_t[X^{1,v}_t]\right]\mathrm{d}t,\\
		Y^{1,v}_0=&\ 0.
	\end{aligned}\right.
\end{equation}

From Lemma 2.1 in \cite{Nguyen 2020}, we can derive that, under assumption \ref{ass:coef in dyna}, equations (\ref{equ:eta})-(\ref{equ:Y1}) admit unique solutions $(\eta,\lambda)$ and $(X^{1,v},Y^{1,v})$. Moreover, it is straightforward to verify that $\eta+X^{1,v}$  and $\lambda+Y^{1,v}$ solve (\ref{equ:state}) and (\ref{equ:obser}), respectively.  Therefore, we obtain the  following decomposition for solutions of (\ref{equ:state}) and (\ref{equ:obser}):
\begin{equation}
	\label{equ:sum rela}
	\begin{aligned}
		 X^v=\eta+X^{1,v}, \hspace{0.5cm} \text{ and } \hspace{0.5cm}Y^v=\lambda+Y^{1,v}.
	\end{aligned}
\end{equation}

Drawing support from process $\lambda$, we  introduce a  filtration $\mathbb{F}^{\lambda,\theta}=\{\mathcal{F}^{\lambda,\theta}_t\}_{t\in[0,T]}$,  where $\mathcal{F}^{\lambda,\theta}_t=\sigma\{\lambda_s,\theta_s,s\in[0,t]\}$,  and a set $\mathcal{U}=\{v|v \text{ is } \mathbb{R}^{m}\text{-valued and }  \mathbb{F}^{\lambda,\theta}\text{-adapted}$ $\text{process such that } \mathbb{E}[\sup_{t\in[0,T]}|u_t|^2]<\infty\}$. Then we present the  modified definition of  admissible control.

\begin{Definition}
	\label{def:admis cont}
	A process $v$ is referred to as an admissible control if $v\in\mathcal{U}$ and $v$ is $\mathbb{F}^{Y^v,\theta}$-adapted. Moreover, the  set  containing all the admissible controls is denoted as  $\mathcal{U}_{ad}$.
\end{Definition}

We have the following two results, which play an important role in the  subsequent treatment.

\begin{Lemma}\label{lem:filtr ident}
	For any admissible control $v\in\mathcal{U}_{ad}$, the filtration $\mathbb{F}^{Y^v,\theta}$  coincides with  $\mathbb{F}^{\lambda,\theta}$, which is independent of control $v$.
\end{Lemma}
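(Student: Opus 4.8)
The plan is to establish the two filtration equalities $\mathbb{F}^{Y^v,\theta}=\mathbb{F}^{\lambda,\theta}$ and the independence of $\mathbb{F}^{\lambda,\theta}$ from $v$ separately, and then chain them. The independence claim is immediate: the pair $(\eta,\lambda)$ solves the autonomous system \eqref{equ:eta}--\eqref{equ:lambda}, whose coefficients and driving noises $(\xi,W,\bar{W},\theta)$ make no reference to $v$; hence $\mathbb{F}^{\lambda,\theta}$ is a fixed filtration determined by the model data alone. For the harder equality, I would use the decomposition \eqref{equ:sum rela}, namely $Y^v=\lambda+Y^{1,v}$, together with the fact that $Y^{1,v}$ solves the ODE \eqref{equ:Y1}, which has \emph{no stochastic integral} — $Y^{1,v}$ is a pathwise-continuous process of finite variation built deterministically (given $\theta$) from $v$ and $X^{1,v}$, and $X^{1,v}$ in turn solves the ODE \eqref{equ:x1} driven only by $v$ and $\theta$.

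First I would prove the inclusion $\mathbb{F}^{\lambda,\theta}\subseteq\mathbb{F}^{Y^v,\theta}$. Fix an admissible $v\in\mathcal{U}_{ad}$; by Definition \ref{def:admis cont}, $v$ is $\mathbb{F}^{\lambda,\theta}$-adapted, so $X^{1,v}$ and then $Y^{1,v}$, being obtained from $v$ and $\theta$ by solving the linear ODEs \eqref{equ:x1}--\eqref{equ:Y1} (whose well-posedness is granted by the cited Lemma 2.1 in \cite{Nguyen 2020}), are also $\mathbb{F}^{\lambda,\theta}$-adapted; a standard argument shows the solution map is causal, so in fact $\mathcal{F}^{Y^{1,v},\theta}_t\subseteq\mathcal{F}^{\lambda,\theta}_t$ for every $t$. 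Then $\lambda=Y^v-Y^{1,v}$ would \emph{a priori} live in $\mathbb{F}^{Y^v,\theta}\vee\mathbb{F}^{\lambda,\theta}$, which is not yet what I want; so instead I argue the reverse inclusion first and close the loop. Second, for $\mathbb{F}^{Y^v,\theta}\subseteq\mathbb{F}^{\lambda,\theta}$: since $v$ is $\mathbb{F}^{\lambda,\theta}$-adapted, the same causality argument gives $Y^{1,v}$ adapted to $\mathbb{F}^{\lambda,\theta}$, and hence $Y^v=\lambda+Y^{1,v}$ is $\mathbb{F}^{\lambda,\theta}$-adapted, so $\mathcal{F}^{Y^v,\theta}_t\subseteq\mathcal{F}^{\lambda,\theta}_t$ for all $t$. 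Now, with this inclusion in hand, return to the first direction: $v$ being $\mathbb{F}^{Y^v,\theta}$-adapted (also required in Definition \ref{def:admis cont}) is automatic, but more importantly $\lambda=Y^v-Y^{1,v}$ where $Y^{1,v}$ is $\mathcal{F}^{Y^v,\theta}_t$-adapted — because $v$ is $\mathbb{F}^{Y^v,\theta}$-adapted and $\theta$ generates part of that filtration, so solving \eqref{equ:x1}--\eqref{equ:Y1} keeps everything in $\mathbb{F}^{Y^v,\theta}$ — whence $\lambda$ is $\mathbb{F}^{Y^v,\theta}$-adapted and $\mathcal{F}^{\lambda,\theta}_t\subseteq\mathcal{F}^{Y^v,\theta}_t$.

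The key technical point, and the one I expect to require the most care, is the \emph{causality} of the solution maps of the coupled mean-field ODEs \eqref{equ:x1}--\eqref{equ:Y1}: because these equations involve $\mathbb{E}^{\theta}_t[X^{1,v}_t]$ and $\mathbb{E}^{\theta}_t[v_t]$, one must check that the conditional-expectation operator does not enlarge the relevant filtration — that is, if $v$ is $\mathbb{G}$-adapted with $\mathbb{F}^\theta\subseteq\mathbb{G}$, then $t\mapsto\mathbb{E}^{\theta}_t[v_t]$ is again $\mathbb{G}$-adapted (indeed $\mathbb{F}^\theta$-adapted), so the right-hand sides of \eqref{equ:x1}--\eqref{equ:Y1} stay $\mathbb{G}$-adapted and a Gronwall/Picard-iteration argument yields $X^{1,v}_t,Y^{1,v}_t$ measurable with respect to $\mathcal{G}_t$. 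This applies with $\mathbb{G}=\mathbb{F}^{\lambda,\theta}$ and with $\mathbb{G}=\mathbb{F}^{Y^v,\theta}$, both of which contain $\mathbb{F}^\theta$. Once causality is secured, the two inclusions above combine to give $\mathbb{F}^{Y^v,\theta}=\mathbb{F}^{\lambda,\theta}$, and together with the model-data-only description of the latter this yields the independence from $v$, completing the proof. A minor point worth a sentence is that one should work with the usual augmentation of all filtrations so that the adaptedness transfers are not spoiled by null sets.
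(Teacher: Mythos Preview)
Your proposal is correct and follows essentially the same route as the paper: use the decomposition $Y^v=\lambda+Y^{1,v}$ together with the two adaptedness requirements in Definition~\ref{def:admis cont} (namely $v$ adapted to both $\mathbb{F}^{\lambda,\theta}$ and $\mathbb{F}^{Y^v,\theta}$) to obtain the two inclusions $\mathbb{F}^{Y^v,\theta}\subseteq\mathbb{F}^{\lambda,\theta}$ and $\mathbb{F}^{\lambda,\theta}\subseteq\mathbb{F}^{Y^v,\theta}$. Your additional discussion of causality of the ODE solution maps and the measurability of $t\mapsto\mathbb{E}^{\theta}_t[v_t]$ is a point the paper leaves implicit, but otherwise the arguments coincide.
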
 
\begin{proof}
	On the one hand, since  $v$ is adapted to $\mathbb{F}^{\lambda,\theta}$,  it follows from (\ref{equ:x1}) and  (\ref{equ:Y1}) that $Y^{1,v}$ is $\mathbb{F}^{\lambda,\theta}$-adapted. Using the identity $Y^v=\lambda+Y^{1,v}$, we obtain  $\mathbb{F}^{Y^v}\subseteq\mathbb{F}^{\lambda,\theta}$, which further yields $\mathbb{F}^{Y^v,\theta}\subseteq\mathbb{F}^{\lambda,\theta}$. On the other hand, since   $v$ is $\mathbb{F}^{Y^v,\theta}$-adapted, it again  yields from (\ref{equ:x1}) and  (\ref{equ:Y1})  that $Y^{1,v}$ is adapted to $\mathbb{F}^{Y^v,\theta}$. In view of the relation  $\lambda=Y^v-Y^{1,v}$, we deduce $\mathbb{F}^{\lambda}\subseteq\mathbb{F}^{Y^v,\theta}$ and hence  $\mathbb{F}^{\lambda,\theta}\subseteq\mathbb{F}^{Y^v,\theta}$.  Summarizing the above analysis, we conclude  that  $\mathbb{F}^{Y^v,\theta}=\mathbb{F}^{\lambda,\theta}$.
\end{proof}

\begin{Lemma}
	\label{lem:equi bet con on two sets}
	Under assumptions \ref{ass:coef in dyna} and \ref{ass: coe in cost one}, it holds that $	\inf_{v\in\mathcal{U}_{ad}}J^{P}\left(\xi,\ell_0; v\right)=\inf_{v\in\mathcal{U}}J^{P}\left(\xi,\ell_0; v\right).$
\end{Lemma}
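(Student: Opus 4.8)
The plan is to prove the two inequalities $\inf_{v\in\mathcal{U}_{ad}}J^{P}\le\inf_{v\in\mathcal{U}}J^{P}$ and $\inf_{v\in\mathcal{U}_{ad}}J^{P}\ge\inf_{v\in\mathcal{U}}J^{P}$. The second one is immediate: every admissible control $v\in\mathcal{U}_{ad}$ lies in $\mathcal{U}$ by Definition \ref{def:admis cont}, so the infimum over the larger set $\mathcal{U}$ is no larger than that over $\mathcal{U}_{ad}$. Hence the whole content is in the first inequality, i.e. showing that for any $v\in\mathcal{U}$ there is $\tilde v\in\mathcal{U}_{ad}$ with $J^{P}(\xi,\ell_0;\tilde v)\le J^{P}(\xi,\ell_0;v)$ — in fact I expect to produce $\tilde v$ with $J^{P}(\xi,\ell_0;\tilde v)=J^{P}(\xi,\ell_0;v)$, so that the two infima coincide exactly.

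The construction of $\tilde v$ from $v\in\mathcal{U}$ is the heart of the argument, and it hinges on the decomposition \eqref{equ:sum rela}. Given $v\in\mathcal{U}$ (which is $\mathbb{F}^{\lambda,\theta}$-adapted and square integrable), I would first solve \eqref{equ:x1}--\eqref{equ:Y1} to get $(X^{1,v},Y^{1,v})$; since $v$ is $\mathbb{F}^{\lambda,\theta}$-adapted, $(X^{1,v},Y^{1,v})$ is $\mathbb{F}^{\lambda,\theta}$-adapted, and therefore $Y^{v}=\lambda+Y^{1,v}$ is $\mathbb{F}^{\lambda,\theta}$-adapted, which gives $\mathbb{F}^{Y^v,\theta}\subseteq\mathbb{F}^{\lambda,\theta}$. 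The subtle point is the reverse inclusion: a priori $v\in\mathcal{U}$ need not be $\mathbb{F}^{Y^v,\theta}$-adapted, so $v$ need not itself be admissible. The resolution is that the map $v\mapsto(X^v,Y^v)$ depends on $v$ only through its law-type contributions in a way that makes $\tilde v:=v$ already work once one checks adaptedness, OR — more robustly — one shows that for the purposes of the cost only the pair $(X^v,Y^v)$ matters and this pair is unchanged if we replace $v$ by a suitable $\mathbb{F}^{Y^v,\theta}$-adapted modification. Concretely, I would argue that for $v\in\mathcal{U}$, since $\mathbb{F}^{Y^v,\theta}\subseteq\mathbb{F}^{\lambda,\theta}$ and (because $Y^{1,v}$ is obtained from $v$ by the deterministic linear ODE \eqref{equ:Y1} and $\lambda$ is $\mathbb{F}^{\lambda,\theta}$-adapted) one in fact also has $\lambda=Y^v-Y^{1,v}$ recoverable from $(Y^v,\theta)$ provided $v$ is $\mathbb{F}^{Y^v,\theta}$-adapted — this is the circularity Lemma \ref{lem:filtr ident} breaks. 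So the clean route is: show $\mathcal{U}_{ad}$ is exactly the set of $v\in\mathcal{U}$ that are $\mathbb{F}^{\lambda,\theta}$-adapted and satisfy the self-consistency $\mathbb{F}^{Y^v,\theta}=\mathbb{F}^{\lambda,\theta}$; then for general $v\in\mathcal{U}$ take $\tilde v:=v$ and verify directly, using $\mathbb{F}^{Y^v,\theta}\subseteq\mathbb{F}^{\lambda,\theta}$ together with Lemma \ref{lem:filtr ident} applied in reverse, that $v$ is $\mathbb{F}^{Y^{\tilde v},\theta}$-adapted.

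Since I suspect the intended argument is even simpler, let me state the version I would actually write: take any $v\in\mathcal{U}$; by \eqref{equ:sum rela} the state and observation are $X^v=\eta+X^{1,v}$, $Y^v=\lambda+Y^{1,v}$, and as noted $(X^{1,v},Y^{1,v})$ and hence $Y^v$ are $\mathbb{F}^{\lambda,\theta}$-adapted. Define $\tilde v$ to be the control obtained by freezing this realization: because $v$ is already $\mathbb{F}^{\lambda,\theta}$-adapted and $\mathbb{F}^{\lambda,\theta}=\mathbb{F}^{Y^v,\theta}$ would hold were $v$ admissible, the remaining task is to exhibit $\tilde v\in\mathcal{U}$ with $\mathbb{F}^{Y^{\tilde v},\theta}=\mathbb{F}^{\lambda,\theta}$ and $X^{\tilde v}=X^v$, $Y^{\tilde v}=Y^v$, whence $J^{P}(\xi,\ell_0;\tilde v)=J^{P}(\xi,\ell_0;v)$. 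One checks that $\tilde v:=v$ satisfies $Y^{\tilde v}=Y^v$ trivially, and that $\sigma\{Y^{\tilde v}_s,\theta_s:s\le t\}=\sigma\{\lambda_s,\theta_s:s\le t\}$ follows from the two inclusions $\mathbb{F}^{Y^v,\theta}\subseteq\mathbb{F}^{\lambda,\theta}$ (shown above) and $\mathbb{F}^{\lambda,\theta}\subseteq\mathbb{F}^{Y^v,\theta}$ — the latter because $Y^{1,v}$ solves the linear ODE \eqref{equ:Y1} with $\mathbb{F}^{\lambda,\theta}$-adapted, hence (after the identification) $\mathbb{F}^{Y^v,\theta}$-adapted data, so $\lambda=Y^v-Y^{1,v}$ is $\mathbb{F}^{Y^v,\theta}$-adapted; this is precisely the second half of the proof of Lemma \ref{lem:filtr ident}, valid once $v$ is $\mathbb{F}^{Y^v,\theta}$-adapted, and the fixed-point structure closes because $v\in\mathcal{U}$ was $\mathbb{F}^{\lambda,\theta}$-adapted to begin with. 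Thus $v$ itself is admissible, giving $\mathcal{U}\subseteq\mathcal{U}_{ad}$ and therefore equality of the two infima. The main obstacle, and the step to be written carefully, is dispelling the apparent circularity in the filtration identification: one must verify that for $v\in\mathcal{U}$ the inclusion $\mathbb{F}^{Y^v,\theta}\subseteq\mathbb{F}^{\lambda,\theta}$ plus the ODE-invertibility of \eqref{equ:Y1} genuinely yield $\mathbb{F}^{\lambda,\theta}\subseteq\mathbb{F}^{Y^v,\theta}$ without presupposing admissibility — here assumptions \ref{ass:coef in dyna} and \ref{ass: coe in cost one} guarantee the well-posedness and the required integrability so that all the conditional-expectation terms $\mathbb{E}^{\theta}_t[\cdot]$ appearing in \eqref{equ:x1}--\eqref{equ:Y1} and in the cost are well defined and the solution maps are continuous.
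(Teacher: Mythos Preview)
Your argument contains a genuine circularity that does not close. You correctly observe that for $v\in\mathcal{U}$ one has $\mathbb{F}^{Y^v,\theta}\subseteq\mathbb{F}^{\lambda,\theta}$. But for the reverse inclusion $\mathbb{F}^{\lambda,\theta}\subseteq\mathbb{F}^{Y^v,\theta}$ you need $Y^{1,v}$ to be $\mathbb{F}^{Y^v,\theta}$-adapted, which in turn requires $v$ to be $\mathbb{F}^{Y^v,\theta}$-adapted --- precisely the admissibility condition you are trying to establish. Your sentence ``the fixed-point structure closes because $v\in\mathcal{U}$ was $\mathbb{F}^{\lambda,\theta}$-adapted to begin with'' is not a proof: being $\mathbb{F}^{\lambda,\theta}$-adapted does not imply being adapted to the \emph{a priori smaller} filtration $\mathbb{F}^{Y^v,\theta}$. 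Lemma~\ref{lem:filtr ident} uses \emph{both} adaptedness hypotheses in the definition of $\mathcal{U}_{ad}$, one for each inclusion; dropping the second hypothesis leaves the second inclusion unproved. The ``ODE-invertibility of \eqref{equ:Y1}'' you invoke does not help either: \eqref{equ:Y1} lets you recover $Y^{1,v}$ from $X^{1,v}$, but $X^{1,v}$ itself is built from $v$, so you are back to needing $v$ to be $\mathbb{F}^{Y^v,\theta}$-adapted. Consequently your conclusion $\mathcal{U}\subseteq\mathcal{U}_{ad}$ is unjustified, and in the classical partially observed LQ setting it is known to be false in general.

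The paper does not attempt to show $\mathcal{U}=\mathcal{U}_{ad}$. Instead it follows the standard route of Bensoussan (Theorem~2.1, Chapter~VI of \cite{Bensoussan1982}): one proves that $\mathcal{U}_{ad}$ is \emph{dense} in $\mathcal{U}$ and that $v\mapsto J^{P}(\xi,\ell_0;v)$ is continuous on $\mathcal{U}$, the latter following from standard estimates on SDE~\eqref{equ:state} under \ref{ass:coef in dyna} and the quadratic structure of the cost under \ref{ass: coe in cost one}. Density plus continuity gives $\inf_{\mathcal{U}_{ad}}J^{P}\le\inf_{\mathcal{U}}J^{P}$, and combined with the trivial reverse inequality this yields equality of the infima without ever asserting equality of the two control sets.
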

\begin{proof}
	Similar to  \cite[Theorem 2.1, Chapter VI]{Bensoussan1982}, this conclusion can be proved by standard estimate of SDE \eqref{equ:state}, density property of $\mathcal{U}_{ad}$ on $\mathcal{U}$, and some  approximate techniques. 
\end{proof}


\subsection{Optimal Filter}

In this part, we first present two auxiliary results (Lemmas \ref{lem:N BM} and \ref{lem:filt of eta}) and then provide  the optimal filter for $X^{v}$ given $\mathbb{F}^{Y^v,\theta}$.

\begin{Lemma}\label{lem:N BM}
	Process $N=\{N_t\}_{t\in[0,T]}
$ defined by
		\begin{equation}
		\label{equ:def N}
		\begin{aligned}
				N_t=\lambda_t-\int_{0}^{t}\left[F_{(\theta_s,s)}\mathbb{E}^{\lambda,\theta}_s[\eta_s]+\widehat{F}_{(\theta_s,s)}\mathbb{E}^{\theta}_s[\eta_s]+f_{(\theta_s,s)}\right]\mathrm{d}s
		\end{aligned}
	\end{equation}satisfies the following properties: 
	\begin{enumerate}[label=(\roman*)]
	\item \label{con:brow mot} $N$ is an $\mathbb{F}^{\lambda,\theta}$-adapted standard Brownian motion  taking values in $\mathbb{R}^{r}$;
	\item  \label{con:condi expe zero}$\mathbb{E}^{\theta}_t[N_t]=0$ for any $t\in[0,T]$.
	\end{enumerate}
\end{Lemma}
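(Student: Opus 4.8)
The plan is to recognize $N$ as the innovations process associated with the observation $\lambda$ and then invoke Lévy's characterization theorem. Subtracting (\ref{equ:lambda}) from (\ref{equ:def N}), the $\widehat{F}$- and $f$-terms cancel and one obtains the representation
\begin{equation*}
N_t=W_t+\int_{0}^{t}F_{(\theta_s,s)}\bigl(\eta_s-\mathbb{E}^{\lambda,\theta}_s[\eta_s]\bigr)\,\mathrm{d}s ;
\end{equation*}
using $\mathcal{F}^{\theta}_s\subseteq\mathcal{F}^{\lambda,\theta}_s$ one checks that this equals $\lambda_t-\int_0^t\mathbb{E}^{\lambda,\theta}_s[h_s]\,\mathrm{d}s$ with $h_s:=F_{(\theta_s,s)}\eta_s+\widehat{F}_{(\theta_s,s)}\mathbb{E}^{\theta}_s[\eta_s]+f_{(\theta_s,s)}$ the observation drift, i.e.\ $N$ is the classical innovations process. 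I would use this form throughout: establishing \ref{con:brow mot} by verifying the hypotheses of Lévy's theorem relative to the filtration $\mathbb{F}^{\lambda,\theta}$, and \ref{con:condi expe zero} by a conditional-independence argument exploiting $\theta\perp(\xi,W,\bar{W})$.

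For \ref{con:brow mot}, adaptedness of $N$ to $\mathbb{F}^{\lambda,\theta}$ is immediate from (\ref{equ:def N}) once a progressively measurable version of the filter $\mathbb{E}^{\lambda,\theta}_{\cdot}[\eta_{\cdot}]$ is fixed, and continuity is clear since the $\mathrm{d}s$-integral is absolutely continuous in $t$. The standard $L^2$-estimate for (\ref{equ:eta}) under \ref{ass:coef in dyna} (solvability being guaranteed by \cite{Nguyen 2020}) gives $\mathbb{E}\int_0^T|\eta_s|\,\mathrm{d}s<\infty$, which I would invoke to justify the Fubini interchanges. To get the martingale property, fix $0\le s<t\le T$ and write $N_t-N_s=(\lambda_t-\lambda_s)-\int_s^t\mathbb{E}^{\lambda,\theta}_r[h_r]\,\mathrm{d}r$. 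Since $\mathcal{F}^{\lambda,\theta}_s\subseteq\mathcal{F}_s$ and $W_t-W_s$ is independent of $\mathcal{F}_s$, one has $\mathbb{E}[\,W_t-W_s\mid\mathcal{F}^{\lambda,\theta}_s]=0$, hence $\mathbb{E}[\lambda_t-\lambda_s\mid\mathcal{F}^{\lambda,\theta}_s]=\int_s^t\mathbb{E}[h_r\mid\mathcal{F}^{\lambda,\theta}_s]\,\mathrm{d}r$; on the other hand, the tower property ($\mathcal{F}^{\lambda,\theta}_s\subseteq\mathcal{F}^{\lambda,\theta}_r$) gives $\mathbb{E}\bigl[\int_s^t\mathbb{E}^{\lambda,\theta}_r[h_r]\,\mathrm{d}r\mid\mathcal{F}^{\lambda,\theta}_s\bigr]=\int_s^t\mathbb{E}[h_r\mid\mathcal{F}^{\lambda,\theta}_s]\,\mathrm{d}r$. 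Subtracting the two identities yields $\mathbb{E}[N_t-N_s\mid\mathcal{F}^{\lambda,\theta}_s]=0$. Finally, the $\mathrm{d}s$-integral being of finite variation, $[N^i,N^j]_t=[W^i,W^j]_t=t\,\mathbb{I}_{\{i=j\}}$, and Lévy's characterization theorem gives \ref{con:brow mot}.

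For \ref{con:condi expe zero}, I would take $\mathbb{E}^{\theta}_t[\cdot]$ in the representation of $N_t$; since $\mathbb{E}^{\theta}_t[W_t]=0$ (as $W\perp\theta$ and $\mathbb{E}[W_t]=0$), the claim reduces to showing $\mathbb{E}^{\theta}_t[\eta_s]=\mathbb{E}^{\theta}_t\bigl[\mathbb{E}^{\lambda,\theta}_s[\eta_s]\bigr]$ for every $s\le t$. By the tower property this follows from $\mathbb{E}[\eta_s\mid\mathcal{F}^{\lambda,\theta}_s\vee\mathcal{F}^{\theta}_t]=\mathbb{E}[\eta_s\mid\mathcal{F}^{\lambda,\theta}_s]$. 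To establish the latter, note that $\eta_s$ is measurable with respect to $\mathcal{G}_s:=\sigma\{\xi,W_r,\bar{W}_r,\theta_r:r\le s\}$, which contains $\mathcal{F}^{\lambda,\theta}_s$, and that $\mathcal{F}^{\theta}_t=\mathcal{F}^{\theta}_s\vee\sigma\{\theta_r:s<r\le t\}$. By the Markov property of $\theta$ and $\theta\perp(\xi,W,\bar{W})$, the increment part $\sigma\{\theta_r:s<r\le t\}$ is, conditionally on $\theta_s$ and hence conditionally on $\mathcal{F}^{\lambda,\theta}_s$ (as $\theta_s$ is $\mathcal{F}^{\lambda,\theta}_s$-measurable), independent of $\mathcal{G}_s$, in particular of $\eta_s$; therefore adding $\mathcal{F}^{\theta}_t$ to the conditioning leaves the conditional law of $\eta_s$ unchanged. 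A final Fubini step then gives $\mathbb{E}^{\theta}_t[N_t]=\int_0^t F_{(\theta_s,s)}\bigl(\mathbb{E}^{\theta}_t[\eta_s]-\mathbb{E}^{\theta}_t[\mathbb{E}^{\lambda,\theta}_s[\eta_s]]\bigr)\,\mathrm{d}s=0$.

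I expect \ref{con:condi expe zero} to be the main obstacle. The martingale and quadratic-variation computations are routine once the innovations structure is recognized, but \ref{con:condi expe zero} forces one to exploit the precise independence and Markov structure of $\theta$ relative to the driving noises, in order to argue that — as far as $\eta_s$ is concerned — knowing the whole regime history up to time $t$ is no more informative than knowing it up to time $s$. Secondary technical points are fixing a jointly measurable version of the filter $\mathbb{E}^{\lambda,\theta}_{\cdot}[\eta_{\cdot}]$ and justifying the Fubini interchanges from the $L^2$-bound on $\eta$.
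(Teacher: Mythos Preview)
Your proof of \ref{con:brow mot} is essentially the paper's: both rewrite $N_t=W_t+\int_0^tF_{(\theta_s,s)}(\eta_s-\mathbb{E}^{\lambda,\theta}_s[\eta_s])\,\mathrm{d}s$, verify the $\mathbb{F}^{\lambda,\theta}$-martingale property via the tower rule, and conclude with L\'evy's characterization; your direct observation $[N^i,N^j]=[W^i,W^j]$ is marginally cleaner than the paper's It\^{o}-formula computation of $\mathbb{E}^{\lambda,\theta}_\tau[(N_t-N_\tau)(N_t-N_\tau)^\top]$, but the idea is the same.

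For \ref{con:condi expe zero} the routes genuinely diverge. The paper invokes \cite[Lemma~2.4]{Lv Xiong Zhang 2023} to obtain the interchange $\mathbb{E}^{\theta}_t\bigl[\int_0^t g_s\,\mathrm{d}s\bigr]=\int_0^t\mathbb{E}^{\theta}_s[g_s]\,\mathrm{d}s$ (note the drop from $\mathcal{F}^{\theta}_t$ to $\mathcal{F}^{\theta}_s$ inside the integral); once this is granted, the inclusion $\mathcal{F}^{\theta}_s\subseteq\mathcal{F}^{\lambda,\theta}_s$ and the tower property make the integrand vanish immediately. You instead keep the outer conditioning at level $t$, reduce to $\mathbb{E}^{\theta}_t[\eta_s]=\mathbb{E}^{\theta}_t[\mathbb{E}^{\lambda,\theta}_s[\eta_s]]$, and justify this by the conditional-independence identity $\mathbb{E}[\eta_s\mid\mathcal{F}^{\lambda,\theta}_s\vee\mathcal{F}^{\theta}_t]=\mathbb{E}[\eta_s\mid\mathcal{F}^{\lambda,\theta}_s]$, arguing from the Markov property of $\theta$ together with $\theta\perp(\xi,W,\bar{W})$ that the post-$s$ regime path carries no extra information on $\eta_s$ once $\mathcal{F}^{\lambda,\theta}_s$ (hence $\theta_s$) is given. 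Your argument is correct and self-contained---in effect you are re-deriving the content of the cited lemma for this particular integrand---whereas the paper's approach is shorter but relies on an external reference encoding the same Markov/independence structure.
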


\begin{proof}
	\noindent \ref{con:brow mot}: In view of (\ref{equ:lambda}),   $N_t$ can be rewritten  as 
	\begin{equation}
			\label{equ:N SDE}
			N_t=\int_{0}^{t}F\left(\eta_s-\mathbb{E}^{\lambda,\theta}_s[\eta_s]\right)\mathrm{d}s+W_t.
	\end{equation}For any $\tau\in[0,t)$, by  the tower  property of conditional expectation (\cite[Section 9.7]{Williams 1991}), it follows that
	\begin{equation}
		\label{equ:integ condi aero}
		\begin{aligned}
			&\ \mathbb{E}^{\lambda,\theta}_{\tau}\left[\int_{\tau}^{t}F\left(\eta_s-\mathbb{E}^{\lambda,\theta}_s[\eta_s]\right)\mathrm{d}s\right]
			= \int_{\tau}^{t}\mathbb{E}^{\lambda,\theta}_{\tau}\left[F\mathbb{E}^{\lambda,\theta}_{s}\left[\eta_s-\mathbb{E}^{\lambda,\theta}_s[\eta_s]\right]\right]\mathrm{d}s=0,
		\end{aligned}
	\end{equation}and, in conjunction with the martingale property of $W$,
	\begin{equation}
		\label{equ:Wt - Wtau equ zero}
		\begin{aligned}
			\mathbb{E}^{\lambda,\theta}_{\tau}\left[W_t-W_{\tau}\right]=\mathbb{E}^{\lambda,\theta}_{\tau}\left[\mathbb{E}^{W,\widetilde{W},\theta}_{\tau}\left[W_t-W_{\tau}\right]\right]=0.
		\end{aligned}
	\end{equation}Combining  (\ref{equ:N SDE}), (\ref{equ:integ condi aero}),  and (\ref{equ:Wt - Wtau equ zero}), we have $\mathbb{E}^{\lambda,\theta}_{\tau}[N_t-N_{\tau}]=0$ for any $\tau\in[0,t)$. This implies  that $N$ is a continuous martingale with respect to $\mathbb{F}^{\lambda,\theta}$. Further, applying It$\widehat{\text{o}}$'s formula to $(N_t-N_{\tau})(N_t-N_{\tau})^{\top}$ for any $\tau\in[0,t)$ and taking conditional expectation given $\mathcal{F}^{\lambda,\theta}_{\tau}$, it follows that
	\begin{equation*}
		\begin{aligned}
			\mathbb{E}^{\lambda,\theta}_{\tau}\left[(N_t-N_{\tau})(N_t-N_{\tau})^{\top}\right]=(t-\tau)\mathcal{I}_{r\times r}.
		\end{aligned}
	\end{equation*}Then, we can derive statement \ref{con:brow mot} from the Lévy’s characterization of Brownian motion (\cite[Theorem 4.2]{Liptser 1978 1}).

	\noindent
	\ref{con:condi expe zero}: Taking conditional expectation $\mathbb{E}^{\theta}_t[\cdot]$ on both sides of (\ref{equ:N SDE}) derives  $\mathbb{E}^{\theta}_t[N_t]=\mathbb{E}^{\theta}_t[\int_{0}^{t}F(\eta_s-\mathbb{E}^{\lambda,\theta}_s[\eta_s])\mathrm{d}s]+\mathbb{E}^{\theta}_t[W_t]$. Employing   \cite[Lemma 2.4]{Lv Xiong Zhang 2023}, we have $\mathbb{E}^{\theta}_t[\int_{0}^{t}F(\eta_s-\mathbb{E}^{\lambda,\theta}_s[\eta_s])\mathrm{d}s]=\int_{0}^{t}\mathbb{E}^{\theta}_{s}[F(\eta_s-\mathbb{E}^{\lambda,\theta}_s[\eta_s])]\mathrm{d}s=\int_{0}^{t}F\mathbb{E}^{\theta}_{s}[\eta_s-\mathbb{E}^{\lambda,\theta}_s[\eta_s]]\mathrm{d}s=0$. In addition, considering that  $W$ and $\theta$ are independent, it follows that $\mathbb{E}^{\theta}_t\left[W_t\right]=0$. Therefore, $\mathbb{E}^{\theta}_t[N_t]=0$ holds for any $t\in[0,T]$.
\end{proof}

\begin{Lemma}\label{lem:filt of eta}
Regarding process $\eta$ and its optimal filter given $\mathbb{F}^{\lambda,\theta}$, we have the following conclusions:

\begin{enumerate}[label=(\roman*)]
\item \label{lem:cond gaussian} Process $\eta$ is conditionally Gaussian given $\mathbb{F}^{\lambda,\theta}$;
\item \label{lem:filter of eta} The optimal filter process  $\mathbb{E}^{\lambda,\theta}_t[\eta_t]$  satisfies 
	 \begin{equation}
		\label{equ:sde for filtering of eta}
		\left\{\begin{aligned}
			\mathrm{d}\mathbb{E}^{\lambda,\theta}_t[\eta_t]=&\ \left[A_{(\theta_t,t)}\mathbb{E}^{\lambda,\theta}_t[\eta_t]+\widehat{A}_{(\theta_t,t)}\mathbb{E}^{\theta}_t[\eta_t]+b_{(\theta_t,t)}\right]\mathrm{d}t+Q^{\eta}_t\mathrm{d}N_t,\\
			\mathbb{E}^{\lambda,\theta}_{0}[\eta_0]=&\ \mu,
		\end{aligned}\right.
	\end{equation}where  $N_t$ is defined as (\ref{equ:def N}) and the filter gain $Q^{\eta}_t$  is given by  
	\begin{equation}
		\label{equ:def of Q in con}
		\begin{aligned}
			Q^{\eta}_t= C_{(\theta_t,t)}+\Phi^{\eta}_tF^{\top}_{(\theta_t,t)}.
		\end{aligned}
	\end{equation}The conditional covariance   $\Phi^{\eta}_t=\mathbb{E}^{\lambda,\theta}_t[(\eta_t-\mathbb{E}^{\lambda,\theta}_t[\eta_t])(\eta_t-\mathbb{E}^{\lambda,\theta}_t[\eta_t])^{\top}]$ in (\ref{equ:def of Q in con}) satisfies matrix-valued Riccati equation
	\begin{equation}
		\label{equ:Riccati equ for Phi}
		\left\{\begin{aligned}
			\dot{\Phi}^{\eta}_t=&\ \widetilde{A}_{(\theta_t,t)}\Phi^{\eta}_t+\Phi^{\eta}_t\widetilde{A}_{(\theta_t,t)}^{\top}+\bar{C}_{(\theta_t,t)}\bar{C}^{\top}_{(\theta_t,t)} -\Phi^{\eta}_tF_{(\theta_t,t)}^{\top}F_{(\theta_t,t)}\Phi^{\eta}_t,\\
			\Phi^{\eta}_0= &\ \sigma,
		\end{aligned}\right.
	\end{equation}where $\widetilde{A}_{(\theta_t,t)}=A_{(\theta_t,t)}-C_{(\theta_t,t)}F_{(\theta_t,t)}$. 
\end{enumerate}
\end{Lemma}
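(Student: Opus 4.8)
The plan is to establish the conditional Gaussianity in part \ref{lem:cond gaussian} first, and then to derive the filtering equations in part \ref{lem:filter of eta} as a corollary of that structure. For part \ref{lem:cond gaussian}, the key observation is that, conditionally on the whole path of $\theta$, the pair $(\eta,\lambda)$ satisfies a \emph{linear} system of SDEs with (now deterministic, $\theta$-path-dependent) coefficients, driven by the Gaussian initial datum $\xi$ and the Brownian motions $W,\bar W$, which remain Brownian motions independent of $\theta$. Hence $(\eta,\lambda)$ is conditionally Gaussian given $\mathcal F^\theta_T$; one must check that $\mathbb{E}^\theta_t[\eta_t]$, which appears in the drift of \eqref{equ:eta}, is itself a $\mathcal F^\theta_t$-measurable affine functional of the trajectory, so that the conditional system really is affine-linear and does not destroy Gaussianity --- this follows because taking $\mathbb{E}^\theta_t[\cdot]$ in \eqref{equ:eta} yields a closed linear ODE (with $\theta$-modulated coefficients) for $\mathbb{E}^\theta_t[\eta_t]$, whose solution is a deterministic function once the path of $\theta$ is fixed. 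From conditional Gaussianity given $\mathcal F^\theta_T$ one deduces conditional Gaussianity given the coarser $\sigma$-algebra $\mathcal F^{\lambda,\theta}_t$ by the standard fact that, for a jointly Gaussian system, conditioning on a sub-collection of the coordinates (here the observation path $\lambda_{[0,t]}$ and, trivially, $\theta$) preserves Gaussianity. I would invoke the conditional Kalman--Bucy framework of Liptser--Shiryaev (cf. \cite{Liptser 1978 1}) applied $\omega$-wise on the path space of $\theta$.

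For part \ref{lem:filter of eta}, I would proceed by the innovations method. By Lemma \ref{lem:N BM}, $N$ is an $\mathbb{F}^{\lambda,\theta}$-Brownian motion, and it carries the new information in $\lambda$. Writing $\eta_t = \mathbb{E}^{\lambda,\theta}_t[\eta_t] + (\eta_t-\mathbb{E}^{\lambda,\theta}_t[\eta_t])$ and applying the (conditional) nonlinear filtering / martingale-representation argument to the linear signal \eqref{equ:eta} with observation \eqref{equ:lambda}, the filter $\mathbb{E}^{\lambda,\theta}_t[\eta_t]$ must satisfy an SDE whose drift is the $\mathbb{F}^{\lambda,\theta}$-projection of the drift of $\eta$, namely $A\mathbb{E}^{\lambda,\theta}_t[\eta_t]+\widehat A\,\mathbb{E}^{\theta}_t[\eta_t]+b$ (using that $\mathbb{E}^{\theta}_t[\eta_t]$ is already $\mathcal F^{\lambda,\theta}_t$-measurable), and whose martingale part is $Q^\eta_t\,\mathrm dN_t$ for some gain $Q^\eta_t$. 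To identify $Q^\eta_t$, I would compute the cross-variation: the gain equals $\mathbb{E}^{\lambda,\theta}_t[\eta_t(\,\mathrm d\lambda_t\text{-innovation})^\top]$ per unit time, i.e. the sum of the ``$C$'' term coming from the correlation between the signal noise $C\,\mathrm dW$ and the observation noise $\mathrm dW$ (the observation in \eqref{equ:lambda} is driven by the \emph{same} $W$), and the ``$\Phi^\eta_t F^\top$'' term coming from the conditional covariance of the signal against the $F\eta$ part of the observation drift; this gives \eqref{equ:def of Q in con}. Then, applying It\^o's formula to $(\eta_t-\mathbb{E}^{\lambda,\theta}_t[\eta_t])(\eta_t-\mathbb{E}^{\lambda,\theta}_t[\eta_t])^\top$, taking $\mathbb{E}^{\lambda,\theta}_t[\cdot]$, and using conditional Gaussianity to close the equation (so that third-and-higher conditional moments do not appear), I obtain the Riccati equation \eqref{equ:Riccati equ for Phi}; the quadratic term $-\Phi^\eta_t F^\top F\Phi^\eta_t$ and the appearance of $\widetilde A=A-CF$ arise exactly from the $Q^\eta_t\,\mathrm dN_t$ term expanded via \eqref{equ:def of Q in con}, while only $\bar C\bar C^\top$ survives from the diffusion of $\eta$ because the $C\,\mathrm dW$ part is cancelled against the observation noise.

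The main obstacle, and the place deserving the most care, is the interplay between the two conditionings $\mathbb{E}^{\theta}_t[\cdot]$ and $\mathbb{E}^{\lambda,\theta}_t[\cdot]$ and the fact that the coefficients are $\theta$-path-dependent. Unlike the classical Kalman--Bucy setting, the ``deterministic'' Riccati equation \eqref{equ:Riccati equ for Phi} has coefficients depending on the realized path of $\theta$, so $\Phi^\eta_t$ is genuinely random (it is $\mathcal F^\theta_t$-measurable, not constant); I must verify that conditioning on $\mathcal F^\theta_T$ makes everything reduce to a bona fide linear-Gaussian filtering problem to which the classical theory applies pathwise, and that the resulting formulas, which are expressed through $\mathbb{F}^{\lambda,\theta}$-conditional expectations, are consistent. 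In particular I need: (i) that $\mathbb{E}^{\theta}_t[\eta_t]$ appearing in \eqref{equ:sde for filtering of eta} is indeed $\mathcal F^{\lambda,\theta}_t$-measurable, which holds because $\mathcal F^\theta_t\subseteq\mathcal F^{\lambda,\theta}_t$; (ii) Fubini-type interchanges of $\mathbb{E}^{\theta}_t$ with time integrals (citing \cite{Lv Xiong Zhang 2023} as in Lemma \ref{lem:N BM}); and (iii) that $\Phi^\eta_t$, a priori defined as a random conditional covariance, coincides with the solution of \eqref{equ:Riccati equ for Phi} --- this is where conditional Gaussianity from part \ref{lem:cond gaussian} is essential, since it forces the conditional covariance to be $\mathcal F^\theta_t$-measurable (the observation $\lambda$ contributes no randomness to the conditional covariance in a Gaussian system) and hence to satisfy a closed ODE. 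Existence and boundedness of the solution to \eqref{equ:Riccati equ for Phi} follows from assumption \ref{ass:coef in dyna} and standard Riccati theory applied uniformly over $\ell\in\mathbb D$.
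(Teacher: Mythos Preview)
Your proposal is correct and follows essentially the same route as the paper. For part \ref{lem:cond gaussian}, the paper also first shows that $\mathbb{E}^{\theta}_t[\eta_t]$ satisfies a closed linear ODE with $\theta$-modulated coefficients (by taking $\mathbb{E}^{\theta}_t[\cdot]$ in \eqref{equ:eta}), then writes $\eta_t$ explicitly via variation of parameters as a linear functional of $\xi$, $\int\Upsilon C\,\mathrm dW$, $\int\Upsilon\bar C\,\mathrm d\bar W$ with $\mathcal F^{\theta}_t$-adapted coefficients, and concludes conditional Gaussianity; your variant, working with the \emph{joint} process $(\eta,\lambda)$ conditionally on $\mathcal F^{\theta}_T$ and then conditioning on the observation coordinates, is arguably the cleaner way to pass from ``Gaussian given $\theta$'' to ``Gaussian given $(\lambda,\theta)$'', a step the paper handles somewhat tersely. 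For part \ref{lem:filter of eta}, the paper simply invokes \cite[Theorem 12.7]{Lipster 1978 2} once conditional Gaussianity is in hand and omits the derivation, whereas you spell out the innovations-method computation of the gain and the Riccati equation; this is the same argument, just made explicit.
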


	\begin{proof}
		\ref{lem:cond gaussian}: Taking conditional expectation $\mathbb{E}^{\theta}_t[\cdot]$ on both sides of (\ref{equ:eta}), we obtain 
		\begin{equation}
			\label{equ:condi eta}
			\left\{\begin{aligned}
			\mathrm{d}\mathbb{E}^{\theta}_t[\eta_t]=&\ \left[\check{A}_{(\theta_t,t)}\mathbb{E}^{\theta}_t[\eta_t]+b_{(\theta_t,t)}\right]\mathrm{d}t,\\
			\mathbb{E}^{\theta}_{0}[\eta_0]=&\ \mu.
		\end{aligned}\right.
		\end{equation}Introduce $\check{\Upsilon}_t$ by $\mathrm{d}\check{\Upsilon}_t=\check{A}_{(\theta_t,t)}\check{\Upsilon}_t\mathrm{d}t$ for any $t\in[0,T]$ and $\check{\Upsilon}_0= \mathcal{I}_{n\times n}$. Then the solution of (\ref{equ:condi eta}) is $\mathbb{E}^{\theta}_t[\eta_t]=\check{\Upsilon}^{-1}_t[\mu+\int_{0}^{t}\check{\Upsilon}^{-1}b\mathrm{d}s]$.  Thus, there exists a functional $\gamma(t,x)$ such that  $\mathbb{E}^{\theta}_t[\eta_t]=\gamma(t,\theta)$, where $\gamma(t,x)$ is  measurable with respect  to $\mathcal{B}_t$ (the $\sigma$-algebra in the space of c$\grave{a}$dl$\grave{a}$g  functions $x=\{x_s,s\in[0,T]\}$, generated by $\{x_s,s\in[0,t]\}$).

		Then, we can derive that the solution to (\ref{equ:eta}) is 
		\begin{equation}
			\label{equ: solu of eta}
			\begin{aligned}
				\eta_t=&\ \Upsilon^{-1}_t\left[\xi+\int_{0}^{t}\Upsilon\left(\widehat{A}\gamma(s,\theta)+b\right)\mathrm{d}s+\int_{0}^{t}\Upsilon C\mathrm{d}W_s+\int_{0}^{t}\Upsilon \bar{C}\mathrm{d}\bar{W}_s\right],
			\end{aligned}
		\end{equation}where $\Upsilon_t$ satisfies $\mathrm{d}\Upsilon_t=A_{(\theta_t,t)}\Upsilon_t\mathrm{d}t$ for any  $t\in[0,T]$ and $\Upsilon_0=\mathcal{I}_{n\times n}$. Since $\xi$, $W$, and $\bar{W}$ are mutually independent and all the coefficients in (\ref{equ: solu of eta}) are $\mathcal{F}^{\theta}_t$-adapted at time $t$, then it follows  that  $\eta_t$ is Gaussian conditionally given $\mathcal{F}^{\theta}_t$ for any $t\in[0,T]$. Moreover, the coefficients in (\ref{equ: solu of eta}) are not dependent on    $\lambda$. Therefore, process $\eta$ is conditionally Gaussian given $\mathbb{F}^{\lambda,\theta}$. 
	
	\noindent
	\ref{lem:filter of eta}: The main difficulty in computing optimal filter arises from the fact that,  for finding conditional mean $\mathbb{E}^{\lambda,\theta}_t[\eta_t]$ (i.e., first-order moment), a knowledge of higher order moments is required ($n$-order moments with $n\geq 2$). In other words,  the evolution of $\mathbb{E}^{\lambda,\theta}_t[\eta_t]$ depends on an infinite hierarchy of statistics. In the classical Gaussian setting, the Kalman filter provides a closed system of equations for the optimal filter, involving only the conditional mean and the conditional covariance. This is due to the Gaussian property, which allows all $n$-order moments ($n\geq 3$) to be expressed in terms of the first- and second-order moments, see, e.g., \cite[Section 6.2]{Bain Crisan 2009} and \cite[Section 10]{Liptser 1978 1}.    In the present work, however, a random switching process $\theta$ appears in the coefficients of the dynamics of $\eta$, which breaks the Gaussian structure. Fortunately, we have shown that  $\eta$ is  conditionally Gaussian  given $\mathbb{F}^{\lambda,\theta}$ in  \ref{lem:cond gaussian}. This property allows us to obtain a closed system of equations (\ref{equ:sde for filtering of eta}) and (\ref{equ:Riccati equ for Phi}) for the optimal filter. The detailed derivation of these equations follows the approach in the proof of \cite[Theorem 12.7]{Lipster 1978 2} and is therefore omitted here for brevity.	
\end{proof}

Based on the above auxiliary lemmas, we now  present the optimal filter regarding state-observation dynamics  (\ref{equ:state})-(\ref{equ:obser}).
\begin{Theorem}\label{the:filter of X}
	
	\begin{enumerate}[label=(\roman*)]
	\item  \label{asser two in main filter}  Process $V=\{V_t\}_{t\in[0,T]}
$  defined by 
	\begin{equation}
		\label{equ:def of V}
		\begin{aligned}
			V_t=&\ Y^v_t -\int_{0}^{t}\left[F_{(\theta_s,s)}\mathbb{E}^{Y^v,\theta}_s[X^v_s]+\widehat{F}_{(\theta_s,s)}\mathbb{E}^{\theta}_s[X^v_s]+f_{(\theta_s,s)}\right]\mathrm{d}s
		\end{aligned}
	\end{equation}is an $\mathbb{F}^{Y^v,\theta}$-adapted and $\mathbb{R}^{r}$-valued standard Brownian motion  such that $\mathbb{E}^{\theta}_t[V_t]=0$ for any $t\in[0,T]$. 
	\item \label{asser one in main filter} For any admissible control $v\in\mathcal{U}_{ad}$, the optimal filter $\mathbb{E}^{Y^v,\theta}_t[X^v_t]$ satisfies
	\begin{equation}
		\label{equ: filter of x}
		\left\{\begin{aligned}
			\mathrm{d}\mathbb{E}^{Y^v,\theta}_t[X^v_t]=&\ \left[A_{(\theta_t,t)}\mathbb{E}^{Y^v,\theta}_t[X^v_t]+\widehat{A}_{(\theta_t,t)}\mathbb{E}^{\theta}_t[X^v_t] +B_{(\theta_t,t)}v_t+\widehat{B}_{(\theta_t,t)}\mathbb{E}^{\theta}_t[v_t]+b_{(\theta_t,t)}\right]\mathrm{d}t+Q_t\mathrm{d}V_t,\\
			\mathbb{E}^{Y^v,\theta}_0[X^v_0]=&\ \mu.
		\end{aligned}\right.
	\end{equation}In (\ref{equ: filter of x}),  the filter gain  $Q_t$  is given by 
	\begin{equation}
		\label{equ:def of QX in con}
		\begin{aligned}
			Q_t= C_{(\theta_t,t)}+\Phi_tF^{\top}_{(\theta_t,t)},
		\end{aligned}
	\end{equation}where the  conditional covariance 
	\begin{equation}
		\label{equ:def of Phi}
		\begin{aligned}
			\Phi_t= \mathbb{E}^{Y^v,\theta}_t[(X^v_t-\mathbb{E}^{Y^v,\theta}_t[X^v_t])(X^v_t-\mathbb{E}^{Y^v,\theta}_t[X^v_t])^{\top}]
		\end{aligned}
	\end{equation}satisfies
	\begin{equation}
		\label{equ:Riccati equ for error}
		\left\{\begin{aligned}
			 \dot{\Phi}_t=&\ \widetilde{A}_{(\theta_t,t)}\Phi_t+\Phi_t\widetilde{A}_{(\theta_t,t)}^{\top}+\bar{C}_{(\theta_t,t)}\bar{C}^{\top}_{(\theta_t,t)}-\Phi_tF_{(\theta_t,t)}^{\top}F_{(\theta_t,t)}\Phi_t,\\
			\Phi_0= &\ \sigma.
		\end{aligned}\right.
	\end{equation}
	\end{enumerate}

\end{Theorem}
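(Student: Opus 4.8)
The plan is to leverage the decomposition \eqref{equ:sum rela} together with Lemma~\ref{lem:filtr ident}, which allows us to transfer every statement about $\mathbb{F}^{Y^v,\theta}$-filtering of $X^v$ to $\mathbb{F}^{\lambda,\theta}$-filtering of $\eta$, where Lemmas~\ref{lem:N BM} and~\ref{lem:filt of eta} have already done the hard analytic work. Concretely, by Lemma~\ref{lem:filtr ident} we have $\mathbb{F}^{Y^v,\theta}=\mathbb{F}^{\lambda,\theta}$, and since $X^{1,v},Y^{1,v}$ are $\mathbb{F}^{\lambda,\theta}$-adapted (being built from $v$ and $\theta$ via \eqref{equ:x1}--\eqref{equ:Y1}), taking conditional expectation $\mathbb{E}^{Y^v,\theta}_t[\cdot]=\mathbb{E}^{\lambda,\theta}_t[\cdot]$ of $X^v=\eta+X^{1,v}$ yields $\mathbb{E}^{Y^v,\theta}_t[X^v_t]=\mathbb{E}^{\lambda,\theta}_t[\eta_t]+X^{1,v}_t$. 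Likewise $\mathbb{E}^{\theta}_t[X^v_t]=\mathbb{E}^{\theta}_t[\eta_t]+\mathbb{E}^{\theta}_t[X^{1,v}_t]$, and a comparison of \eqref{equ:obser} with \eqref{equ:def N}--\eqref{equ:def of V}, using $Y^v=\lambda+Y^{1,v}$ and \eqref{equ:Y1}, shows that $V_t=N_t$ for all $t$. Part~\ref{asser two in main filter} is then immediate from Lemma~\ref{lem:N BM}, once we note that adaptedness of $V$ to $\mathbb{F}^{Y^v,\theta}=\mathbb{F}^{\lambda,\theta}$ follows from that of $N$.

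For part~\ref{asser one in main filter}, I would simply add the SDE \eqref{equ:sde for filtering of eta} for $\mathbb{E}^{\lambda,\theta}_t[\eta_t]$ (from Lemma~\ref{lem:filt of eta}\ref{lem:filter of eta}) to the ODE \eqref{equ:x1} for $X^{1,v}_t$, and collect terms. Using $\mathbb{E}^{\lambda,\theta}_t[\eta_t]+X^{1,v}_t=\mathbb{E}^{Y^v,\theta}_t[X^v_t]$ and $\mathbb{E}^{\theta}_t[\eta_t]+\mathbb{E}^{\theta}_t[X^{1,v}_t]=\mathbb{E}^{\theta}_t[X^v_t]$, the drift of the sum reorganizes exactly into
\[
A\,\mathbb{E}^{Y^v,\theta}_t[X^v_t]+\widehat{A}\,\mathbb{E}^{\theta}_t[X^v_t]+B v_t+\widehat{B}\,\mathbb{E}^{\theta}_t[v_t]+b,
\]
and since the diffusion part $Q^\eta_t\,\mathrm{d}N_t$ carries over with $N_t=V_t$, it remains to identify the gain. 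Here I would argue that the error $X^v_t-\mathbb{E}^{Y^v,\theta}_t[X^v_t]=\eta_t-\mathbb{E}^{\lambda,\theta}_t[\eta_t]$ (since the controlled parts cancel, being $\mathbb{F}^{\lambda,\theta}$-adapted), so the conditional covariance $\Phi_t$ of \eqref{equ:def of Phi} coincides with $\Phi^\eta_t$; hence $\Phi_t$ solves the same Riccati equation \eqref{equ:Riccati equ for Phi}, which is \eqref{equ:Riccati equ for error}, and $Q_t=C+\Phi_tF^\top=Q^\eta_t$, giving \eqref{equ:def of QX in con}. The initial condition $\mathbb{E}^{Y^v,\theta}_0[X^v_0]=\mathbb{E}[\xi]=\mu$ follows from $X^{1,v}_0=0$.

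The only genuinely delicate point is the justification that $V_t=N_t$ and that the error processes coincide, which both rest on $X^{1,v},Y^{1,v}$ being $\mathbb{F}^{\lambda,\theta}$-adapted and hence measurable with respect to the conditioning $\sigma$-algebra; this is exactly where Lemma~\ref{lem:filtr ident} is indispensable, as it is what makes $\mathbb{F}^{Y^v,\theta}$ coincide with the control-independent filtration $\mathbb{F}^{\lambda,\theta}$ so that the autonomous filtering results of Lemma~\ref{lem:filt of eta} apply verbatim. I expect no real obstacle beyond being careful that all the conditional-expectation manipulations (e.g. pulling $X^{1,v}_t$ out of $\mathbb{E}^{Y^v,\theta}_t[\cdot]$, and the tower property linking $\mathbb{E}^{\theta}_t$ and $\mathbb{E}^{\lambda,\theta}_t$) are licensed by the adaptedness facts already established; the Brownian and zero-conditional-mean properties of $V$ require nothing new once $V=N$ is known.
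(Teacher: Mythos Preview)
Your proposal is correct and follows essentially the same approach as the paper's proof: both use the decomposition \eqref{equ:sum rela} and Lemma~\ref{lem:filtr ident} to identify $\mathbb{E}^{Y^v,\theta}_t[X^v_t]=\mathbb{E}^{\lambda,\theta}_t[\eta_t]+X^{1,v}_t$ and $V_t=N_t$, then transfer the conclusions of Lemmas~\ref{lem:N BM} and~\ref{lem:filt of eta} directly (including $\Phi_t=\Phi^{\eta}_t$ and $Q_t=Q^{\eta}_t$ via the observation that the filtering error equals $\eta_t-\mathbb{E}^{\lambda,\theta}_t[\eta_t]$). The paper's write-up is just a slightly more explicit unpacking of the same chain of identities.
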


\begin{proof}
	\ref{asser two in main filter}: Utilizing  identity (\ref{equ:sum rela}), Lemma \ref{lem:filtr ident}, and noting that $X^{1,v}$ is $\mathbb{F}^{Y^v,\theta}$-adapted, we have, for any $t\in[0,T]$,
	\begin{equation}
		\label{equ:decom for filtering of x}
		\begin{aligned}
		\mathbb{E}^{Y^v,\theta}_t[X^v_t]=&\ \mathbb{E}^{Y^v,\theta}_t[\eta_t+X^{1,v}_t]=\mathbb{E}^{Y^v,\theta}_t[\eta_t]+X^{1,v}_t
		= \mathbb{E}^{\lambda,\theta}_t[\eta_t]+X^{1,v}_t.
		\end{aligned}
	\end{equation}Combining  (\ref{equ:Y1})-(\ref{equ:def N}), (\ref{equ:def of V}), and (\ref{equ:decom for filtering of x}), we derive 
	\begin{equation}
		\label{equ: rela betw N and V}
		\begin{aligned}
			N_t =&\ \lambda_t-\int_{0}^{t}\left[F\mathbb{E}^{\lambda,\theta}_s[\eta_s]+\widehat{F}\mathbb{E}^{\theta}_s[\eta_s]+f\right]\mathrm{d}s\\
			=&\ Y_t-\int_{0}^{t}\left[FX^{1,v}_s+\widehat{F}\mathbb{E}^{\theta}_s[X^{1,v}_s]\right]\mathrm{d}s -\int_{0}^{t}\left[F\mathbb{E}^{\lambda,\theta}_s[\eta_s]+\widehat{F}\mathbb{E}^{\theta}_s[\eta_s]+f\right]\mathrm{d}s\\
			=&\ Y_t-\int_{0}^{t}\left[F\mathbb{E}^{Y^v,\theta}_s[X^v_s]+\widehat{F}\mathbb{E}^{\theta}_s[X^v_s]+f\right]\mathrm{d}s\\
			=&\ V_t.
		\end{aligned}
	\end{equation}According to Lemma \ref{lem:filtr ident}, conclusion \ref{con:brow mot} in Lemma \ref{lem:N BM}, and relation (\ref{equ: rela betw N and V}), we conclude that $V$ is an $\mathbb{F}^{Y^v,\theta}$-adapted Brownian motion. Moreover, it follows from conclusion \ref{con:condi expe zero} in Lemma \ref{lem:N BM} together with (\ref{equ: rela betw N and V}) that $\mathbb{E}^{\theta}_t[V_t]=0$ for any $t\in[0,T]$.

	\ref{asser one in main filter}: In view of  (\ref{equ:x1}), (\ref{equ:sde for filtering of eta}), and  (\ref{equ:decom for filtering of x}), it follows that $\mathbb{E}^{Y^v,\theta}_t[X^v_t]$ satisfies
	\begin{equation}
		\label{equ: filter of x in proof}
		\left\{\begin{aligned}
			\mathrm{d}\mathbb{E}^{Y^v,\theta}_t[X^v_t]=&\ \left[A_{(\theta_t,t)}\mathbb{E}^{Y^v,\theta}_t[X^v_t]+\widehat{A}_{(\theta_t,t)}\mathbb{E}^{\theta}_t[X^v_t] +B_{(\theta_t,t)}v_t+\widehat{B}_{(\theta_t,t)}\mathbb{E}^{\theta}_t[v_t]+b_{(\theta_t,t)}\right]\mathrm{d}t+Q^{\eta}_t\mathrm{d}N_t,\\
			\mathbb{E}^{Y^v,\theta}_0[X^v_0]=&\ \mu.
		\end{aligned}\right.
	\end{equation}Noting (\ref{equ:sum rela}), (\ref{equ:def of Phi}), Lemma \ref{lem:filtr ident} and   the fact that $X^{1,v}$ is $\mathbb{F}^{\lambda,\theta}$-adapted, we obtain $\Phi^{\eta}_t=\mathbb{E}^{\lambda,\theta}_t[(\eta_t-\mathbb{E}^{\lambda,\theta}_t[\eta_t])(\eta_t-\mathbb{E}^{\lambda,\theta}_t[\eta_t])^{\top}]	= \mathbb{E}^{Y^v,\theta}_t[(X^v_t-\mathbb{E}^{Y^v,\theta}_t[X^v_t])(X^v_t-\mathbb{E}^{Y^v,\theta}_t[X^v_t])^{\top}]=\Phi_t$, which, together with (\ref{equ:def of Q in con}), gives 
	\begin{equation}
		\label{equ:Q eta=Q X}
		\begin{aligned}
			Q^{\eta}_t= C_{(\theta_t,t)}+\Phi^{\eta}_tF^{\top}_{(\theta_t,t)}
			= C_{(\theta_t,t)}+\Phi_tF^{\top}_{(\theta_t,t)}
			=Q_t.
		\end{aligned}
	\end{equation}Combining Lemma \ref{lem:filt of eta} and  (\ref{equ: rela betw N and V})-(\ref{equ:Q eta=Q X}) yields  assertion \ref{asser one in main filter}.
	\end{proof}

\begin{Remark}
Since the coefficients in (\ref{equ:state}) and (\ref{equ:obser}) are modulated by the Markov chain $\theta$, whose regime information is available, the conditional covariance $\Phi$ becomes a random process. From (\ref{equ:Riccati equ for error}), it follows that at any time $t$, $\Phi_t$ depends on the entire path $\{\theta_s, s\in[0,t]\}$ rather than on the instantaneous regime $\theta_t$ alone. Consequently, the filter gain $Q_t$ also depends on the past trajectory of the Markov chain up to time $t$. This implies that, although all the coefficients in  model (\ref{equ:state})-(\ref{equ:obser}) are Markovian, the filtering computation introduces  non-Markovian feature.
\end{Remark}

	\subsection{Equivalent Problem with Full Observation}\label{sub:equi prob}
In this section, first, we formulate problem \textbf{(FO-CMF)} with a  full observation framework and show that  it is  equivalent  to  \textbf{(PO-CMF)}. Then, we solve  \textbf{(FO-CMF)} using maximum principle and obtain a feedback representation of optimal control.

\subsubsection{Formulation of \textbf{(FO-CMF)}} 
	For  convenience of notation, we use $\mathbb{X}^v_t$ to denote $\mathbb{E}^{Y^v,\theta}_t[X^v_t]$. By the tower property of conditional expectation, it follows that $\mathbb{E}^{\theta}_t[X^v_t]=\mathbb{E}^{\theta}_t[\mathbb{E}^{Y^v,\theta}_t[X^v_t]]=\mathbb{E}^{\theta}_t[\mathbb{X}^v_t]$. Bearing this in mind, dynamics (\ref{equ: filter of x}) can be rewritten as 
\begin{equation}
		\label{equ: rewr dyna}
		\left\{\begin{aligned}
			\mathrm{d}\mathbb{X}^v_t=&\ \left[A_{(\theta_t,t)}\mathbb{X}^v_t+\widehat{A}_{(\theta_t,t)}\mathbb{E}^{\theta}_t[\mathbb{X}^v_t]+B_{(\theta_t,t)}v_t +\widehat{B}_{(\theta_t,t)}\mathbb{E}^{\theta}_t[v_t]+b_{(\theta_t,t)}\right]\mathrm{d}t +Q_t\mathrm{d}V_t,\\
			\mathbb{X}^v_0=&\ \mu.
		\end{aligned}\right.
\end{equation}Define the estimation error $\mathcal{E}=X^v-\mathbb{X}^v$ and notation $\mathcal{A}_t=A_{(\theta_t,t)}-Q_tF_{(\theta_t,t)}$. Then, by (\ref{equ:state}) and Theorem \ref{the:filter of X}, $\mathcal{E}$ satisfies 
\begin{equation}
	\label{equ:dyna for error}
	\left\{\begin{aligned}
		\mathrm{d}\mathcal{E}_t=&\
		\mathcal{A}_t\mathcal{E}_t\mathrm{d}t -\Phi_tF^{\top}_{(\theta_t,t)}\mathrm{d}W_t+\bar{C}_{(\theta_t,t)}\mathrm{d}\bar{W}_t,\\
		\mathcal{E}_0=&\ \xi-\mu,
	\end{aligned}\right.
\end{equation}which implies that estimation error $\mathcal{E}$ is independent of control.

To proceed, we  introduce a $\mathbb{X}^{v}$-related cost functional: 
\begin{equation}
	\label{equ: rewr cost}
	\begin{aligned}
		&\ J^{F}\left(\xi,\ell_0;v\right)\\
		=&\   \frac{1}{2} \mathbb{E}\left\{\int _ { 0 } ^ { T } \left[\left\langle\left[\begin{array}{l}
		\mathbb{X}^v \\
		v
		\end{array}\right],\left[\begin{array}{cc}
		H & G^{\top} \\
		G & R
		\end{array}\right]\left[\begin{array}{l}
		\mathbb{X}^v \\
		v
		\end{array}\right]\right\rangle +\left\langle\left[\begin{array}{l}
		\mathbb{E}^{\theta}_t[\mathbb{X}^v] \\
		\mathbb{E}^{\theta}_t[v]
		\end{array}\right],\left[\begin{array}{cc}
		\widehat{H} & \widehat{G}^{\top} \\
		\widehat{G} & \widehat{R}
		\end{array}\right]\left[\begin{array}{c}
		\mathbb{E}^{\theta}_t[\mathbb{X}^v] \\
		\mathbb{E}^{\theta}_t[v]
		\end{array}\right]\right\rangle \right]\mathrm{d} t \right.\\
		&\ \left. +\langle \mathbb{X}^v_T, S_{\theta_T} 	\mathbb{X}^v_T\rangle +\langle\mathbb{E}^{\theta}_T[\mathbb{X}^v_T], \widehat{S}_{\theta_T} 	\mathbb{E}^{\theta}_T[\mathbb{X}^v_T]\rangle \right\},\\
	\end{aligned}
\end{equation}and  an  $\mathcal{E}$-related one:
\begin{equation}
	\label{equ: cost inde of cont}
	\begin{aligned}
		 \mathbb{J}\left(\xi,\ell_0\right)=\frac{1}{2}\mathbb{E} \left\{\int_{0}^{T}\left \langle \mathcal{E}, H \mathcal{E}\right\rangle \mathrm{d}t+\left\langle \mathcal{E}_T, S_{\theta_T} \mathcal{E}_T\right\rangle \right\}.
	\end{aligned}
\end{equation}

Based on (\ref{equ: rewr dyna}) and (\ref{equ: rewr cost}), we formulate the following optimal control problem.\\
\textbf{\textbf{(FO-CMF)}:} Find an admissible control $u^{*}\in\mathcal{U}_{ad}$ such that $J^{F}\left(\xi,\ell_0; u^{*}\right)=\inf_{v\in\mathcal{U}_{ad}}J^{F}\left(\xi,\ell_0; v\right)$ subject to (\ref{equ: rewr dyna}).

For \textbf{(FO-CMF)}, it is still a conditional mean-field type control problem. However, since system (\ref{equ: rewr dyna}) is driven by innovation process  $V$, which is an $\mathbb{F}^{Y^v,\theta}$-adapted Brownian motion, \textbf{(FO-CMF)} is  an optimal control problem with fully observed state. Furthermore, we have the following relation between \textbf{(PO-CMF)} and \textbf{(FO-CMF)}.

\begin{Theorem}
\label{the: equi betw two pro}
\textbf{(PO-CMF)} is equivalent to 	\textbf{(FO-CMF)}.
\end{Theorem}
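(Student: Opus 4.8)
The plan is to show the two infima coincide by establishing that for every admissible control the two cost functionals differ by the control-independent quantity $\mathbb{J}(\xi,\ell_0)$, i.e.\ $J^{P}(\xi,\ell_0;v)=J^{F}(\xi,\ell_0;v)+\mathbb{J}(\xi,\ell_0)$ for all $v\in\mathcal{U}_{ad}$. Since both problems are minimized over the same admissible set $\mathcal{U}_{ad}$ and $\mathbb{J}(\xi,\ell_0)$ does not depend on $v$, this identity immediately gives that a control is optimal for \textbf{(PO-CMF)} if and only if it is optimal for \textbf{(FO-CMF)}, and that the two optimal values differ by the constant $\mathbb{J}(\xi,\ell_0)$; in particular the optimal controls are the same, which is the content of the equivalence.

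First I would use the orthogonal decomposition $X^v_t=\mathbb{X}^v_t+\mathcal{E}_t$ from the definition of the estimation error, together with the key orthogonality fact that $\mathcal{E}_t$ is $\mathbb{F}^{Y^v,\theta}$-conditionally centered and uncorrelated with any $\mathbb{F}^{Y^v,\theta}$-measurable random variable; since $\mathbb{X}^v_t$ and $v_t$ are both $\mathbb{F}^{Y^v,\theta}$-adapted (the latter by admissibility), we get $\mathbb{E}[\langle X^v_t, H X^v_t\rangle]=\mathbb{E}[\langle \mathbb{X}^v_t, H\mathbb{X}^v_t\rangle]+\mathbb{E}[\langle\mathcal{E}_t,H\mathcal{E}_t\rangle]$, the cross terms vanishing because $\mathbb{E}[\langle \mathbb{X}^v_t,H\mathcal{E}_t\rangle]=\mathbb{E}[\langle \mathbb{X}^v_t,H\,\mathbb{E}^{Y^v,\theta}_t[\mathcal{E}_t]\rangle]=0$ (noting $H$ is deterministic given $\theta_t$, hence $\mathbb{F}^{Y^v,\theta}_t$-measurable). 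Similarly $\mathbb{E}[\langle v_t, G X^v_t\rangle]=\mathbb{E}[\langle v_t, G\mathbb{X}^v_t\rangle]$ because the $G\mathcal{E}_t$ contribution conditions away to zero. For the conditional-expectation terms one uses the tower property $\mathbb{E}^{\theta}_t[X^v_t]=\mathbb{E}^{\theta}_t[\mathbb{X}^v_t]$ already recorded before \eqref{equ: rewr dyna}, so these terms in $J^P$ and $J^F$ are literally identical. The terminal terms are handled the same way, using that $\check S_{\ell},S_{\ell}$ are deterministic and $\mathbb{X}^v_T,\mathbb{E}^{\theta}_T[\mathbb{X}^v_T]$ are $\mathbb{F}^{Y^v,\theta}_T$-measurable, so that the $S_{\theta_T}$ quadratic form splits and the $\widehat S_{\theta_T}$ term is unchanged; what remains after subtraction is exactly $\frac12\mathbb{E}[\int_0^T\langle\mathcal{E},H\mathcal{E}\rangle\,\mathrm{d}t+\langle\mathcal{E}_T,S_{\theta_T}\mathcal{E}_T\rangle]=\mathbb{J}(\xi,\ell_0)$.

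The main obstacle, and the step deserving the most care, is justifying the vanishing of the cross terms uniformly in $v$: this requires knowing that $\mathcal{E}$ is genuinely orthogonal (in the conditional sense) to the $\mathbb{F}^{Y^v,\theta}$-information, which rests on $\mathbb{X}^v_t=\mathbb{E}^{Y^v,\theta}_t[X^v_t]$ being the \emph{true} optimal filter established in Theorem~\ref{the:filter of X}, and on the admissibility requirement (Definition~\ref{def:admis cont}) that $v_t$ be $\mathbb{F}^{Y^v,\theta}$-adapted with the filtration $\mathbb{F}^{Y^v,\theta}=\mathbb{F}^{\lambda,\theta}$ independent of $v$ (Lemma~\ref{lem:filtr ident})—this is precisely what makes the decomposition work simultaneously for all competing controls. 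One must also confirm the requisite integrability so that Fubini and the conditioning manipulations are legitimate: this follows from standard $L^2$ estimates for the SDEs \eqref{equ:state}, \eqref{equ: rewr dyna}, \eqref{equ:dyna for error} under assumptions \ref{ass:coef in dyna}--\ref{ass: coe in cost one} together with $v\in\mathcal{U}$. Once the identity $J^{P}(\xi,\ell_0;v)=J^{F}(\xi,\ell_0;v)+\mathbb{J}(\xi,\ell_0)$ is in hand, taking the infimum over $v\in\mathcal{U}_{ad}$ on both sides and using that $\mathbb{J}(\xi,\ell_0)$ is a control-free constant concludes the proof.
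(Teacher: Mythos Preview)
Your proposal is correct and follows essentially the same approach as the paper: both arguments rest on the orthogonal decomposition $X^v=\mathbb{X}^v+\mathcal{E}$, the conditional orthogonality $\mathbb{E}\langle\mathcal{E}_t,\mathcal{X}_t\rangle=0$ for $\mathbb{F}^{Y^v,\theta}$-adapted $\mathcal{X}$ (which the paper records as \eqref{equ:orth rela}), and the tower identity $\mathbb{E}^{\theta}_t[X^v_t]=\mathbb{E}^{\theta}_t[\mathbb{X}^v_t]$ to obtain the splitting $J^{P}=J^{F}+\mathbb{J}$ with $\mathbb{J}$ control-free. Your write-up is in fact more explicit than the paper's about how each term in the cost is handled and about the integrability needed for the conditioning manipulations, but the underlying idea is identical.
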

\begin{proof}
	From (\ref{equ:dyna for error}), we observe that estimation error $\mathcal{E}$ is independent of control $v$. Furthermore, by virtue of the connection between conditional expectation and orthogonal projection (\cite[Section 6.11]{Williams 1991}), the following orthogonality property holds between $\mathcal{E}$ and all $\mathbb{F}^{Y^v,\theta}$-adapted processes:
\begin{equation}	
	\label{equ:orth rela}
	\begin{aligned}
			\mathbb{E}\left\langle\mathcal{E}_t,\mathcal{X}_t\right\rangle=0, \hspace{0.2cm}  t\in[0,T], \mathcal{X}\in\mathcal{L}^2_{\mathbb{F}^{Y^v,\theta}}([0,T];\mathbb{R}^{n}).
	\end{aligned}
\end{equation}Using (\ref{equ:orth rela}), we can decompose cost functional (\ref{equ:ori cost}) as
\begin{equation}
	\label{equ: deco of cost}
	\begin{aligned}
	 J^{P}\left(\xi,\ell_0; v\right)=J^{F}\left(\xi,\ell_0;v\right)+ \mathbb{J}\left(\xi,\ell_0\right).
	\end{aligned}
\end{equation}Given that $\mathcal{E}$ is independent of control $v$,  cost $\mathbb{J}\left(\xi,\ell_0\right)$ is unaffected by $v$. Therefore, minimizing $J^{P}\left(\xi,\ell_0; v\right)$ subject to (\ref{equ:state}) (i.e., \textbf{(PO-CMF)}) is identical to  minimizing  cost $J^{F}\left(\xi,\ell_0;v\right)$ subject to (\ref{equ: rewr dyna}) (i.e., \textbf{(FO-CMF)}).
\end{proof}

\subsubsection{Solution of \textbf{(FO-CMF)}}

Due to the fact that the cost functions in  (\ref{equ: rewr cost}) depend on    the conditional expectations terms in a nonlinear way, the resulting control problem is time-inconsistent, which incurs that  the Bellman optimality principle  fails to work. Consequently, we resort to the stochastic maximum principle to solve \textbf{(FO-CMF)}.



 To characterize the optimality condition,  adjoint equation plays a  central role within maximum principle. Adjoint equation takes the form of a BSDE, which is fundamentally different from a deterministic backward ODE. In particular, to ensure the adaptedness of the solution, the BSDE introduces a stochastic integral term of another process (see \cite[Chapter 7]{Yong Zhou 1999}).  This additional component is elegantly constructed via the martingale representation theorem, which, roughly, says the fact that, for a given filtration $\mathbb{G}$,  every $\mathbb{G}$-martingale can be represented as  a stochastic integral of some $\mathbb{G}$-adapted process with respect to $\mathbb{G}$-Brownian motion.

Since  control process $v$ is required to be adapted to $\mathbb{F}^{Y^v,\theta}$, the solution of the adjoint equation should also be $\mathbb{F}^{Y^v,\theta}$-adapted. However, the observation process $Y^v$ is not a Brownian motion, which makes it impossible to construct the desired stochastic integral representation directly through the martingale representation theorem in terms of $Y^v$. One might consider applying the martingale representation theorem with respect to the underlying Brownian motions $W$ and $\bar{W}$ to obtain the desired integrand processes. Unfortunately, this approach does not work. Doing so would inevitably introduce $W$ and $\bar{W}$ into the adjoint equation, implying that these unobservable processes would have to be accessible when designing the optimal control. This contradicts the partial observation setting, since the information of $W$ and $\bar{W}$ is not available to the controller. Recall from Theorem \ref{the:filter of X}  that  innovation process $V$ is an $\mathbb{F}^{Y^v,\theta}$-adapted Brownian motion. This naturally raises the question of whether an $\mathbb{F}^{Y^v,\theta}$-martingale admits a stochastic integral representation with respect to $V$. To explore this, define the filtration $\mathbb{F}^{V}=\{\mathcal{F}^{V}_t\}_{t\in[0,T]}
$, where $\mathcal{F}^{V}_t=\sigma\left\{V_s,s\in[0,t]\right\}$. From (\ref{equ:def of V}), it follows immediately that $\mathbb{F}^{V}\subseteq\mathbb{F}^{Y^v,\theta}$. If the reverse inclusion also holds, then $\mathbb{F}^{V}$ and $\mathbb{F}^{Y^v,\theta}$ would coincide, allowing us to apply the martingale representation theorem directly to obtain the desired stochastic integral representation of any $\mathbb{F}^{Y^v,\theta}$-martingale with respect to $V$. However, no general result appears to establish this equivalence between the two filtrations.

To overcome this difficulty, inspired by \cite[Theorem 3.1]{Fujisaki Kallianpur Kunita 1972} and \cite[Lemma 2.1]{Yao Zhang Zhou 2006}, we can show that any $\mathbb{F}^{Y^v,\theta}$-martingale has a stochastic integral representation with respect to $V$ and $M$ through an appropriate measure transformation approach, offering us the following desired result.

	\begin{Theorem}
		\label{The: stoc inte repr}
		Suppose that $\kappa$ is an $\mathbb{R}^n$-valued square integrable  martingale with respect to $\mathbb{F}^{Y^v,\theta}$ under $\mathbb{P}$, then there exist  unique processes $\alpha\in \mathcal{L}^2_{\mathbb{F}^{Y^v,\theta}}([0,T];\mathbb{R}^{n\times r})$ and $\beta\in\mathcal{M}^2_{\mathbb{F}^{Y^v,\theta}}([0,T];\mathbb{R}^{n})$  such that, for any $t\in[0,T]$, 
		\begin{equation*}
			\label{ass: con of repre}
			\begin{aligned}
				\kappa_t=\kappa_0+\int_{0}^{t}\alpha_sdV_s+\int_{0}^{t}\beta_s\bullet\mathrm{d}M_s, \hspace{0.2cm} \mathbb{P}-\text{a.s.}
			\end{aligned}
		\end{equation*}
	\end{Theorem}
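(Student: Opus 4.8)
The plan is to prove this martingale representation theorem by a change-of-measure argument that reduces the problem to a case where the relevant filtrations coincide and a classical representation theorem applies. First I would introduce the exponential martingale
\begin{equation*}
  Z_t = \exp\left(-\int_0^t \left[F_{(\theta_s,s)}\mathbb{X}^v_s + \widehat{F}_{(\theta_s,s)}\mathbb{E}^\theta_s[X^v_s] + f_{(\theta_s,s)}\right]^\top \mathrm{d}Y^v_s + \frac{1}{2}\int_0^t \left|F_{(\theta_s,s)}\mathbb{X}^v_s + \widehat{F}_{(\theta_s,s)}\mathbb{E}^\theta_s[X^v_s] + f_{(\theta_s,s)}\right|^2 \mathrm{d}s\right),
\end{equation*}
which is a genuine $\mathbb{F}^{Y^v,\theta}$-martingale under $\mathbb{P}$ (Novikov's condition holds here because, by assumptions \ref{ass:coef in dyna} and the second moment bounds on $X^v$ and its filter, the drift term is bounded in $L^2$; one may need a localization/stopping argument if only $L^2$ rather than boundedness is available). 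Define a new probability measure $\mathbb{Q}$ by $\mathrm{d}\mathbb{Q}/\mathrm{d}\mathbb{P} = Z_T$ on $\mathcal{F}^{Y^v,\theta}_T$. By the Girsanov theorem, under $\mathbb{Q}$ the innovation process $V$ becomes an $\mathbb{F}^{Y^v,\theta}$-Brownian motion, and crucially $Y^v = V$ under $\mathbb{Q}$ up to the additive absolutely continuous term that has been removed — more precisely, $Y^v$ itself is a $\mathbb{Q}$-Brownian motion, so $\mathbb{F}^{Y^v} = \mathbb{F}^{V}$. The martingale property of $M^{\ell j}$ is preserved under $\mathbb{Q}$ because $Z$ is driven only by $\mathrm{d}Y^v$ (equivalently $\mathrm{d}W$ through the observation equation) and is orthogonal to the Markov chain martingales: $[Z, M^{\ell j}] = 0$.

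Next I would invoke the representation theorem for the pair $(V, M)$ under $\mathbb{Q}$. Since under $\mathbb{Q}$ we have $\mathbb{F}^{Y^v,\theta} = \mathbb{F}^{V} \vee \mathbb{F}^\theta$, with $V$ a Brownian motion independent of $\theta$ (one checks $[V, M^{\ell j}] = 0$ and uses that $\theta$ is unaffected by the measure change on this filtration), the product structure gives a martingale representation: every square-integrable $\mathbb{Q}$-martingale $\widetilde\kappa$ adapted to this filtration can be written as $\widetilde\kappa_t = \widetilde\kappa_0 + \int_0^t \widetilde\alpha_s\,\mathrm{d}V_s + \int_0^t \widetilde\beta_s\bullet\mathrm{d}M_s$ with $\widetilde\alpha \in \mathcal{L}^2_{\mathbb{F}^{Y^v,\theta}}([0,T];\mathbb{R}^{n\times r})$ and $\widetilde\beta \in \mathcal{M}^2_{\mathbb{F}^{Y^v,\theta}}([0,T];\mathbb{R}^n)$. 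This is the step where I would cite \cite[Theorem 3.1]{Fujisaki Kallianpur Kunita 1972} and \cite[Lemma 2.1]{Yao Zhang Zhou 2006}, or assemble it from the Brownian representation theorem plus the representation of purely discontinuous $\mathbb{F}^\theta$-martingales against the $M^{\ell j}$'s (the latter following from \cite[Lemma IV.21.12]{Rogers 2000}).

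Finally I would transfer the representation back to $\mathbb{P}$. Given the $\mathbb{P}$-martingale $\kappa$, set $\widetilde\kappa_t = \kappa_t Z_t$ — but this is a $\mathbb{P}$-martingale, not directly a $\mathbb{Q}$-martingale; the correct device is to use the abstract Bayes formula: $\kappa_t = \mathbb{E}^{\mathbb{P}}[\kappa_T \mid \mathcal{F}^{Y^v,\theta}_t]$ corresponds to $\kappa_t = Z_t^{-1}\mathbb{E}^{\mathbb{Q}}[\kappa_T Z_T^{-1} Z_t \cdot \text{(corrections)}\mid \mathcal{F}^{Y^v,\theta}_t]$; more cleanly, one applies the representation under $\mathbb{Q}$ to the $\mathbb{Q}$-martingale $t\mapsto \mathbb{E}^{\mathbb{Q}}[\kappa_T \mid \mathcal{F}^{Y^v,\theta}_t]$, obtains integrands $\bar\alpha, \bar\beta$, and then uses It\^o's product rule on $\kappa_t = Z_t^{-1}\cdot(Z_t\kappa_t)$ together with the SDE for $Z_t^{-1}$ (which is driven by $\mathrm{d}V_t$) to recover a representation of $\kappa$ of the desired form under $\mathbb{P}$; the Markov-chain integrand is unchanged by this manipulation since $Z^{-1}$ is continuous and orthogonal to $M$. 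Uniqueness of $\alpha$ and $\beta$ follows from the orthogonality relations $[V, M^{\ell j}] = 0$ and $[M^{\ell j}, M^{k\iota}] = 0$ for $(\ell,j)\neq(k,\iota)$ together with It\^o isometry: if two representations agreed, subtracting them and taking quadratic variation forces $\alpha = \alpha'$ $\mathrm{d}\mathbb{P}\times\mathrm{d}t$-a.e. and $\beta^{\ell j} = \beta'^{\ell j}$ $\mathrm{d}\mathbb{P}\times\mathrm{d}[M^{\ell j}]$-a.e. The main obstacle I anticipate is the measure-change bookkeeping in this last transfer step — ensuring the Radon–Nikodym density $Z$ is a true martingale (not merely a local one) so that $\mathbb{Q}$ is a genuine probability measure, and then correctly propagating the $Z^{-1}$ factor through It\^o's formula without spuriously introducing $W$ or $\bar{W}$ into the final representation; verifying that the measure change leaves $\mathbb{F}^\theta$ and the $M^{\ell j}$-martingale property intact is the other delicate point, and it hinges on the fact that $Z$ depends on the randomness only through $Y^v$.
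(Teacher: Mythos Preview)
Your overall strategy --- change measure via an exponential density so that the observation becomes a Brownian motion independent of $\theta$, apply a representation theorem for the product filtration under $\mathbb{Q}$, then transfer back to $\mathbb{P}$, with a stopping-time localization to force Novikov's condition --- is exactly the route the paper takes.

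Where your write-up is hazy is precisely the transfer-back step, and the paper handles it more cleanly than any of the devices you list. Instead of manipulating $Z_t^{-1}(Z_t\kappa_t)$ via the product rule or invoking Bayes' formula, the paper applies the Girsanov--Meyer theorem directly: since $\kappa$ is a $\mathbb{P}$-martingale and $\mathrm{d}Z/Z=-h^\top\mathrm{d}V$, the process
\[
\widetilde\kappa_t \;:=\; \kappa_t + \Big[\int_0^{\cdot} h_s^\top\,\mathrm{d}V_s,\;\kappa\Big]_t
\]
is automatically a $\mathbb{Q}$-local martingale, and square-integrability under $\mathbb{Q}$ is checked directly. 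One then represents $\widetilde\kappa$ under $\mathbb{Q}$ with respect to $(Y^v,M)$ (equivalently $(\lambda,M)$), obtaining integrands $\alpha,\beta$. Substituting $\mathrm{d}Y^v=\mathrm{d}V+h\,\mathrm{d}t$ and rearranging shows that
\[
\kappa_t-\int_0^t\alpha_s\,\mathrm{d}V_s-\int_0^t\beta_s\bullet\mathrm{d}M_s
\;=\;\int_0^t\alpha_s h_s\,\mathrm{d}s-\Big[\int_0^{\cdot} h_s^\top\,\mathrm{d}V_s,\kappa\Big]_t,
\]
where the left side is a $\mathbb{P}$-martingale and the right side has finite variation and vanishes at $t=0$; hence both are identically zero. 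This avoids any explicit product-rule bookkeeping with $Z^{-1}$ and the integrability issues you flag.

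One small correction: under $\mathbb{Q}$ it is $Y^v$ (equivalently $\lambda$), not the innovation $V$, that becomes the Brownian motion --- $V$ was already a $\mathbb{P}$-Brownian motion and acquires drift $-h$ under $\mathbb{Q}$. You do write this correctly a line later, but your phrase ``invoke the representation theorem for the pair $(V,M)$ under $\mathbb{Q}$'' should read $(Y^v,M)$; the paper represents against $\lambda$ and $M$.
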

	\begin{proof}
		From Lemma \ref{lem:filtr ident} and (\ref{equ: rela betw N and V}), it suffices to prove that, for any $\mathbb{R}^n$-valued square integrable  martingale with respect to $\mathbb{F}^{Y^v,\theta}$ under $\mathbb{P}$, there exist unique processes $\alpha\in \mathcal{L}^2_{\mathbb{F}^{\lambda,\theta}}([0,T];\mathbb{R}^{n\times r})$ and $\beta\in\mathcal{M}^2_{\mathbb{F}^{\lambda,\theta}}([0,T];\mathbb{R}^{n})$  such that
		\begin{equation}
			\label{equ:equi obje}
			\begin{aligned}
				\kappa_t=\kappa_0+\int_{0}^{t}\alpha_sdN_s+\int_{0}^{t}\beta_s\bullet\mathrm{d}M_s, \hspace{0.2cm} \mathbb{P}-\text{a.s.}
			\end{aligned}
		\end{equation}

		Define a stochastic exponential process $\mathscr{E}=\{\mathscr{E}_t\}_{t\in[0,T]}$ by $	\mathscr{E}_t=\mathrm{exp}\{-\int_{0}^{t}\hbar_s^{\top}\mathrm{d}N_s-\frac{1}{2}\int_{0}^{t}|\hbar_s|^2\mathrm{d}s\}$, where $\hbar_s= F\mathbb{E}^{\lambda,\theta}_{s}[\eta_s]+\widehat{F}\mathbb{E}^{\theta}_{s}[\eta_s]+f$ and  $N$  is defined in (\ref{equ:def N}).

		Without loss of generality, we assume that $\kappa_0=0$. To clarify the main idea of the argument without being encumbered by the technical intricacies of the general case, we assume for a moment that there exists a constant $\mathbb{C}$ such that 
		\begin{equation}
			\label{equ: ass bound}
			\begin{aligned}
				\int_{0}^{T}|\hbar_t|^2\mathrm{d}t\leq \mathbb{C}, \hspace{0.2cm}\text{ and } \hspace{0.2cm}\mathscr{E}_T \leq \mathbb{C}, \hspace{0.2cm} \mathbb{P}-\text{a.s.}
			\end{aligned}
		\end{equation}Under this boundedness assumption, Novikov’s criterion \cite[Theorem 45, Chapter 3]{Protter 2005} is satisfied. Then process $\mathscr{E}$  is a martingale with respect to $\mathbb{F}^{\lambda,\theta}$ under  $\mathbb{P}$.  Hence, we can define a new probability measure $\mathbb{Q}$ equivalent to $\mathbb{P}$ by  $\mathrm{d}\mathbb{Q}=\mathscr{E}_t\mathrm{d}\mathbb{P}$. Hereinafter, we denote by $\mathbb{E}^{\mathbb{Q}}$ the mathematical expectation under $\mathbb{Q}$. Then, by virtue of Girsanov-Meyer Theorem \cite[Theorem 39, Chapter 3]{Protter 2005}, we obtain that
		\begin{equation}
			\label{ass: def of tilde kappa}
			\begin{aligned}
				\widetilde{\kappa}_t= \kappa_t+[\int_{0}^{\cdot}\hbar_s^{\top}\mathrm{d}N_s,\kappa]_t
			\end{aligned}
		\end{equation}is a  martingale with respect to $\mathbb{F}^{\lambda,\theta}$ under $\mathbb{Q}$. To go a step further, using Bayes formula,  Fubini's Theorem, and  \cite[Corollary 3 of Theorem 27, Chapter 2]{Protter 2005}, we have  
		\begin{equation*}
			\begin{aligned}
				 \mathbb{E}^{\mathbb{Q}}\left[\int_{0}^{T}|\widetilde{\kappa}_t|^2\mathrm{d}t\right]
				=&\ \mathbb{E}\left[\mathscr{E}_T\int_{0}^{T}\left|\kappa_t+[\int_{0}^{\cdot}\hbar_s^{\top}\mathrm{d}N_s,\kappa]_t\right|^2\mathrm{d}t\right]\\
				\leq &\ 2 \mathbb{C}\mathbb{E}\left[\int_{0}^{T}|\kappa_t|^2\mathrm{d}t+\int_{0}^{T}\left|[\int_{0}^{\cdot}\hbar_s^{\top}\mathrm{d}N_s,\kappa]_t\right|^2\mathrm{d}t\right]\\
				\leq &\ 2\mathbb{C}\mathbb{E}\left[\int_{0}^{T}|\kappa_t|^2\mathrm{d}t+\int_{0}^{T}\int_{0}^{t}|\hbar_s|^2\mathrm{d}s[\kappa]_t\mathrm{d}t\right]\\
				\leq &\  2\mathbb{C}\mathbb{E}\left[\int_{0}^{T}|\kappa_t|^2\mathrm{d}t+\int_{0}^{T}|\hbar_t|^2\mathrm{d}t\int_{0}^{T}[\kappa]_t\mathrm{d}t\right]\\
				\leq &\  2\mathbb{C}\left(1+\mathbb{C}\right)\mathbb{E}\left[\int_{0}^{T}|\kappa_t|^2\mathrm{d}t\right]\\	
				<& \ \infty,
			\end{aligned}
		\end{equation*}which shows that $\widetilde{\kappa}$ is also square integrable under $\mathbb{Q}$.

		It follows from (\ref{equ:def N})  and Girsanov Theorem for Markov-modulated process  (\cite[Lemma 2.1]{Yao Zhang Zhou 2006}) that,  under  $\mathbb{Q}$, process $\lambda$   is an $\mathbb{R}^r$-valued standard Brownian motion.  In addition, the dynamics of  Markov chain  $\theta$ is unchanged  and processes $\theta$ and $\lambda$ are mutually independent under $\mathbb{Q}$. Applying Theorem B.4.6. in \cite{Donnelly 2008} to $\widetilde{\kappa}$ under $\mathbb{Q}$, we obtain that  there exist  unique processes  $\alpha\in \mathcal{L}^2_{\mathbb{F}^{\lambda,\theta}}([0,T];\mathbb{R}^{n\times r})$ and $\beta\in\mathcal{M}^2_{\mathbb{F}^{\lambda,\theta}}([0,T];\mathbb{R}^{n})$ such that 
		\begin{equation}
			\label{ass:repre of k}
			\begin{aligned}
				\widetilde{\kappa}_t=\int_{0}^{t}\alpha_s\mathrm{d}\lambda_s+\int_{0}^{t}\beta_s\bullet\mathrm{d}M_s,\hspace{0.5cm}\mathbb{Q}-\text{a.s.}
			\end{aligned}
		\end{equation}
	 Since $\mathbb{Q}$ and $\mathbb{P}$ are equivalent,   representation (\ref{ass:repre of k})  also holds  $\mathbb{P}-$a.s.  To proceed, combining (\ref{equ:def N}), (\ref{ass: def of tilde kappa}), and (\ref{ass:repre of k}) yields
	\begin{equation}
		\label{ass: consta}
		\begin{aligned}
			&\ \kappa_t-\int_{0}^{t}\alpha_s\mathrm{d}N_s-\int_{0}^{t}\beta_s\bullet\mathrm{d}M_s
			= \int_{0}^{t}\alpha_s\hbar_s\mathrm{d}s-[\int_{0}^{\cdot}\hbar_s^{\top}\mathrm{d}N_s,\kappa]_t,\hspace{0.5cm}\mathbb{P}-\text{a.s.}
		\end{aligned}
	\end{equation}Note that the left-hand side of (\ref{ass: consta}) is a   martingale with respect to $\mathbb{F}^{\lambda,\theta}$ under $\mathbb{P}$, whereas the right-hand side is a finite-variation process with initial value zero. According to \cite[Theorem 4, p.~34]{Lipster 1989}, it follows  that, for any $t\in[0,T]$,
	\begin{equation*}
		\begin{aligned}
			\kappa_t-\int_{0}^{t}\alpha_s\mathrm{d}N_s-\int_{0}^{t}\beta_s\bullet\mathrm{d}M_s=0, \hspace{0.5cm}\mathbb{P}-\text{a.s.}
		\end{aligned}
	\end{equation*}Thus, the desired result (\ref{equ:equi obje}) has been established under the uniform boundedness assumption (\ref{equ: ass bound}).

	For the general case where the  constant $\mathbb{C}$ in (\ref{equ: ass bound}) does not exist, we introduce a stopping time $	T_{n}=\textrm{inf}\{t\in[0,T],\int_{0}^{t}|\hbar_s|^2\mathrm{d}s\vee\mathscr{E}_t>n\}$ for any $n\ge 1$. Define a probability $\mathbb{Q}^n$ by $\mathrm{d}\mathbb{Q}^n=\mathscr{E}_{t\wedge T_n}\mathrm{d}\mathbb{P}$. Then  all arguments above remain valid with $\mathbb{Q}$ replaced by $\mathbb{Q}^n$, and with every process stopped at stopping time $T_n$. Finally, the desired result follows by letting $n\to\infty$ and applying standard convergence theorems.
	\end{proof}


Suppose that $u^{*}$ is an optimal control of \textbf{(FO-CMF)}, $\mathbb{X}^{*}$ is the optimal trajectory and $Y^{*}$ is the associated observation process. We introduce the adjoint equation as follows, which is a linear conditional mean-field BSDE with regime-switching,
\begin{equation}
		\label{equ:adj equ}
		\left\{\begin{aligned}
			\mathrm{d}p_t=&\   -\left[A^{\top}_{(\theta_t,t)}p_t+\widehat{A}^{\top}_{(\theta_t,t)}\mathbb{E}^{\theta}_t[p_t]+H_{(\theta_t,t)}\mathbb{X}^{*}_t +\widehat{H}_{(\theta_t,t)}\mathbb{E}^{\theta}_t[\mathbb{X}^{*}_t]+G^{\top}_{(\theta_t,t)}u^{*}_t\right.\\
			&\ \left. +\widehat{G}^{\top}_{(\theta_t,t)}\mathbb{E}^{\theta}_t[u^{*}_t]\right]\mathrm{d}t+q_t\mathrm{d}V_t+k_t\bullet\mathrm{d}M_t,\\
			p_T=\ &S_{\theta_T}\mathbb{X}^{*}_T+\widehat{S}_{\theta_T}\mathbb{E}^{\theta}_T[\mathbb{X}^{*}_T].
		\end{aligned}\right.
	\end{equation}
	
	Regarding above equation, we have the following result.
\begin{Lemma}
	\label{lem: well of adj equ}
	Under assumptions \ref{ass:coef in dyna} and \ref{ass: coe in cost one},  BSDE (\ref{equ:adj equ}) admits a unique solution $(p,q,k)\in\mathcal{S}^2_{\mathbb{F}^{Y^*,\theta}}([0,T];\mathbb{R}^{n})\times\mathcal{L}^2_{\mathbb{F}^{Y^*,\theta}}([0,T];\mathbb{R}^{n\times r})\times\mathcal{M}^2_{\mathbb{F}^{Y^*,\theta}}([0,T];\mathbb{R}^{n})$.
\end{Lemma}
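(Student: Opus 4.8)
The plan is to reduce the well-posedness of the conditional mean-field BSDE \eqref{equ:adj equ} to a standard existence-uniqueness result for BSDEs driven by the innovation Brownian motion $V$ and the martingales $\{M^{\ell j}\}$, by first taking conditional expectations given the Markov chain and solving an auxiliary "mean" BSDE, and then plugging its solution back as a known inhomogeneous term into the original equation. First I would take $\mathbb{E}^{\theta}_t[\cdot]$ on both sides of \eqref{equ:adj equ}. Writing $\bar{p}_t=\mathbb{E}^{\theta}_t[p_t]$ and using that $\mathbb{E}^{\theta}_t[\mathbb{X}^*_t]$ and $\mathbb{E}^{\theta}_t[u^*_t]$ are given $\mathbb{F}^{\theta}$-adapted processes (recall $\mathbb{E}^{\theta}_t[\mathbb{X}^*_t]=\mathbb{E}^{\theta}_t[X^*_t]$ and $u^*\in\mathcal{U}$), together with the fact that the $V$-integral and the $M$-integral have zero $\mathbb{F}^{\theta}$-conditional mean (Lemma \ref{lem:N BM}\ref{con:condi expe zero} gives $\mathbb{E}^{\theta}_t[V_t]=0$, and the argument via \cite[Lemma 2.4]{Lv Xiong Zhang 2023} handles the stochastic integrals), one obtains a closed linear BSDE for $(\bar{p},\bar{k})$ driven only by the Markov chain martingales $M^{\ell j}$: its driver is $-[(\check{A}^{\top})\bar{p}_t+\check{H}\,\mathbb{E}^{\theta}_t[\mathbb{X}^*_t]+\check{G}^{\top}\mathbb{E}^{\theta}_t[u^*_t]]$ with terminal condition $\check{S}_{\theta_T}\mathbb{E}^{\theta}_T[\mathbb{X}^*_T]$. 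This BSDE driven by a finite-state Markov chain is linear with bounded coefficients (assumptions \ref{ass:coef in dyna}, \ref{ass: coe in cost one}) and a square-integrable inhomogeneous term, so by the standard theory for BSDEs with respect to the martingales of a finite-state chain (e.g.\ the fixed-point/contraction argument in \cite{Donnelly 2008}, or \cite{Cohen Elliott}) it admits a unique solution $(\bar{p},\bar{k})\in\mathcal{S}^2_{\mathbb{F}^{\theta}}([0,T];\mathbb{R}^n)\times\mathcal{M}^2_{\mathbb{F}^{\theta}}([0,T];\mathbb{R}^n)$.

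Next I would substitute this now-known process $\bar{p}_t=\mathbb{E}^{\theta}_t[p_t]$ into the driver of \eqref{equ:adj equ}. The equation then becomes a \emph{classical} (non-mean-field) linear BSDE on $(\Omega,\mathcal{F},\mathbb{F}^{Y^*,\theta},\mathbb{P})$:
\begin{equation*}
\left\{\begin{aligned}
\mathrm{d}p_t=&\ -\left[A^{\top}_{(\theta_t,t)}p_t+\Psi_t\right]\mathrm{d}t+q_t\mathrm{d}V_t+k_t\bullet\mathrm{d}M_t,\\
p_T=&\ S_{\theta_T}\mathbb{X}^*_T+\widehat{S}_{\theta_T}\mathbb{E}^{\theta}_T[\mathbb{X}^*_T],
\end{aligned}\right.
\end{equation*}
where $\Psi_t=\widehat{A}^{\top}_{(\theta_t,t)}\bar{p}_t+H_{(\theta_t,t)}\mathbb{X}^*_t+\widehat{H}_{(\theta_t,t)}\mathbb{E}^{\theta}_t[\mathbb{X}^*_t]+G^{\top}_{(\theta_t,t)}u^*_t+\widehat{G}^{\top}_{(\theta_t,t)}\mathbb{E}^{\theta}_t[u^*_t]$ is a fixed process in $\mathcal{L}^2_{\mathbb{F}^{Y^*,\theta}}([0,T];\mathbb{R}^n)$ (using $\mathbb{X}^*\in\mathcal{S}^2$, $u^*\in\mathcal{U}$, the bounds in \ref{ass:coef in dyna}--\ref{ass: coe in cost one}, and the just-established $\bar p\in\mathcal S^2$), and the terminal datum is square-integrable and $\mathcal{F}^{Y^*,\theta}_T$-measurable. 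I would then invoke Theorem \ref{The: stoc inte repr}: it guarantees exactly the martingale representation property on $\mathbb{F}^{Y^*,\theta}$ with respect to $V$ and $M$ that is needed to run the usual Picard iteration / martingale-representation proof of existence and uniqueness for such a linear BSDE. Concretely, one applies the representation theorem to the $\mathbb{F}^{Y^*,\theta}$-martingale $t\mapsto\mathbb{E}[p_T+\int_0^T(A^{\top}p_s+\Psi_s)\mathrm{d}s\mid\mathcal{F}^{Y^*,\theta}_t]$ to produce candidate integrands $(q,k)$, and closes the argument by a contraction on $\mathcal{S}^2\times\mathcal{L}^2\times\mathcal{M}^2$ with the standard a priori estimates (Itô's formula applied to $|p_t|^2$, Burkholder–Davis–Gundy, Gronwall), where orthogonality of $V$ and the $M^{\ell j}$ — noted in Section \ref{seq: notations and preliminaries} — keeps the cross terms under control.

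Finally, I would verify consistency: the $(p,q,k)$ thus obtained satisfies $\mathbb{E}^{\theta}_t[p_t]=\bar p_t$, because taking $\mathbb{E}^{\theta}_t[\cdot]$ of the reconstructed equation recovers precisely the auxiliary mean BSDE, which has a unique solution; hence $(p,q,k)$ solves the \emph{original} mean-field equation \eqref{equ:adj equ}, and uniqueness for \eqref{equ:adj equ} follows from uniqueness of the mean BSDE together with uniqueness of the classical BSDE once $\bar p$ is fixed. The main obstacle I anticipate is not the linear BSDE theory itself but making the decoupling step fully rigorous: one must justify interchanging $\mathbb{E}^{\theta}_t[\cdot]$ with the $\mathrm{d}t$-integral and, more delicately, argue that the conditional expectations of the $V$- and $M$-stochastic integrals vanish (so that the mean equation is genuinely closed) — this requires the tower property together with $\mathbb{E}^{\theta}_t[V_t]=0$ and the analogous property for $\int_0^t k_s\bullet\mathrm{d}M_s$, and care is needed because $V$ and $M$ are only $\mathbb{F}^{Y^*,\theta}$-martingales, not $\mathbb{F}^{\theta}$-martingales. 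A clean way to sidestep part of this is to run the contraction directly on the coupled system $(p,\bar p,q,k,\bar k)$, treating $\bar p$ as an independent variable constrained by its own BSDE, which avoids ever having to assert the vanishing of conditional stochastic integrals for the unknown $(q,k)$ and instead only uses it for the already-constructed solution.
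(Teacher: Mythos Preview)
Your argument is correct but takes a genuinely different route from the paper. The paper does \emph{not} decouple: it treats the conditional mean-field term $\mathbb{E}^{\theta}_t[p_t]$ as part of the generator, writes $g(t,p,q,\widehat{p},\widehat{q})$ with $(\widehat{p},\widehat{q})$ standing for $(\mathbb{E}^{\theta}_t[p_t],\mathbb{E}^{\theta}_t[q_t])$, and runs a single Picard iteration on the product space $\mathcal{S}^2\times\mathcal{L}^2\times\mathcal{M}^2$. Given $(\widetilde p,\widetilde q,\widetilde k)$, the paper applies Theorem~\ref{The: stoc inte repr} to the martingale $G_t=\mathbb{E}^{Y^*,\theta}_t\big[\rho+\int_0^T g(s,\widetilde p_s,\widetilde q_s,\mathbb{E}^{\theta}_s[\widetilde p_s],\mathbb{E}^{\theta}_s[\widetilde q_s])\,\mathrm{d}s\big]$ to produce $(q,k)$, defines $p$ accordingly, and shows the map $\varTheta$ is a contraction by appealing to \cite[Theorem~3.4]{Nguyen 2020}. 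Your two-step scheme (solve the $\mathbb{F}^{\theta}$-BSDE for $\bar p$, then a classical $\mathbb{F}^{Y^*,\theta}$-BSDE) is also valid and has the conceptual appeal of isolating the mean dynamics, but---as you yourself flag---it forces you to justify that $\mathbb{E}^{\theta}_t\big[\int_0^t q_s\,\mathrm{d}V_s\big]=0$ and to identify $\mathbb{E}^{\theta}_t\big[\int_0^t k_s\bullet\mathrm{d}M_s\big]$ for \emph{unknown} integrands, which is exactly the delicate point the paper's direct contraction avoids. Your final suggestion of running the contraction on the coupled system $(p,\bar p,q,k,\bar k)$ is in spirit the paper's approach collapsed into one map, so you have essentially rediscovered it as the clean way out.
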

\begin{proof}
	For simplicity, we use $g(t,p,q,\widehat{p},\widehat{q})= A^{\top}_{(\theta_t,t)}p+\widehat{A}^{\top}_{(\theta_t,t)}\widehat{p}+H_{(\theta_t,t)}\mathbb{X}^{*}_t+\widehat{H}_{(\theta_t,t)}\mathbb{E}^{\theta}_t[\mathbb{X}^{*}_t]+G^{\top}_{(\theta_t,t)}u^{*}_t+\widehat{G}^{\top}_{(\theta_t,t)}\mathbb{E}^{\theta}_t[u^{*}_t]$ and $\rho=S_{\theta_T}\mathbb{X}^{*}_T+\widehat{S}_{\theta_T}\mathbb{E}^{\theta}_T[\mathbb{X}^{*}_T]$ to denote the generator and terminal value of equation (\ref{equ:adj equ}), respectively. Under assumptions  \ref{ass:coef in dyna} and \ref{ass: coe in cost one}, it can be verified that $g(t,p,q,\widehat{p},\widehat{q})$ is adapted to $\mathcal{F}^{Y^*,\theta}_t$,  uniformly Lipschitz with respect to $(p,q,\widehat{p},\widehat{q})$, and satisfies $g(t,0,0,0,0)\in\mathcal{L}^2_{\mathbb{F}^{Y^*,\theta}}([0,T];\mathbb{R}^n)$. Moreover, $\rho$ is $\mathcal{F}^{Y^*,\theta}_T$-measurable and square integrable.

	For given processes $\widetilde{p}\in\mathcal{S}^2_{\mathbb{F}^{Y^*,\theta}}([0,T];\mathbb{R}^{n})$ and  $\widetilde{q}\in\mathcal{L}^2_{\mathbb{F}^{Y^*,\theta}}([0,T];\mathbb{R}^{n\times r})$. Introduce $G=\{G_t\}_{t\in[0,T]}$ by $G_t=\mathbb{E}^{Y^*,\theta}_t[\rho+\int_{0}^{T}g(t,\widetilde{p}_t,\widetilde{q}_t,\mathbb{E}^{\theta}_t[\widetilde{p}_t],\mathbb{E}^{\theta}_t[\widetilde{q}_t])\mathrm{d}t]$. Obviously, $G$ is  a  square integrable $\mathbb{F}^{Y^*,\theta}$-martingale. It follows from Theorem \ref{The: stoc inte repr} that there exist processes $q\in\mathcal{L}^2_{\mathbb{F}^{Y^*,\theta}}([0,T];\mathbb{R}^{n\times r})$ and $k\in\mathcal{M}^2_{\mathbb{F}^{Y^*,\theta}}([0,T];\mathbb{R}^{n})$ such that $G_t=G_0+\int_{0}^{t}q_s\mathrm{d}V_s+\int_{0}^{t}k_s\bullet\mathrm{d}M_s$.

	Let $	p_t=G_t-\int_{0}^{t}g(s,\widetilde{p}_s,\widetilde{q}_s,\mathbb{E}^{\theta}_s[\widetilde{p}_s],\mathbb{E}^{\theta}_s[\widetilde{q}_s])\mathrm{d}s$. Then we have
	\begin{equation}
		\label{equ:exis of p}
		\begin{aligned}
			p_t=&\ G_0+\int_{0}^{t}q_s\mathrm{d}V_s+\int_{0}^{t}k_s\bullet\mathrm{d}M_s-\int_{0}^{t}g\left(s,\widetilde{p}_s,\widetilde{q}_s,\mathbb{E}^{\theta}_s[\widetilde{p}_s],\mathbb{E}^{\theta}_s[\widetilde{q}_s]\right)\mathrm{d}s\\
			=&\ \rho+\int_{t}^{T}g\left(s,\widetilde{p}_s,\widetilde{q}_s,\mathbb{E}^{\theta}_s[\widetilde{p}_s],\mathbb{E}^{\theta}_s[\widetilde{q}_s]\right)\mathrm{d}s
		 -\int_{t}^{T}q_s\mathrm{d}V_s-\int_{t}^{T}k_s\bullet\mathrm{d}M_s.
		\end{aligned}
	\end{equation}By virtue of (\ref{equ:exis of p}), we can constitute a mapping $(p,q,k)=\varTheta((\widetilde{p},\widetilde{q},\widetilde{k}))$ from Banach space $\mathcal{S}^2_{\mathbb{F}^{Y^*,\theta}}([0,T];\mathbb{R}^{n})\times\mathcal{L}^2_{\mathbb{F}^{Y^*,\theta}}([0,T];\mathbb{R}^{n\times r})\times\mathcal{M}^2_{\mathbb{F}^{Y^*,\theta}}([0,T];\mathbb{R}^{n})$ to itself. Following  argument similar to those in \cite[Theorem 3.4]{Nguyen 2020}, we can show that $\varTheta$ is a contraction mapping. Therefore, by the Banach fixed-point theorem, there exists a unique solution of (\ref{equ:adj equ}).
\end{proof}

With the help of adjoint equation (\ref{equ:adj equ}), a  necessary  and sufficient condition of optimality is provided.
\begin{Lemma}
	\label{the:openloop char}
	Under assumptions \ref{ass:coef in dyna}, \ref{ass: coe in cost one}, and \ref{ass: coe in cost two}, if $u^{*}$ is an optimal control of \textbf{(FO-CMF)}, $\mathbb{X}^{*}$ is the optimal trajectory, then the following maximum condition holds:
	\begin{equation}
		\label{equ:coupl condi}
		\begin{aligned}
			&\ G_{(\theta_t,t)}\mathbb{X}^{*}_t+\widehat{G}_{(\theta_t,t)}\mathbb{E}^{\theta}_t[\mathbb{X}^{*}_t]+B^{\top}_{(\theta_t,t)}p_t+\widehat{B}^{\top}_{(\theta_t,t)}\mathbb{E}^{\theta}_t[p_t]
			+ R_{(\theta_t,t)}u^{*}_t+\widehat{R}_{(\theta_t,t)}\mathbb{E}^{\theta}_t[u^{*}_t]=0,\hspace{1cm} t\in[0,T],
		\end{aligned}
	\end{equation}where $(p,q,k)$ is the solution of (\ref{equ:adj equ}).

	Conversely, if $(\mathbb{X}^*,u^{*},p,q,k)$ satisfies  (\ref{equ: rewr dyna}), (\ref{equ:adj equ}), and (\ref{equ:coupl condi}), then $u^{*}$ is an optimal control of \textbf{(FO-CMF)}.
\end{Lemma}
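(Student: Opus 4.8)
The plan is to treat this as a convex optimal control problem and derive the maximum condition \eqref{equ:coupl condi} via a standard convex variation (Gâteaux derivative) argument, using assumptions \ref{ass: coe in cost two} to guarantee strict convexity of $v\mapsto J^{F}(\xi,\ell_0;v)$ over $\mathcal{U}_{ad}$. First I would observe that, by Lemma \ref{lem:equi bet con on two sets} and Lemma \ref{lem:filtr ident}, the admissible set $\mathcal{U}_{ad}$ is a convex subspace (it coincides with $\mathcal{U}$, which is a linear space), and that the map $v\mapsto\mathbb{X}^{v}$ is affine because \eqref{equ: rewr dyna} is linear in $(\mathbb{X}^{v},v)$ with a control-independent diffusion $Q_t\,\mathrm{d}V_t$; writing $\mathbb{X}^{v_1}-\mathbb{X}^{v_2}=Z^{v_1-v_2}$ where $Z^{w}$ solves the same SDE with zero initial condition, zero diffusion, and inhomogeneous term $B w+\widehat{B}\mathbb{E}^{\theta}_t[w]$. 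Consequently $J^{F}$ is a quadratic functional of $v$, and under \ref{ass: coe in cost two} (positive-definiteness of $R,\check R$ together with the smallness of $G,\check G$ relative to $\delta_1\delta_2$) it is strictly convex and coercive on $\mathcal{U}_{ad}$.

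Next I would compute the first variation. For $u^{*}\in\mathcal{U}_{ad}$ and any $v\in\mathcal{U}_{ad}$, set $u^\varepsilon=u^{*}+\varepsilon(v-u^{*})$, with corresponding state $\mathbb{X}^\varepsilon=\mathbb{X}^{*}+\varepsilon Z$, where $Z=Z^{v-u^{*}}$. Expanding $J^{F}(\xi,\ell_0;u^\varepsilon)$ and differentiating at $\varepsilon=0$ gives
\begin{equation*}
\begin{aligned}
\left.\frac{\mathrm{d}}{\mathrm{d}\varepsilon}J^{F}\right|_{\varepsilon=0}
=\mathbb{E}\Bigl\{&\int_0^T\bigl[\langle H\mathbb{X}^{*}+G^{\top}u^{*},Z\rangle+\langle G\mathbb{X}^{*}+Ru^{*},v-u^{*}\rangle\\
&\quad+\langle\widehat H\mathbb{E}^{\theta}_t[\mathbb{X}^{*}]+\widehat G^{\top}\mathbb{E}^{\theta}_t[u^{*}],\mathbb{E}^{\theta}_t[Z]\rangle+\langle\widehat G\mathbb{E}^{\theta}_t[\mathbb{X}^{*}]+\widehat R\mathbb{E}^{\theta}_t[u^{*}],\mathbb{E}^{\theta}_t[v-u^{*}]\rangle\bigr]\mathrm{d}t\\
&\quad+\langle S_{\theta_T}\mathbb{X}^{*}_T+\widehat S_{\theta_T}\mathbb{E}^{\theta}_T[\mathbb{X}^{*}_T],Z_T\rangle\Bigr\}.
\end{aligned}
\end{equation*}
Now I would apply Itô's formula to $\langle p_t,Z_t\rangle$ using the adjoint equation \eqref{equ:adj equ} and the $Z$-dynamics. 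The diffusion term $q_t\,\mathrm{d}V_t+k_t\bullet\mathrm{d}M_t$ of $p$ contributes no cross-term since $Z$ has no martingale part, and the drift terms are arranged precisely so that all the $\langle H\mathbb{X}^{*},Z\rangle$, $\langle G^{\top}u^{*},Z\rangle$, $\langle\widehat H\cdots\rangle$ terms cancel against $\langle p_T,Z_T\rangle=\langle S_{\theta_T}\mathbb{X}^{*}_T+\widehat S_{\theta_T}\mathbb{E}^{\theta}_T[\mathbb{X}^{*}_T],Z_T\rangle$; here I use the identity $\mathbb{E}\langle\widehat A^{\top}\mathbb{E}^{\theta}_t[p_t],Z_t\rangle=\mathbb{E}\langle\mathbb{E}^{\theta}_t[p_t],\widehat A Z_t\rangle=\mathbb{E}\langle\mathbb{E}^{\theta}_t[p_t],\widehat A\,\mathbb{E}^{\theta}_t[Z_t]\rangle$ and similarly for $\widehat H,\widehat G$, which is the standard trick for handling conditional mean-field adjoint terms. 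What survives is
\begin{equation*}
\left.\frac{\mathrm{d}}{\mathrm{d}\varepsilon}J^{F}\right|_{\varepsilon=0}
=\mathbb{E}\int_0^T\langle G\mathbb{X}^{*}_t+\widehat G\mathbb{E}^{\theta}_t[\mathbb{X}^{*}_t]+B^{\top}p_t+\widehat B^{\top}\mathbb{E}^{\theta}_t[p_t]+Ru^{*}_t+\widehat R\mathbb{E}^{\theta}_t[u^{*}_t],\,v_t-u^{*}_t\rangle\,\mathrm{d}t.
\end{equation*}
Because $\mathcal{U}_{ad}$ is a linear space, optimality of $u^{*}$ forces this to vanish for all $v$, and since the integrand bracket is $\mathbb{F}^{Y^{*},\theta}$-adapted while $v-u^{*}$ ranges over all such processes, a standard argument (take $v-u^{*}$ equal to the bracket itself) yields \eqref{equ:coupl condi}, first $\mathrm{d}t\otimes\mathrm{d}\mathbb{P}$-a.e.\ and then for all $t$ by continuity. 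For the converse, strict convexity plus the fact that the first variation vanishes identically when \eqref{equ:coupl condi} holds gives $J^{F}(\xi,\ell_0;v)\ge J^{F}(\xi,\ell_0;u^{*})$ for every $v$, so $u^{*}$ is optimal.

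The main obstacle is the careful bookkeeping in the Itô expansion of $\langle p_t,Z_t\rangle$ together with the conditional-mean-field manipulations: one must verify that every conditional-expectation cross term $\langle\widehat A^{\top}\mathbb{E}^{\theta}_t[p_t],Z_t\rangle$ etc.\ can legitimately be rewritten using $\mathbb{E}\langle\mathbb{E}^{\theta}_t[X],Y\rangle=\mathbb{E}\langle\mathbb{E}^{\theta}_t[X],\mathbb{E}^{\theta}_t[Y]\rangle$, which requires that $Z$ and $p$ be integrable enough and that the relevant filtrations nest correctly ($\mathbb{F}^{\theta}\subseteq\mathbb{F}^{Y^{*},\theta}$), and that the local martingale terms arising from $q\,\mathrm{d}V$ and $k\bullet\mathrm{d}M$ are true martingales so that their expectations vanish — this last point uses the integrability classes $\mathcal{L}^2$ and $\mathcal{M}^2$ from Lemma \ref{lem: well of adj equ}. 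A secondary technical point is justifying the interchange of $\frac{\mathrm{d}}{\mathrm{d}\varepsilon}$ and $\mathbb{E}\int_0^T$, which follows from the linear dependence of $\mathbb{X}^\varepsilon$ on $\varepsilon$ and dominated convergence. All of this is routine once the structure is set up, and I would cite \cite[Theorem 3.4]{Nguyen 2020} or the analogous conditional mean-field maximum principle for the detailed estimates rather than reproduce them.
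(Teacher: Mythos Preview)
Your proposal is correct and follows essentially the same route as the paper: compute the G\^ateaux derivative of $J^{F}$ at $u^{*}$, apply It\^o's formula to $\langle p_t, Z_t\rangle$ (the paper writes $\langle\widetilde{\mathbb{X}}_t,p_t\rangle$) to convert the state-dependent terms into the control-dependent bracket, and conclude \eqref{equ:coupl condi} from the linearity of the admissible set, with sufficiency following from convexity of the cost. One small imprecision: $\mathcal{U}_{ad}$ does not literally coincide with $\mathcal{U}$---the paper instead invokes Lemma~\ref{lem:equi bet con on two sets} to transfer the minimization to the linear space $\mathcal{U}$ and then varies there, which is what your argument actually needs.
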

\begin{proof}
	From Theorem \ref{the: equi betw two pro} and Lemma \ref{lem:equi bet con on two sets}, we derive that $u^*$ also minimizes  $J^{F}\left(\xi,\ell_0;v\right)$ over $\mathcal{U}$. For any admissible control $\widetilde{v}\in\mathcal{U}$, introduce the following variation  equation:
	\begin{equation*}
		\left\{\begin{aligned}
			\mathrm{d}\widetilde{\mathbb{X}}_t=&\ \left[A\widetilde{\mathbb{X}}_t+\widehat{A}\mathbb{E}^{\theta}_t[\widetilde{\mathbb{X}}_t]+B\widetilde{v}_t+\widehat{B}\mathbb{E}^{\theta}_t[\widetilde{v}_t]\right]\mathrm{d}t,\\
			\widetilde{\mathbb{X}}_0=&\ 0.
		\end{aligned}\right.
	\end{equation*}Then we can derive that the G$\hat{\text{a}}$teaux derivative  of $J^{F}\left(\xi,\ell_0;\cdot\right)$ at $u^{*}$   along the direction  $\widetilde{v}$ is 
	\begin{equation}
		\label{equ:vari inequ}
		\begin{aligned}
			\frac{\mathrm{d}J^{F}\left(\xi,\ell_0;u^{*}+\epsilon \widetilde{v}\right)}{\mathrm{d}\epsilon}\Big|_{\epsilon=0}
			=&\ \mathbb{E}\left\{\int_{0}^{T}\left[\left\langle\widetilde{\mathbb{X}}, H\mathbb{X}^{*}+G^{\top}u^{*}\right\rangle+\left\langle\widetilde{v}, G\mathbb{X}^{*}+Ru^{*}\right\rangle +\left\langle\mathbb{E}^{\theta}_t[\widetilde{\mathbb{X}}], \widehat{H}\mathbb{E}^{\theta}_t[\mathbb{X}^{*}]+\widehat{G}^{\top}\mathbb{E}^{\theta}_t[u^{*}]\right\rangle\right.\right.\\
			&\ \left.\left.+\left\langle\mathbb{E}^{\theta}_t[\widetilde{v}], \widehat{G}\mathbb{E}^{\theta}_t[\mathbb{X}^{*}]+\widehat{R}\mathbb{E}^{\theta}_t[u^{*}]\right\rangle\right]\mathrm{d}t +\left\langle\widetilde{\mathbb{X}}_T, S_{\theta_T}\mathbb{X}^{*}_T\right\rangle+\left\langle\mathbb{E}^{\theta}_T[\widetilde{\mathbb{X}}_T], \widehat{S}_{\theta_T}\mathbb{E}^{\theta}_T[\mathbb{X}^{*}_T]\right\rangle\right\}.
		\end{aligned}
	\end{equation}

	In view of  Lemma \ref{lem: well of adj equ}, applying It$\widehat{\text{o}}$'s formula for semi-martingale  to $\langle\widetilde{\mathbb{X}}_t,p_t\rangle$ and taking expectation, we obtain  
	\begin{equation}
		\label{equ:adjo ident}
		\begin{aligned}
			&\ \mathbb{E}\left\{\left\langle\widetilde{\mathbb{X}}_T, S_{\theta_T}\mathbb{X}^{*}_T\right\rangle+\left\langle\mathbb{E}^{\theta}_T[\widetilde{\mathbb{X}}_T], \widehat{S}_{\theta_T}\mathbb{E}^{\theta}_T[\mathbb{X}^{*}_T]\right\rangle\right\}\\
			=&\ \mathbb{E}\left\{\int_{0}^{T}\left[-\left\langle\widetilde{\mathbb{X}}, H\mathbb{X}^{*}+G^{\top}u^{*}\right\rangle+\left\langle\mathbb{E}^{\theta}_t[\widetilde{v}], \widehat{B}^{\top}p\right\rangle -\left\langle\mathbb{E}^{\theta}_t[\widetilde{\mathbb{X}}], \widehat{H}\mathbb{E}^{\theta}_t[\mathbb{X}^{*}]+\widehat{G}^{\top}\mathbb{E}^{\theta}_t[u^{*}]\right\rangle+\left\langle \widetilde{v},B^{\top}p\right\rangle\right]\mathrm{d}t\right\}.
		\end{aligned}
	\end{equation}Substituting (\ref{equ:adjo ident}) into (\ref{equ:vari inequ}) yields
	\begin{equation}
		\label{equ:zero vari}
		\begin{aligned}
			\frac{\mathrm{d}J^{F}\left(\xi,\ell_0;u^{*}+\epsilon \widetilde{v}\right)}{\mathrm{d}\epsilon}\Big|_{\epsilon=0}
			=&\ \mathbb{E}\left\{\int_{0}^{T}\left\langle\widetilde{v}, G\mathbb{X}^{*}+\widehat{G}\mathbb{E}^{\theta}_t[\mathbb{X}^{*}]+B^{\top}p+\widehat{B}^{\top}\mathbb{E}^{\theta}_t[p] +Ru^{*}+\widehat{R}\mathbb{E}^{\theta}_t[u^{*}]\right\rangle \mathrm{d}t\right\}.
		\end{aligned}
	\end{equation}Since $u^{*}$ is an optimal  control, the G$\hat{\text{a}}$teaux derivative  of cost functional  is $0$. Therefore, combining (\ref{equ:zero vari}) and the arbitrariness of $\widetilde{v}$ gives identity  (\ref{equ:coupl condi}). 

	To prove  sufficiency, we observe  from (\ref{equ:zero vari}) that if $(\mathbb{X}^*,u^{*},p,q,k)$ satisfies  (\ref{equ: rewr dyna}), (\ref{equ:adj equ}), and (\ref{equ:coupl condi}), then the G$\hat{\text{a}}$teaux derivative  of the  cost functional  is $0$. Moreover, under assumptions \ref{ass: coe in cost one} and \ref{ass: coe in cost two}, the cost functional is uniformly convex . Therefore, $u^{*}$ is an optimal control.
\end{proof}

Lemma (\ref{the:openloop char}) provides a necessary and sufficient condition for optimality. Specifically, equations (\ref{equ: rewr dyna}), (\ref{equ:adj equ}), and (\ref{equ:coupl condi})  are referred to as a Hamiltonian system, which is  a  fully coupled conditional mean-field forward and backward stochastic differential equation (FBSDE),  with the coupling structure derived from (\ref{equ:coupl condi}). However, solving such an  FBSDE is often computationally demanding and may be impractical for implementation. In many applications, it is preferable to express the optimal control in a feedback form with respect to the state process. To this end, we obtain the following result.

\begin{Theorem}\label{lem:opti cont of CMF-SDE}
 Suppose that  assumptions \ref{ass:coef in dyna}, \ref{ass: coe in cost one}, and \ref{ass: coe in cost two} are fulfilled.   Then the optimal control  $u^{*}$ of \textbf{(FO-CMF)} admits the following  feedback representation:
 \begin{equation}
			\label{equ: opti cont of JLQ}
			\begin{aligned}
				u^{*}_t=&\ -R_{(\theta_t,t)}^{-1}\widetilde{B}_{(\theta_t,t)}\left(\mathbb{X}^{*}_t-\mathbb{E}^{\theta}_t[\mathbb{X}^{*}_t]\right) -\check{R}_{(\theta_t,t)}^{-1}\left(	\widetilde{\check{B}}_{(\theta_t,t)}\mathbb{E}^{\theta}_t[\mathbb{X}^{*}_t]+\mathbb{B}_{(\theta_t,t)}\right),
			\end{aligned}	
	\end{equation}where
	\begin{equation*}
		\begin{aligned}
 			\widetilde{B}_{(\theta_t,t)}=&\ B^{\top}_{(\theta_t,t)}\Lambda_{(\theta_t,t)}+G_{(\theta_t,t)},\\ 
			\widetilde{\check{B}}_{(\theta_t,t)}=&\ \check{B}^{\top}_{(\theta_t,t)}\Gamma_{(\theta_t,t)}+\check{G}_{(\theta_t,t)},\\
			\mathbb{B}_{(\theta_t,t)}=&\ \check{B}^{\top}_{(\theta_t,t)}\phi_{(\theta_t,t)},
		\end{aligned}
	\end{equation*}and $\Lambda_{(\ell,t)}$, $\Gamma_{(\ell,t)}$, and $\phi_{(\ell,t)}$ are solutions of  (\ref{equ: ricca 1})-(\ref{equ: first order ode}). 

	The optimal cost  of \textbf{(FO-CMF)} is 
	\begin{equation}
		\label{equ:opti ccost of CMF-LQ}
		\begin{aligned}
		  \mathscr{V}^F\left(\xi,\ell_0\right)=&\ \frac{1}{2}\mathbb{E}\left\{\int_{0}^{T}\left[-|\check{R}^{-\frac{1}{2}}\mathbb{B}|^2+\mathrm{tr}\left(Q^{\top}\Lambda Q\right) +2\left\langle\phi,b\right\rangle\right]\mathrm{d}t+\left\langle \mu,\Gamma_{(\ell_0,0)}\mu+2\phi_{(\ell_0,0)}\right\rangle\right\}.	
		\end{aligned}
	\end{equation}

\end{Theorem}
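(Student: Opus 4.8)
The plan is to decouple the Hamiltonian system consisting of the forward filter dynamics \eqref{equ: rewr dyna}, the backward adjoint equation \eqref{equ:adj equ}, and the stationarity condition \eqref{equ:coupl condi} via the four-step scheme, treating the state and its conditional mean on separate footings. First I would introduce the ansatz
\begin{equation*}
	p_t = \Lambda_{(\theta_t,t)}\bigl(\mathbb{X}^{*}_t - \mathbb{E}^{\theta}_t[\mathbb{X}^{*}_t]\bigr) + \Gamma_{(\theta_t,t)}\mathbb{E}^{\theta}_t[\mathbb{X}^{*}_t] + \phi_{(\theta_t,t)},
\end{equation*}
where $\Lambda_{(\ell,\cdot)},\Gamma_{(\ell,\cdot)}$ are $\mathbb{S}^n$-valued and $\phi_{(\ell,\cdot)}$ is $\mathbb{R}^n$-valued, all to be determined. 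Taking $\mathbb{E}^{\theta}_t[\cdot]$ in \eqref{equ: rewr dyna}, \eqref{equ:adj equ} (using that the $V$- and $M$-integrals have zero $\mathbb{F}^{\theta}$-conditional mean, by Theorem~\ref{the:filter of X}\ref{asser two in main filter} and the martingale property of $M$), and \eqref{equ:coupl condi} yields a decoupled ``mean system'' that should produce the equation \eqref{equ: ricca 2} for $\Gamma$ and the linear ODE \eqref{equ: first order ode} for $\phi$. Subtracting the mean part from the full system then isolates the centered process $\mathbb{X}^{*}_t - \mathbb{E}^{\theta}_t[\mathbb{X}^{*}_t]$, whose driver is the nondegenerate Brownian $Q_t\,\mathrm{d}V_t$, and matching the It\^o differential of $p_t$ against \eqref{equ:adj equ} will force $\Lambda$ to satisfy the Riccati equation \eqref{equ: ricca 1}. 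From the stationarity condition \eqref{equ:coupl condi}, after substituting the ansatz and inverting $R_{(\theta_t,t)}$ and $\check R_{(\theta_t,t)}$ (invertible by \ref{ass: coe in cost two}), one reads off the feedback law \eqref{equ: opti cont of JLQ} with the stated $\widetilde B$, $\widetilde{\check B}$, $\mathbb{B}$; since \eqref{equ:coupl condi} together with \eqref{equ: rewr dyna} and \eqref{equ:adj equ} is also sufficient by Lemma~\ref{the:openloop char}, verifying that the so-constructed $(\mathbb{X}^{*},u^{*},p,q,k)$ solves the Hamiltonian system establishes optimality.

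There is one extra subtlety relative to the classical Markovian LQ case: because the coefficients are modulated by $\theta$, applying It\^o's formula to $\Lambda_{(\theta_t,t)}(\mathbb{X}^{*}_t-\mathbb{E}^{\theta}_t[\mathbb{X}^{*}_t])$ (and similarly for the $\Gamma$ and $\phi$ terms) generates jump terms of the form $\sum_{\ell,j}(\Lambda_{(j,t)}-\Lambda_{(\ell,t)})(\cdots)\,\mathrm{d}M^{\ell j}_t$ plus the compensator contribution $\sum_j \pi^{\theta_t j}(\Lambda_{(j,t)}-\Lambda_{(\theta_t,t)})(\cdots)\,\mathrm{d}t$. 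Thus the Riccati systems \eqref{equ: ricca 1}--\eqref{equ: ricca 2} are \emph{coupled across regimes} through the generator $\Pi$, and the martingale part $k_t\bullet\mathrm{d}M_t$ in the adjoint equation is pinned down by the jump coefficients $k^{\ell j}_t = (\Lambda_{(j,t)}-\Lambda_{(\ell,t)})(\mathbb{X}^{*}_t-\mathbb{E}^{\theta}_t[\mathbb{X}^{*}_t]) + (\Gamma_{(j,t)}-\Gamma_{(\ell,t)})\mathbb{E}^{\theta}_t[\mathbb{X}^{*}_t] + (\phi_{(j,t)}-\phi_{(\ell,t)})$, while $q_t = \Lambda_{(\theta_t,t)}Q_t$ (noting $\mathbb{E}^{\theta}_t[\mathbb{X}^{*}_t]$ contributes no $V$-integral). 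Well-posedness of \eqref{equ: ricca 1} (the $\Lambda$-Riccati) follows from the standard a priori-bound/monotonicity argument for symmetric matrix Riccati equations under the coercivity in \ref{ass: coe in cost two}; \eqref{equ: ricca 2} for $\Gamma$ and the linear ODE \eqref{equ: first order ode} for $\phi$ are then solvable in turn, the latter being linear once $\Gamma$ and $\Lambda$ are in hand.

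For the optimal cost formula \eqref{equ:opti ccost of CMF-LQ}, the plan is to apply It\^o's formula for semimartingales to
\begin{equation*}
	\bigl\langle \mathbb{X}^{*}_t-\mathbb{E}^{\theta}_t[\mathbb{X}^{*}_t],\ \Lambda_{(\theta_t,t)}(\mathbb{X}^{*}_t-\mathbb{E}^{\theta}_t[\mathbb{X}^{*}_t])\bigr\rangle + \bigl\langle \mathbb{E}^{\theta}_t[\mathbb{X}^{*}_t],\ \Gamma_{(\theta_t,t)}\mathbb{E}^{\theta}_t[\mathbb{X}^{*}_t]\bigr\rangle + 2\bigl\langle \phi_{(\theta_t,t)},\ \mathbb{E}^{\theta}_t[\mathbb{X}^{*}_t]\bigr\rangle
\end{equation*}
from $0$ to $T$, taking expectations, and using the Riccati/ODE identities \eqref{equ: ricca 1}--\eqref{equ: first order ode} to cancel the running cost against the quadratic and linear drift terms; the orthogonality $\mathbb{E}\langle \mathbb{X}^{*}_t-\mathbb{E}^{\theta}_t[\mathbb{X}^{*}_t],\mathbb{E}^{\theta}_t[\mathbb{X}^{*}_t]\rangle=0$ keeps the cross terms out, the $M$-martingale terms have zero expectation, and the Brownian term $Q_t\,\mathrm{d}V_t$ in the centered equation produces the $\mathrm{tr}(Q^{\top}\Lambda Q)$ contribution via It\^o's isometry, while the feedback form of $u^{*}$ turns the quadratic-in-$u^{*}$ pieces into the $-|\check R^{-1/2}\mathbb{B}|^2$ term. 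I expect the main obstacle to be bookkeeping the regime-jump terms correctly — in particular ensuring the $\Pi$-coupling that appears from the $\mathrm{d}M$-compensators is exactly absorbed into \eqref{equ: ricca 1}--\eqref{equ: first order ode} as written (so that no residual $\theta$-dependent drift is left over) and verifying the integrability needed to discard the local-martingale parts, for which the bounds in \ref{ass:coef in dyna}--\ref{ass: coe in cost two} together with the $\mathcal{S}^2$/$\mathcal{L}^2$/$\mathcal{M}^2$ estimates of Lemma~\ref{lem: well of adj equ} and Theorem~\ref{the:filter of X} suffice.
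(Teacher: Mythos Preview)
Your proposal is correct and follows essentially the same approach as the paper: the same ansatz for $p_t$, the same decoupling into mean and centered parts via It\^o's formula for Markov-modulated processes (producing the regime-coupled Riccati systems and the explicit $q_t,k^{\ell j}_t$), and the same It\^o-formula computation on the three quadratic/linear forms to extract the optimal cost. The only organizational difference is that the paper carries out the cost computation for an arbitrary admissible $v$ and arranges it as a completion-of-squares inequality, which re-derives optimality and \eqref{equ:opti ccost of CMF-LQ} in one stroke, whereas you invoke sufficiency from Lemma~\ref{the:openloop char} separately and then evaluate along $u^{*}$; the underlying calculation is the same.
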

\begin{proof}Inspired by the terminal value  $p_T$, we make the ansatz
\begin{equation}
	\label{equ: ansatz of p}
	\begin{aligned}
		p_t=\Lambda_{(\theta_t,t)}\left(\mathbb{X}^{*}_t-\mathbb{E}^{\theta}_t[\mathbb{X}^{*}_t]\right)+\Gamma_{(\theta_t,t)}\mathbb{E}^{\theta}_t[\mathbb{X}^{*}_t]+\phi_{(\theta_t,t)},
	\end{aligned}
\end{equation}where $\Lambda_{(\ell,t)}$, $\Gamma_{(\ell,t)}$, and $\phi_{(\ell,t)}$ are deterministic functions satisfying boundary conditions $\Lambda_{(\ell,T)}=S_{\ell}$, $ \Gamma_{(\ell,T)}=\check{S}_{\ell}$, and  $\phi_{(\ell,T)}=0$, for any $\ell\in\mathbb{D}$.

Taking $\mathbb{E}^{\theta}_t[\cdot]$ on both sides of (\ref{equ: rewr dyna}) and utilizing conclusion \ref{asser two in main filter}  in Theorem  \ref{the:filter of X}, we obtain  
\begin{equation}
	\label{equ:sde for condi x}
	\left\{\begin{aligned}
		\mathrm{d}\mathbb{E}^{\theta}_t[\mathbb{X}^{*}_t]=&\ \left[\check{A}_{(\theta_t,t)}\mathbb{E}^{\theta}_t[\mathbb{X}^{*}_t]+\check{B}_{(\theta_t,t)}\mathbb{E}^{\theta}_t[u^{*}_t]\right]\mathrm{d}t,\\
		\mathbb{E}^{\theta}_{0}[\mathbb{X}^{*}_0]=&\ \mu.
	\end{aligned}\right.
\end{equation}Utilizing (\ref{equ:sde for condi x}) and  applying It$\widehat{\text{o}}$'s formula for Markov modulated process to the right-hand  side of (\ref{equ: ansatz of p}), it follows that 
\begin{equation}
	\label{equ:ito for ana}
	\begin{aligned}
		\mathrm{d}p
		 =&\  \Big[\Big(\dot{\Lambda}+\sum_{j=1}^{D}\pi_{\theta_t j}\Lambda_{(j,t)}\Big)\left(\mathbb{X}^{*}-\mathbb{E}^{\theta}_{t}[\mathbb{X}^{*}]\right)
		+\Lambda A\left(\mathbb{X}^{*}-\mathbb{E}^{\theta}_{t}[\mathbb{X}^{*}]\right)+\Lambda B\left(u^{*}-\mathbb{E}^{\theta}_{t}[u^{*}]\right)\\
		&\ +	\Big(\dot{\Gamma}+\sum_{j=1}^{D}\pi_{\theta_t j}\Gamma_{(j,t)}\Big)\mathbb{E}^{\theta}_{t}[\mathbb{X}^{*}]
		+\Gamma\check{A}\mathbb{E}^{\theta}_{t}[\mathbb{X}^{*}]+\Gamma)\check{B}\mathbb{E}^{\theta}_{t}[u^{*}] +\dot{\phi}+\sum_{j=1}^{D}\pi_{\theta_t j}\phi_{(j,t)}\Big]\mathrm{d}t\\
		&\ +\Lambda Q\mathrm{d}V +\sum_{\ell,j=1}^{D}\left[\left(\Lambda_{(\ell,t)}-\Lambda_{(j,t)}\right)\left(\mathbb{X}^{*}-\mathbb{E}^{\theta}_{t}[\mathbb{X}^{*}]\right)+\left(\Gamma_{(\ell,t)}-\Gamma_{(j,t)}\right)\mathbb{E}^{\theta}_{t}[\mathbb{X}^{*}]+\phi_{(\ell,t)}-\phi_{(j,t)}\right]\mathrm{d}M^{\ell j}.
	\end{aligned}
\end{equation}Equating the coefficients before  $\mathrm{d}t$  in (\ref{equ:ito for ana}) and (\ref{equ:adj equ}) respectively, we obtain 
\begin{equation}
	\label{equ:equa dt}
	\begin{aligned}
		&\  \Big(\dot{\Lambda}+\sum_{j=1}^{D}\pi_{\theta_t j}\Lambda_{(j,t)}\Big)\left(\mathbb{X}^{*}-\mathbb{E}^{\theta}_{t}[\mathbb{X}^{*}]\right)
		+ \Lambda A\left(\mathbb{X}^{*}-\mathbb{E}^{\theta}_{t}[\mathbb{X}^{*}]\right)+\Lambda B\left(u^{*}-\mathbb{E}^{\theta}_{t}[u^{*}]\right)\\
		+&\ 	\Big(\dot{\Gamma}+\sum_{j=1}^{D}\pi_{\theta_t j}\Gamma_{(j,t)}\Big)\mathbb{E}^{\theta}_{t}[\mathbb{X}^{*}]
		+	\Gamma\check{A}\mathbb{E}^{\theta}_{t}[\mathbb{X}^{*}]+\Gamma\check{B}\mathbb{E}^{\theta}_{t}[u^{*}]+\dot{\phi}+\sum_{j=1}^{D}\pi_{\theta_t j}\phi_{(j,t)}\\
		+&\ A\left(p-\mathbb{E}^{\theta}_{t}[p]\right)+\check{A}\mathbb{E}^{\theta}_{t}[p]+H\left(\mathbb{X}^{*}-\mathbb{E}^{\theta}_{t}[\mathbb{X}^{*}]\right)
		+ \check{H}\mathbb{E}^{\theta}_{t}[\mathbb{X}^{*}]+G^{\top}\left(u^{*}-\mathbb{E}^{\theta}_{t}[u^{*}]\right)+\check{G}^{\top}\mathbb{E}^{\theta}_{t}[u^{*}]=0.
	\end{aligned}
\end{equation}Taking $\mathbb{E}^{\theta}_t[\cdot]$ on  both  sides of (\ref{equ: ansatz of p}), we get 
\begin{equation}
	\label{equ:cond of ana p}
	\left\{\begin{aligned}
		\mathbb{E}^{\theta}_{t}[p]=&\ \Gamma\mathbb{E}^{\theta}_{t}[\mathbb{X}^{*}]+\phi,\\
	    p-\mathbb{E}^{\theta}_{t}[p]=&\ \Lambda\left(\mathbb{X}^{*}-\mathbb{E}^{\theta}_{t}[\mathbb{X}^{*}]\right).
	\end{aligned}\right.
\end{equation}Injecting  (\ref{equ:cond of ana p}) into (\ref{equ:coupl condi}), detailed  computation leads to  
\begin{equation}
	\label{equ:condi u}
	\left\{\begin{aligned}
		\mathbb{E}^{\theta}_{t}[u^{*}]=&\ -\check{R}^{-1}\left(\widetilde{\check{B}}\mathbb{E}^{\theta}_{t}[\mathbb{X}^{*}] +\mathbb{B}\right),\\		
		u^{*}-\mathbb{E}^{\theta}_{t}[u^{*}]=&\ -R^{-1}\widetilde{B}\left(\mathbb{X}^{*}-\mathbb{E}^{\theta}_{t}[\mathbb{X}^{*}]\right).
	\end{aligned}\right.
\end{equation}By injecting (\ref{equ:cond of ana p}) and (\ref{equ:condi u}) into (\ref{equ:equa dt}), and setting the coefficients of $\mathbb{X}^{*}-\mathbb{E}^{\theta}_t[\mathbb{X}^{*}]$, $\mathbb{E}^{\theta}_t[\mathbb{X}^{*}]$, and the remaining  non-homogeneous terms to zero, it follows that,  for any $\ell\in\mathbb{D}$,  $\Lambda_{(\ell,t)}$, $\Gamma_{(\ell,t)}$, and $\phi_{(\ell,t)}$  should satisfy the following three interconnected equations: 
\begin{equation}
		\label{equ: ricca 1}
		\left\{\begin{aligned}
			&\dot{\Lambda}_{\ell}+A^{\top}_{\ell}\Lambda_{\ell}+\Lambda_{\ell}A_{\ell}+H_{\ell}-\widetilde{B}_{\ell}^{\top}R_{\ell}^{-1}\widetilde{B}_{\ell}+\sum_{j=1}^{D}\pi^{\ell j}\Lambda_{j}=0,\\
			&\Lambda_{(\ell,T)}=S_{\ell},
		\end{aligned}\right.
	\end{equation}
	\begin{equation}
		\label{equ: ricca 2}
		\left\{\begin{aligned}
			&\dot{\Gamma}_{\ell}+\check{A}_{\ell}^{\top}\Gamma_{\ell}+\Gamma_{\ell}\check{A}_{\ell}+\check{H}_{\ell}-	\widetilde{\check{B}}_{\ell}^{\top}\check{R}_{\ell}^{-1}	\widetilde{\check{B}}_{\ell}+\sum_{j=1}^{D}\pi^{\ell j}\Gamma_{j}=0,\\
			&\Gamma_{(\ell,T)}=\check{S}_{\ell},
		\end{aligned}\right.
	\end{equation}and
		\begin{equation}
		\label{equ: first order ode}
		\left\{\begin{aligned}
			&\dot{\phi}_{\ell}+\Big(\check{A}_{\ell}^{\top}-	\widetilde{\check{B}}_{\ell}^{\top}\check{R}_{\ell}^{-1} \check{B}_{\ell}^{\top}\Big)\phi_{\ell}+\Gamma_{\ell}b_{\ell}+\sum_{j=1}^{D}\pi^{\ell j}\phi_{j}=0,\\
			&\phi_{(\ell,T)}=0.
		\end{aligned}\right.
	\end{equation}Equations (\ref{equ: ricca 1}) and (\ref{equ: ricca 2}) are two sets of coupled Riccati equations and (\ref{equ: first order ode}) is a set of  coupled first-order linear ODE. Under assumptions \ref{ass:coef in dyna}, \ref{ass: coe in cost one}, and \ref{ass: coe in cost two},  it follows from \cite[Lemma 1]{Zhang Yin 1999} that (\ref{equ: ricca 1}) and (\ref{equ: ricca 2}) admit unique positive semi-definite solutions $\Lambda_{(\ell,t)}$ and $\Gamma_{(\ell,t)}$ for all $\ell\in\mathbb{D}$. Furthermore, since  all the coefficients in (\ref{equ: first order ode}) are bounded, equation (\ref{equ: first order ode}) has a unique solution $\phi_{(\ell,t)}$ for all $\ell\in\mathbb{D}$ once $\Gamma_{(\ell,t)}$ is determined.	Then, feedback  optimal  control (\ref{equ: opti cont of JLQ}) follows directly  from (\ref{equ:condi u}).

 We now proceed to compute the corresponding optimal cost. Applying It$\widehat{\text{o}}$'s formula to $\left\langle\mathbb{E}^{\theta}_t[\mathbb{X}^{v}_t],\Gamma_{(\theta_t,t)}\mathbb{E}^{\theta}_t[\mathbb{X}^{v}_t]\right.$ $\left.+2\phi_{(\theta_t,t)}\right\rangle$ and  $\left\langle\left(\mathbb{X}^{v}_t-\mathbb{E}^{\theta}_t[\mathbb{X}^{v}_t]\right),\Lambda_{(\theta_t,t)}\left(\mathbb{X}^{v}_t-\mathbb{E}^{\theta}_t[\mathbb{X}^{v}_t]\right)\right\rangle$, integrating from $0$ to $T$,  taking expectation, and inserting the derived expressions to (\ref{equ: rewr cost}),  we have
	 \begin{equation*}
		\begin{aligned}
			J^{F}\left(\xi,\ell_0;v\right)
			=&\ \frac{1}{2}\mathbb{E}\left\{\int_{0}^{T}\left[\left|R^{\frac{1}{2}}\left[v-\mathbb{E}^{\theta}_t[v]+R^{-1}\widetilde{B}\left(\mathbb{X}^v-\mathbb{E}^{\theta}_t[\mathbb{X}^v]\right)\right]\right|^2\right.\right.\\
			&\  \left.\left.+\left|\check{R}^{\frac{1}{2}}\left[\mathbb{E}^{\theta}_t[v]+\check{R}^{-1}\left(	\widetilde{\check{B}}\mathbb{E}^{\theta}_t[\mathbb{X}^v]+\mathbb{B}\right)\right]\right|^2+\mathrm{tr}\left(Q^{\top}\Lambda Q\right)\right.\right.\\
			&\ \left.\left.+2\left\langle \phi,b\right\rangle-|\check{R}^{-\frac{1}{2}}\mathbb{B}|^2\right]\mathrm{d}t+\left\langle \mu,\Gamma_{(\ell_0,0)}\mu+2\phi_{(\ell_0,0)}\right\rangle\right\},
		\end{aligned}
	\end{equation*}which  yields that, under assumption \ref{ass: coe in cost two}, 
		\begin{equation}
		\label{equ: inequ optimal cost}
		\begin{aligned}
			J^{F}\left(\xi,\ell_0;v\right)\geq &\  \frac{1}{2}\mathbb{E}\left\{\int_{0}^{T}\left[-|\check{R}^{-\frac{1}{2}}\mathbb{B}|^2+\mathrm{tr}\left(Q^{\top}\Lambda Q\right)\right.\right.\\
			&\ \left.\left.+2\left\langle \phi,b\right\rangle\right]\mathrm{d}t+\left\langle \mu,\Gamma_{(\ell_0,0)}\mu+2\phi_{(\ell_0,0)}\right\rangle\right\}.
		\end{aligned}
	\end{equation}

The equality  in (\ref{equ: inequ optimal cost}) holds if and only if  $v-\mathbb{E}^{\theta}_t[v]=-R^{-1}\widetilde{B}(\mathbb{X}^v-\mathbb{E}^{\theta}_t[\mathbb{X}^v])$ and $\mathbb{E}^{\theta}_t[v]=-\check{R}^{-1}(	\widetilde{\check{B}}\mathbb{E}^{\theta}_t[\mathbb{X}^v]+\mathbb{B})$, which is equivalent to $ v=-R^{-1}\widetilde{B}(\mathbb{X}^v-\mathbb{E}^{\theta}_t[\mathbb{X}^v])- \check{R}^{-1}(	\widetilde{\check{B}}\mathbb{E}^{\theta}_t[\mathbb{X}^v]+\mathbb{B})$. This coincides exactly with  optimal control (\ref{equ: opti cont of JLQ}). Combining this with (\ref{equ: inequ optimal cost}), we conclude that the optimal cost is given by (\ref{equ:opti ccost of CMF-LQ}).
\end{proof}

\subsection{Solution  of \textbf{(PO-CMF)}}

By virtue of the equivalence between \textbf{(PO-CMF)}  and  \textbf{(FO-CMF)}, we can derive the optimal control and associated optimal cost for   \textbf{(PO-CMF)} from Theorem \ref{lem:opti cont of CMF-SDE}. The result is summarized as follows.

\begin{Theorem}\label{the:opti cont of CMF-LQG}
	Under assumptions \ref{ass:coef in dyna}, \ref{ass: coe in cost one}, and \ref{ass: coe in cost two}, the optimal control of \textbf{(PO-CMF)} admits the following  feedback representation:  
	\begin{equation}
			\label{equ: opti cont of CMF-LQg}
			\begin{aligned}
				u^{*}_t=&\ -R_{(\theta_t,t)}^{-1}\widetilde{B}_{(\theta_t,t)}\left(\mathbb{E}^{Y^*,\theta}_t[X^{*}_t]-\mathbb{E}^{\theta}_t[X^{*}_t]\right)
				 -\check{R}_{(\theta_t,t)}^{-1}\left(	\widetilde{\check{B}}_{(\theta_t,t)}\mathbb{E}^{\theta}_t[X^{*}_t]+\mathbb{B}_{(\theta_t,t)}\right),
			\end{aligned}	
	\end{equation}where    $\Lambda_{(\ell,t)}$, $\Gamma_{(\ell,t)}$, and $\phi_{(\ell,t)}$ with $\ell\in\mathbb{D}$ are solutions of  (\ref{equ: ricca 1})-(\ref{equ: first order ode}).

	The  optimal cost of \textbf{(PO-CMF)} is 
	\begin{equation}
		\label{equ: opti cost of CMF-LQG}
		\begin{aligned}
		\mathscr{V}^P\left(\xi,\ell_0\right)=&\ \frac{1}{2}\mathbb{E}\left\{\int_{0}^{T}\left[-|\check{R}^{-\frac{1}{2}}\mathbb{B}|^2+2\left\langle\phi,b\right\rangle +\mathrm{tr}\left(Q^{\top}\Lambda Q+F\Phi\varphi\Phi F^{\top}+\bar{C}^{\top}\varphi\bar{C}\right)\right]\mathrm{d}t\right.\\
		&\ \left. +\left\langle\mu,\Gamma_{(\ell_0,0)}\mu+2\phi_{(\ell_0,0)}\right\rangle+\mathrm{tr}\left(\varphi_0\sigma\right)\right\},
		\end{aligned}
	\end{equation}where  $(\varphi,\vartheta)$ is the unique solution to the  following BSDE driven by Markov chain:
		\begin{equation}
		\label{equ: ode for varphi}
		\left\{\begin{aligned}
			\mathrm{d}\varphi_t=&\ -\left[\mathcal{A}^{\top}_t\varphi_t+\varphi_t\mathcal{A}_t+H_{(\theta_t,t)}\right]\mathrm{d}t+\vartheta_t\bullet \mathrm{d}M_t,\\
			\varphi_T=&\ S_{\theta_T}. 
		\end{aligned}\right.
	\end{equation}
\end{Theorem}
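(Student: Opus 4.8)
The plan is to read off both assertions from the equivalence $(\textbf{PO-CMF})\leftrightarrow(\textbf{FO-CMF})$ of Theorem \ref{the: equi betw two pro} together with the explicit solution of $(\textbf{FO-CMF})$ in Theorem \ref{lem:opti cont of CMF-SDE}, and then to evaluate the control-free correction $\mathbb{J}(\xi,\ell_0)$ by pairing the estimation error process with the Markov-chain BSDE $(\ref{equ: ode for varphi})$.

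For the optimal control: since $J^{P}(\xi,\ell_0;v)=J^{F}(\xi,\ell_0;v)+\mathbb{J}(\xi,\ell_0)$ and $\mathbb{J}(\xi,\ell_0)$ does not depend on $v$, a control minimizes $J^{P}$ over $\mathcal{U}_{ad}$ if and only if it minimizes $J^{F}$ over $\mathcal{U}_{ad}$; hence the feedback law $(\ref{equ: opti cont of JLQ})$ is also optimal for $(\textbf{PO-CMF})$. It then only remains to rewrite it: recalling $\mathbb{X}^{*}_t=\mathbb{E}^{Y^{*},\theta}_t[X^{*}_t]$ and using the tower property $\mathbb{E}^{\theta}_t[\mathbb{X}^{*}_t]=\mathbb{E}^{\theta}_t[\mathbb{E}^{Y^{*},\theta}_t[X^{*}_t]]=\mathbb{E}^{\theta}_t[X^{*}_t]$, formula $(\ref{equ: opti cont of JLQ})$ becomes $(\ref{equ: opti cont of CMF-LQg})$. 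One should also record admissibility: under this feedback the closed-loop filter equation $(\ref{equ: filter of x})$ is well posed and square integrable, and by Lemma \ref{lem:filtr ident} its solution is $\mathbb{F}^{Y^{u^{*}},\theta}$-adapted, so $u^{*}\in\mathcal{U}_{ad}$.

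For the optimal cost, $\mathscr{V}^{P}(\xi,\ell_0)=\mathscr{V}^{F}(\xi,\ell_0)+\mathbb{J}(\xi,\ell_0)$ with $\mathscr{V}^{F}$ given by $(\ref{equ:opti ccost of CMF-LQ})$, so everything reduces to computing $\mathbb{J}(\xi,\ell_0)=\frac{1}{2}\mathbb{E}\{\int_{0}^{T}\langle\mathcal{E},H\mathcal{E}\rangle\,\mathrm{d}t+\langle\mathcal{E}_T,S_{\theta_T}\mathcal{E}_T\rangle\}$, where $\mathcal{E}$ solves the control-free equation $(\ref{equ:dyna for error})$. First I would establish well-posedness of $(\ref{equ: ode for varphi})$: it is a linear BSDE driven by the finite-state-chain martingale $M$ with bounded (though path-dependent) coefficients $\mathcal{A}_t$ and $H_{(\theta_t,t)}$—the boundedness of $\mathcal{A}$ coming from the boundedness of the Riccati solution $\Phi$ of $(\ref{equ:Riccati equ for error})$—so existence and uniqueness of $(\varphi,\vartheta)\in\mathcal{S}^{2}_{\mathbb{F}^{\theta}}([0,T];\mathbb{S}^{n})\times\mathcal{M}^{2}_{\mathbb{F}^{\theta}}([0,T];\mathbb{S}^{n})$ follow by a contraction argument along the lines of Lemma \ref{lem: well of adj equ}, now using the martingale representation with respect to $M$ alone. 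Then I would apply It\^o's formula for semimartingales to $\langle\mathcal{E}_t,\varphi_t\mathcal{E}_t\rangle$: because $\mathcal{E}$ is continuous and orthogonal to $M$, every covariation term between $\mathcal{E}$ and the $\vartheta\bullet\mathrm{d}M$ part of $\mathrm{d}\varphi$ drops out; the Brownian integrals and the terms $\langle\mathcal{E},\vartheta^{\ell j}\mathcal{E}\rangle\,\mathrm{d}M^{\ell j}$ are true martingales (by the integrability of $\mathcal{E}$, which has moments of all orders since $\xi$ is Gaussian, and of $\vartheta$, with a localization if needed) and vanish in expectation; and after inserting the equation for $\varphi$ and $\mathrm{d}[\mathcal{E}]_t=(\Phi_tF^{\top}F\Phi_t+\bar{C}\bar{C}^{\top})\,\mathrm{d}t$, the drift collapses to $-\langle\mathcal{E},H\mathcal{E}\rangle+\mathrm{tr}(F\Phi\varphi\Phi F^{\top}+\bar{C}^{\top}\varphi\bar{C})$. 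Integrating over $[0,T]$, taking expectation, using $\varphi_T=S_{\theta_T}$ and that $\mathcal{E}_0=\xi-\mu$ is independent of the deterministic $\varphi_0$ (so that $\mathbb{E}\langle\mathcal{E}_0,\varphi_0\mathcal{E}_0\rangle=\mathrm{tr}(\varphi_0\sigma)$), the $\langle\mathcal{E},H\mathcal{E}\rangle$ contributions cancel in the expression for $\mathbb{J}$, leaving $\mathbb{J}(\xi,\ell_0)=\frac{1}{2}[\mathrm{tr}(\varphi_0\sigma)+\mathbb{E}\int_{0}^{T}\mathrm{tr}(F\Phi\varphi\Phi F^{\top}+\bar{C}^{\top}\varphi\bar{C})\,\mathrm{d}t]$. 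Adding this to $\mathscr{V}^{F}$ gives $(\ref{equ: opti cost of CMF-LQG})$.

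I expect the main obstacle to be the handling of the BSDE $(\ref{equ: ode for varphi})$: establishing its well-posedness in the genuinely non-Markovian regime (since $\mathcal{A}$ depends on the entire path of $\theta$ through $\Phi$, $\varphi$ is a true BSDE rather than a deterministic ODE system), and justifying rigorously in the It\^o computation both that the continuous error process has zero covariation with the purely discontinuous part of $\varphi$ and that the local martingales appearing are true martingales under the stated integrability. The remaining steps—the trace manipulations and the cancellation of the $\langle\mathcal{E},H\mathcal{E}\rangle$ terms—are routine, and the optimal-control part is essentially immediate once the equivalence and the tower property are invoked.
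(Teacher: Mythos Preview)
Your proposal is correct and follows essentially the same route as the paper: transfer the optimal control from \textbf{(FO-CMF)} via Theorem~\ref{the: equi betw two pro} and the tower property, then compute $\mathbb{J}(\xi,\ell_0)$ by applying It\^o's formula to $\langle\mathcal{E}_t,\varphi_t\mathcal{E}_t\rangle$ with $(\varphi,\vartheta)$ solving the BSDE~(\ref{equ: ode for varphi}), and add the result to $\mathscr{V}^F$. The paper is slightly terser---it invokes \cite{Cohen  Elliott 2008} directly for the well-posedness of~(\ref{equ: ode for varphi}) rather than sketching a contraction argument, and it does not spell out the covariation/admissibility technicalities you mention---but the structure and the key computations are identical.
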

\begin{proof}
	Optimal control (\ref{equ: opti cont of CMF-LQg})  is obtained  by replacing $\mathbb{X}^{*}_t$ and $\mathbb{E}^{\theta}_t[\mathbb{X}^{*}_t]$ with $\mathbb{E}^{Y^*,\theta}_t[X^{*}_t]$ and $\mathbb{E}^{\theta}_t[X^{*}_t]$ in (\ref{equ: opti cont of JLQ}).

	 In view of (\ref{equ: deco of cost}) and (\ref{equ:opti ccost of CMF-LQ}), it remains to compute  $\mathbb{J}\left(\xi,\ell_0\right)$ in order to derive the  optimal cost $\mathscr{V}^P\left(\xi,\ell_0\right)$ of \textbf{(PO-CMF)}. Under assumptions \ref{ass:coef in dyna} and \ref{ass: coe in cost one}, Theorem 6.1 in \cite{Cohen  Elliott 2008} ensures  that   (\ref{equ: ode for varphi}) has a unique solution $(\varphi,\vartheta)\in\mathcal{S}^2_{\mathbb{F}^{\theta}}([0,T];\mathbb{R}^{n\times n})\times\mathcal{M}^2_{\mathbb{F}^{\theta}}([0,T];\mathbb{R}^{n\times n})$. Applying It$\widehat{\text{o}}$'s formula to  $\left\langle\mathcal{E}_t,\varphi_t\mathcal{E}_t\right\rangle$, where $\mathcal{E}$ satisfies (\ref{equ:dyna for error}),  we have 
	\begin{equation}
		\label{equ:ito for E varphi}
		\begin{aligned}
			 &\ \mathbb{E}\left\{\left\langle \mathcal{E}_T, S_{\theta_T} \mathcal{E}_T\right\rangle -\left\langle\xi-\mu,\varphi_0\left(\xi-\mu\right)\right\rangle\right\}\\
			 =&\ \mathbb{E}\left\{\int_{0}^{T}\left[-\left \langle \mathcal{E}, H \mathcal{E}\right\rangle+\mathrm{tr}\left(F\Phi\varphi\Phi F^{\top}+\bar{C}^{\top}\varphi\bar{C}\right)\right]\mathrm{d}t\right\}.
		\end{aligned}
	\end{equation}It follows from  (\ref{equ: cost inde of cont}) and \eqref{equ:ito for E varphi} that 
\begin{equation}
	\label{equ:cost inde of cont}
	\begin{aligned}
		 \mathbb{J}\left(\xi,\ell_0\right)
		=\frac{1}{2}\mathbb{E} \left\{\int_{0}^{T}\mathrm{tr}\left(F\Phi\varphi\Phi F^{\top}+\bar{C}^{\top}\varphi\bar{C}\right) \mathrm{d}t\right\}+\frac{1}{2}\mathrm{tr}\left(\varphi_0\sigma\right).
	\end{aligned}
\end{equation}Using (\ref{equ: deco of cost}),  (\ref{equ:opti ccost of CMF-LQ}), and (\ref{equ:cost inde of cont}), we obtain
\begin{equation*}
		\begin{aligned}
		\mathscr{V}^P\left(\xi,\ell_0\right)
		=&\ \mathscr{V}^F\left(\xi,\ell_0\right)+ \mathbb{J}\left(\xi,\ell_0\right)\\
		=&\ \frac{1}{2}\mathbb{E}\left\{\int_{0}^{T}\left[-|\check{R}^{-\frac{1}{2}}\mathbb{B}|^2+2\left\langle\phi,b\right\rangle +\mathrm{tr}\left(Q^{\top}\Lambda Q+F\Phi\varphi\Phi F^{\top}+\bar{C}^{\top}\varphi\bar{C}\right)\right]\mathrm{d}t\right.\\
		&\  \left.  +\left\langle\mu,\Gamma_{(\ell_0,0)}\mu+2\phi_{(\ell_0,0)}\right\rangle+\mathrm{tr}\left(\varphi_0\sigma\right)\right\},
		\end{aligned}
	\end{equation*}which gives (\ref{equ: opti cost of CMF-LQG}).
\end{proof}

Theorem \ref{the:opti cont of CMF-LQG} shows that the optimal control of  \textbf{(PO-CMF)} has a feedback form of state estimate $\mathbb{E}^{Y^*,\theta}_t[X^{*}_t]$ and its conditional expectation $\mathbb{E}^{\theta}_t[X^*_t]$. Moreover, the optimal control can be obtained by replacing the true state   with state estimate in the feedback control law of complete observation problem. It means that the separation principle  holds for  \textbf{(PO-CMF)}.

\begin{Remark}
	Theorem~\ref{the:opti cont of CMF-LQG} encompasses the following two classical results as special cases.

	\textit{Case 1}: If the Markov chain is trivial, that is, $\theta$ remains in a fixed regime and does not switch for any $t\in[0,T]$, then its transition rates matrix $\Pi$ becomes the zero matrix and the associated filtration reduces to $\mathcal{F}^{\theta}_t=\{\emptyset,\Omega\}$. In this situation, the conditional expectation $\mathbb{E}^{\theta}_t[X^v_t]$ degenerates to the unconditional expectation $\mathbb{E}[X^v_t]$, and the three sets of interconnected equations (\ref{equ: ricca 1})–(\ref{equ: first order ode}) become  three single equations. Consequently, Theorem~\ref{the:opti cont of CMF-LQG} reduces to the solution of the partially observed mean-field type LQ optimal control problem studied in~\cite{Moon Başar 2024}.

	\textit{Case 2}: If the conditional expectations of the state and control are not taken into account, i.e., the coefficients $\widehat{A}_{(\ell,t)}$, $\widehat{B}_{(\ell,t)}$, $\widehat{F}_{(\ell,t)}$, $\widehat{H}_{(\ell,t)}$, $\widehat{G}_{(\ell,t)}$, $\widehat{R}_{(\ell,t)}$, $\widehat{S}_{\ell}$ are all zero matrices for any $\ell\in\mathbb{D}$ and $t\in[0,T]$, then Theorem~\ref{the:opti cont of CMF-LQG} specializes to the result for the partially observed LQ problem of regime-switching diffusions investigated in~\cite{Dufour adaptive 2002,Ji 1992,Mariton1990}.
\end{Remark}

\section{Applications  and Numerical Simulations}
\label{sec: applications  and numerical simulations}
In this Section, we present two applications and corresponding numerical simulations to validate the effectiveness of our theoretical results.
\subsection{A One-Dimensional LQ Example}

In this part, we consider a one-dimensional example, where the Markov chain $\theta$ takes values in a two-state space $\{1,2\}$ and has transition rates $\pi^{11}=-0.015,\pi^{12}=0.015,\pi^{21}=0.035,\pi^{22}=-0.035$. The coefficients appearing in the state-observation dynamics (\ref{equ:state})-(\ref{equ:obser}) and  cost functional (\ref{equ:ori cost}) are specified in Tables \ref{tab:coe of dyn} and \ref{tab:coe of cost}, respectively.

\begin{table}[h]
\centering
\caption{The coefficients in state-observation dynamics (\ref{equ:state})-(\ref{equ:obser})}
\label{tab:coe of dyn}
\small
\renewcommand{\arraystretch}{0.9}
\begin{minipage}{0.48\textwidth}  
\centering
\begin{tabular}{@{\hspace{0cm}}c@{\hspace{0.5cm}}c@{\hspace{0.5cm}}c@{\hspace{0.5cm}}c@{\hspace{0.5cm}}c@{\hspace{0.5cm}}c@{\hspace{0cm}}}
\hline
  & $A_{(\ell,t)}$ &  $\widehat{A}_{(\ell,t)}$  & $B_{(\ell,t)}$ & $\widehat{B}_{(\ell,t)}$ & $C_{(\ell,t)}$   \\ \hline
$\ell=1$ & $0.5\mathrm{sin}(t)$ & $0.4\mathrm{cos}(t)$ & $1$ & $1$ & $1$  \\
$\ell=2$ & $-0.2$ & $-0.3$ & $-0.2$ & $-0.4$ & $0.2$   \\ \hline
\end{tabular}
\end{minipage}
\hfill  

\begin{minipage}{0.48\textwidth}  
\centering
\begin{tabular}{@{\hspace{0.05cm}}c@{\hspace{0.45cm}}c@{\hspace{0.45cm}}c@{\hspace{0.45cm}}c@{\hspace{0.45cm}}c@{\hspace{0.45cm}}c@{\hspace{0cm}}ccc}
\hline
   & $\bar{C}_{(\ell,t)}$ & $F_{(\ell,t)}$ & $\widehat{F}_{(\ell,t)}$ & $f_{(\ell,t)}$ & $b_{(\ell,t)}$ \\ \hline
$\ell=1$  & $0.4$ & $1$ & $2$ & $0.1e^{-t}$ & $1.5\mathrm{sin}(t)$  \\
$\ell=2$  & $0.1$ & $1$ & $-0.5$ & $0.2$ & $-2/(1+\sqrt{t})$ \\ \hline
\end{tabular}
\end{minipage}

\end{table}

\begin{table}[h]
\centering
\caption{The coefficients in cost functional (\ref{equ:ori cost})}
\label{tab:coe of cost}
\small  
\renewcommand{\arraystretch}{0.9}{
\begin{tabular}{@{\hspace{0cm}}c@{\hspace{0.2cm}}c@{\hspace{0.2cm}}c@{\hspace{0.2cm}}c@{\hspace{0.2cm}}c@{\hspace{0.2cm}}c@{\hspace{0.2cm}}c@{\hspace{0.2cm}}c@{\hspace{0.2cm}}c@{\hspace{0cm}}}
\hline
  & $H_{(\ell,t)}$ &  $\widehat{H}_{(\ell,t)}$  & $G_{(\ell,t)}$ & $\widehat{G}_{(\ell,t)}$ & $R_{(\ell,t)}$ &  $\widehat{R}_{(\ell,t)}$  & $S_{\ell}$ &  $\widehat{S}_{\ell}$     \\ \hline
$\ell=1$ & $1.5$ & $1.5$ & $0.15$&$-0.8$ & $1$ & $1$  & $1$ & $1.5$     \\
$\ell=2$ & $2.5$ & $1$ &$0.2$& $-0.15$& $2$ & $2$ &    $3$ & $2$   \\ \hline
\end{tabular}
}
\end{table}In this example, some coefficients are designed as time-varying functions. It is readily verified that these coefficients satisfy assumptions \ref{ass:coef in dyna}, \ref{ass: coe in cost one}, and \ref{ass: coe in cost two}, hence, Theorem \ref{the:opti cont of CMF-LQG} applies.  We set the terminal time as $T = 50$. Numerical simulations are conducted using the Euler-Maruyama method, and the results are presented in Figs. \ref{fig:simulinks of markov chain and solutions of odes}-\ref{fig:optimal control}. Fig. \ref{fig:(a)trajectory of markov 	chain} illustrates a sample path of the Markov chain. Figs. \ref{fig:(b)solution of Lambda}-\ref{fig:(d)solution of phi} show the solutions of  Riccati equations (\ref{equ: ricca 1}) and (\ref{equ: ricca 2}), and   linear ODE (\ref{equ: first order ode}). Taking Fig. \ref{fig:(b)solution of Lambda} as an  example, the green and blue dashed curves represent the  trajectories of $\Lambda_{(1,t)}$ and $\Lambda_{(2,t)}$, respectively. The red solid curve depicts the trajectory of $\Lambda_{(\theta_t,t)}$, which switches between $\Lambda_{(1,t)}$ and $\Lambda_{(2,t)}$ according to the regime  of $\theta_t$. The evolution of the optimal state $X^{*}_t$, its conditional expectation $\mathbb{E}^{\theta}_t[X^{*}_t]$, and the optimal filter $\mathbb{E}^{Y^*,\theta}_t[X^{*}_t]$ under  optimal control  are depicted in Fig. \ref{fig:optimal state}. It can be seen that the optimal filter $\mathbb{E}^{Y^*,\theta}_t[X^{*}_t]$ closely tracks  the unobservable true state $X^{*}_t$. Fig. \ref{fig:optimal control} displays the trajectories of the optimal control $u^{*}_t$ and its conditional expectation $\mathbb{E}^{\theta}_t[u^{*}_t]$.

	\begin{figure}[htbp!]
		\centering
		\subfloat[Sample path of Markov chain.]{%
			\label{fig:(a)trajectory of markov 	chain}\resizebox*{8cm}{!}{\includegraphics[width=2cm,height=1.5cm]{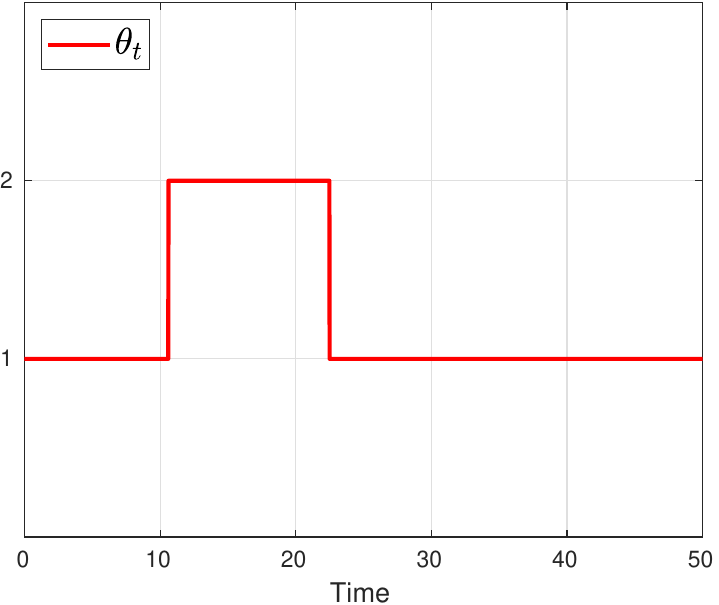}}}\hspace{0cm}
		\subfloat[Solution of Riccati equation (\ref{equ: ricca 1}).]{%
			\label{fig:(b)solution of Lambda}\resizebox*{8cm}{!}{\includegraphics[width=2cm,height=1.5cm]{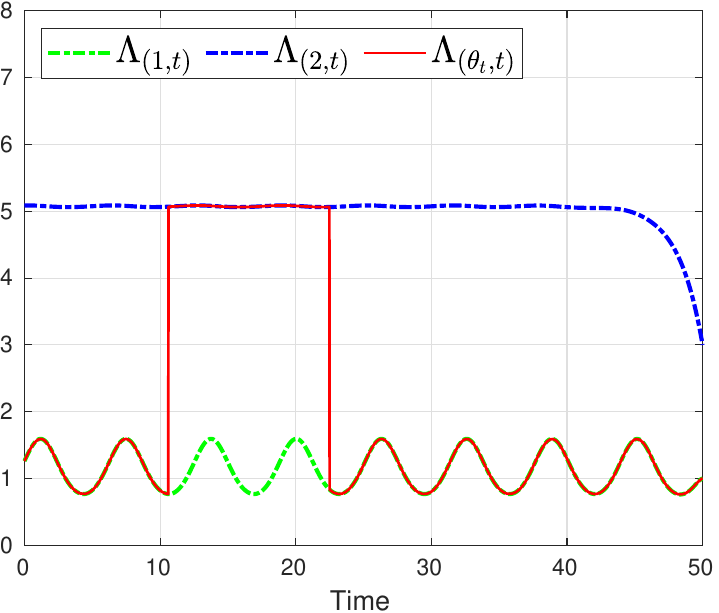}}}\hspace{0cm}
		\subfloat[Solution of Riccati equation (\ref{equ: ricca 2}).]{%
			\label{fig:(c)solution of Gamma}\resizebox*{8cm}{!}{\includegraphics[width=2cm,height=1.5cm]{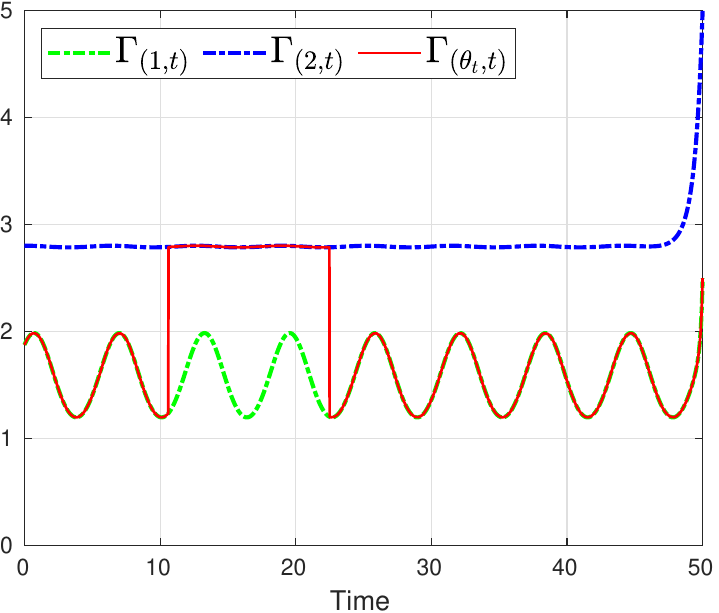}}}\hspace{0cm}
		\subfloat[Solution of  linear ODE (\ref{equ: first order ode}).]{%
			\label{fig:(d)solution of phi}\resizebox*{8cm}{!}{\includegraphics[width=2cm,height=1.5cm]{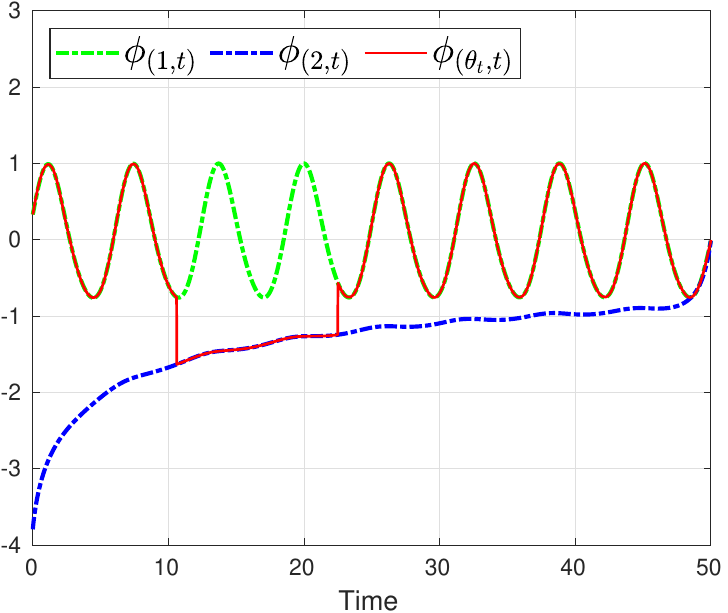}}}
		\caption{Simulations for Markov chain, solutions of Riccati equations (\ref{equ: ricca 1}), (\ref{equ: ricca 2}) and linear ODE (\ref{equ: first order ode}).}
		\label{fig:simulinks of markov chain and solutions of odes}
\end{figure}

 \begin{figure}[htbp!]
    \centering
      \includegraphics[width=6in]{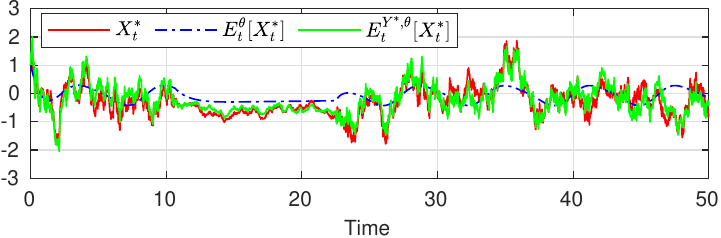}
    \caption{Trajectories of $X^{*}_t$,  $\mathbb{E}^{\theta}_t[X^{*}_t]$, and $\mathbb{E}^{Y^*,\theta}_t[X^{*}_t]$.}
    \label{fig:optimal state}
  \end{figure}

 \begin{figure}[htbp!]
    \centering
      \includegraphics[width=6in]{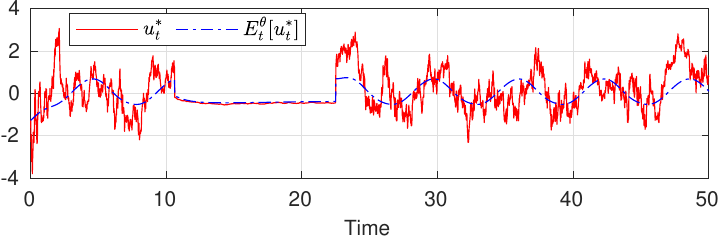}
    \caption{Trajectories of $u^{*}_t$ and $\mathbb{E}^{\theta}_t[u^{*}_t]$.}
    \label{fig:optimal control}
  \end{figure}

\subsection{Coupled Electrical Machines}\label{sec:coupled ele machine}
In this part, we study an optimal control problem for a network of coupled electrical machines. Inspired by \cite{Loparo Blankenship 2003}, we consider the following $M$ second-order systems that describe the dynamics of the machines operating within the network: for $m\in\{1,\dots,M\}$,
\begin{equation}
	\label{equ:model of ele mech}
	\begin{aligned}
		&\ \ddot{\varphi}_{m}(t)+2\alpha_{m}\dot{\varphi}_{m}(t)+\omega^2_{m}\varphi_{m}(t)
		= \sum_{n=1}^{M}\epsilon\mu_{mn}(t)\varphi_{n}(t)+b_{m}v_{m}(t)+\bar{C}_{m}\bar{\mathbb{W}}_m(t).
	\end{aligned}
\end{equation}In \eqref{equ:model of ele mech}, $M$ denotes the number of machines, and $\varphi_{m}(t)$ represents the deviation of the rotor angle of the $m$-th machine at time $t$ with respect to its nominal value.  Parameters $\alpha_{m}$ and $\omega_{m}$ correspond to the damping coefficient and natural frequency of the $m$-th machine, respectively. $v_m(t)$ denotes the mechanical power control, which can adjust the input mechanical power of the generator to compensate for the speed deviations.  $b_m$ and $\bar{C}_m$ are the effectiveness factor of control input and noise for $m$-th machine. Following \cite{Loparo Blankenship 2003}, we assume $0=\alpha_{1}<\alpha_{2}\leq \cdots \leq \alpha_{M}$, which implies that the first machine has zero damping, whereas all  remaining machines have positive damping. Coefficient $\epsilon$ characterizes the strength of the coupling among the $M$ machines.  Random telegraph process $\mu_{mn}(t)$ models the intensity of  coupling fluctuations between machines $m$ and $n$. Process $\bar{\mathbb{W}}_m(t)$ denotes the engineering white noise, which satisfies certain properties. For instance, for any $t' \neq t''\in[0,T]$, $\bar{\mathbb{W}}_m(t')$ and $\bar{\mathbb{W}}_m(t'')$ are independent; the process is stationary with $\mathbb{E}[\bar{\mathbb{W}}_m(t)] = 0$ for all $t$.

Following the treatment in \cite[Section 10.3.1]{Costa Fragoso Todorov 2012}, we restrict our attention to the case of two machines for simplicity, i.e., $M = 2$. Furthermore, to obtain a more explicit formulation, we assume that the coupling coefficients satisfy $\mu_{11}(t) = \mu_{22}(t) = \theta_{1}(t)$ and $\mu_{12}(t) = \mu_{21}(t) = \theta_{2}(t)$, where $\theta_{1}(t)$ and $\theta_{2}(t)$ are independent, time homogeneous Markov chains taking values in $\{1, -1\}$. The transition rates matrices $\Pi_{m}$ of $\theta_{m}(t)$ are given by $\renewcommand{\arraystretch}{0.8}\Pi_{m}=\left[\begin{array}{@{}cc@{}}  
-\pi_m & \pi_m \\
\pi_m & -\pi_m
\end{array}\right]$ for $m\in\{1,2\}$.

To derive a state space representation of second-order systems (\ref{equ:model of ele mech}), we define 
\begin{equation}
	\label{equ:sta of ele mac}
	\begin{aligned}
		X^v_t=\left[\varphi_1(t),\varphi_2(t),\dot{\varphi}_1(t),\dot{\varphi}_2(t)\right]^{\top},
	\end{aligned}
\end{equation}and assume $X^v_0=[1,1,0,0]^{\top}$. Let $\theta_t=h(\theta_1(t),\theta_2(t))$ with $h(-1,-1)=1$, $h(-1,1)=2$, $h(1,-1)=3$, and $h(1,1)=4$. Then process $\theta$ is a  time homogeneous Markov chain with state space $\{1,2,3,4\}$ and its translation rates matrix  is 
\begin{equation*}
\renewcommand{\arraystretch}{0.8}
	\begin{aligned}
		\Pi=\left[\begin{array}{@{}c@{}c@{}c@{}c@{}}
		-(\pi_1+\pi_2) & \pi_2 & \pi_1 & 0\\ 
		\pi_2 & -(\pi_1+\pi_2) & 0 & \pi_1\\
		\pi_1 & 0 & -(\pi_1+\pi_2) & \pi_2\\
		0 & \pi_1 & \pi_2 & -(\pi_1+\pi_2)\\
	\end{array}\right].
	\end{aligned}
\end{equation*}Let $\bar{W} = [\bar{W}_{1}, \bar{W}_{2}]^{\top}$ be an $\mathbb{R}^2$-valued standard Brownian motion, and $v = [v_{1}, v_{2}]^{\top}$ denote the control process. Following \cite[Section 3.1]{Oksendal}, we replace the engineering white noises $\bar{\mathbb{W}}_{1}(t)$ and $\bar{\mathbb{W}}_{2}(t)$ by the stochastic differentials $\mathrm{d}\bar{W}_{1}(t)$ and $\mathrm{d}\bar{W}_{2}(t)$, respectively. Under this formulation, $X^{v}_t$ defined in (\ref{equ:sta of ele mac}) satisfies 
\begin{equation*}
	\label{equ: stat equ of ele}
	\left\{\begin{aligned}
		\mathrm{d}X^v_t=&\ \left[A_{(\theta_t,t)}X^v_t+B_{(\theta_t,t)}v_t\right]\mathrm{d}t+\bar{C}_{(\theta_t,t)}\mathrm{d}\bar{W}_t,\\
		X^v_0=&\ [1,1,0,0]^{\top},
	\end{aligned}\right.
\end{equation*}where
\begin{equation*}
	\renewcommand{\arraystretch}{0.8} 
		\begin{aligned}
			A_{(1,t)}=&\  \left[\begin{array}{cccc}
		0 & 0 & 1 & 0\\	
		0 & 0 & 0 & 1\\
		-\omega_1^2-\epsilon & -\epsilon & 0 & 0\\
		-\epsilon & -\omega_2^2-\epsilon & 0 & -2\alpha_2
		\end{array}\right],\\
			A_{(2,t)}=&\ \left[\begin{array}{cccc}
		0 & 0 & 1 & 0\\	
		0 & 0 & 0 & 1\\
		-\omega_1^2-\epsilon & \epsilon & 0 & 0\\
		\epsilon & -\omega_2^2-\epsilon & 0 & -2\alpha_2
		\end{array}\right],\\
		A_{(3,t)}=&\ \left[\begin{array}{cccc}
		0 & 0 & 1 & 0\\	
		0 & 0 & 0 & 1\\
		-\omega_1^2+\epsilon & -\epsilon & 0 & 0\\
		-\epsilon & -\omega_2^2+\epsilon & 0 & -2\alpha_2
		\end{array}\right],\\
		A_{(4,t)}=&\ \left[\begin{array}{cccc}
		0 & 0 & 1 & 0\\	
		0 & 0 & 0 & 1\\
		-\omega_1^2+\epsilon & \epsilon & 0 & 0\\
		\epsilon & -\omega_2^2+\epsilon & 0 & -2\alpha_2
		\end{array}\right],\\
		\end{aligned}
\end{equation*}and, for $\ell\in\{1,2,3,4\}$,
\begin{equation*}
	\renewcommand{\arraystretch}{0.8} 
		\begin{aligned}
		B_{(\ell,t)}=&\ \left[\begin{array}{cc}
		b_1 & 0\\
		0 & b_2\\
		\end{array}\right],
		\bar{C}_{(\ell,t)}=\ \left[\begin{array}{cccc}
		0 & 0 & \bar{C}_{1} & 0\\
		0 & 0 & 0 & \bar{C}_{2}\\
		\end{array}\right]^{\top}.\\
		\end{aligned}
\end{equation*}

In practice, the exact  parameters of the  electrical machines in operation  are not  directly observed and can only be measured by some sensors. Assume that the measurement process $Y^v_t$ is given by 
\begin{equation*}
	\label{equ: obs equ of ele}
	\left\{\begin{aligned}
		\mathrm{d}Y^v_t=&\ F_{(\theta_t,t)}X^v_t\mathrm{d}t+\mathrm{d}W_t,\\
		Y^v_0=&\ [0, 0,  0,  0]^{\top},
	\end{aligned}\right.
\end{equation*}where $F_{(\ell,t)}$ is bounded matrix function for each $\ell\in\{1,2,3,4\}$ and  measurement noise $W$ is an $\mathbb{R}^4$-valued standard Brownian motion that is independent of $\bar{W}$. 

As found in \cite{Loparo Blankenship 2003}, the coupling structure between the machines can induce instability in the system, as energy would be transferred from the machine with positive damping to the one with zero damping. To suppress such undesired behavior, we aim to minimize the following performance criterion:
\begin{equation}
	\label{equ：cost of con var}
	\begin{aligned}
		 J^P\left(x,\ell_{0};v\right)
		= \frac{1}{2}\mathbb{E}\Bigg\{\int_{0}^{T}\Big[\left\langle v_t,R_{(\theta_t,t)}v_t\right\rangle+\lambda\sum_{m=1}^{2}\left(\mathbb{VAR}\left[\varphi_m(t)|\mathcal{F}^{\theta}_t\right] +\mathbb{VAR}\left[\dot{\varphi}_m(t)|\mathcal{F}^{\theta}_t\right]\right)\Big]\mathrm{d}t\Bigg\},
	\end{aligned}
\end{equation}where $R_{(\ell,t)}$ is a symmetric and uniformly positive definite matrix  for each $\ell\in\{1,2,3,4\}$ and $\lambda>0$. The conditional variances of $\varphi_m(t)$ and $\dot{\varphi}_m(t)$ ($m=1,2$) are included in the cost functional to penalize random oscillations in the system, thereby promoting stability as the cost is minimized. From the definition of conditional variance,  cost functional \eqref{equ：cost of con var} can be equivalently rewritten as
 \begin{equation*}
	\label{equ：cost of qua form}
	\begin{aligned}
	    J^P\left(x,\ell_{0};v\right)
		=&\ \frac{1}{2}\mathbb{E}\Bigg\{\int_{0}^{T}\left[\left\langle v_t,R_{(\theta_t,t)}v_t\right\rangle+\left\langle X^v_t,H_{(\theta_t,t)}X^v_t\right\rangle+\left\langle \mathbb{E}^{\theta}_t[X^v_t],\widehat{H}_{(\theta_t,t)}\mathbb{E}^{\theta}_t[X^v_t]\right\rangle\right]\mathrm{d}t\Bigg\},
	\end{aligned}
\end{equation*}where $H_{(\ell,t)}=\lambda\mathcal{I}_{4\times 4}$ and $\widehat{H}_{(\ell,t)}=-H_{(\ell,t)}$ for each $\ell\in\{1,2,3,4\}$.

 By applying Theorem~\ref{the:opti cont of CMF-LQG}, we can obtain the optimal  mechanical power control  $u^*$ for the coupled electrical machines system. To demonstrate the effectiveness of the proposed result in an intuitive manner, we perform a numerical simulation.  For the parameters that also appear in \cite{Loparo Blankenship 2003}, we adopt the same system data as in their numerical example:
\begin{equation*}
\epsilon=0.1, \ \omega_1=1, \ \omega_2=10, \ \alpha_2=0.05, \ \pi_1=2.5, \ \pi_2=0.5.
\end{equation*}The remaining parameters in our model  are specified as
\begin{equation*}
	\begin{aligned}
		b_1=&\ b_2=0.5, \bar{C}_1=\bar{C}_2=0.5,\lambda=0.2,\\ F_{(\ell,t)}=&\ 2\mathcal{I}_{4\times 4},
		R_{(\ell,t)}=2\mathcal{I}_{2\times 2},\ell\in\{1,2,3,4\}.
	\end{aligned}
\end{equation*}Let terminal time $T=20$. We obtain the following numerical simulations  using the Euler-Maruyama method. In Fig. \ref{fig:coupled electrical optimal state}, we illustrate the trajectories of the optimal angle deviations $\varphi^*_1(t)$ and  $\varphi^*_2(t)$, the optimal  angular velocities $\dot{\varphi}^*_1(t)$ and $\dot{\varphi}^*_2(t)$, the optimal mechanical power  control $u^*_1(t)$  and $u^*_2(t)$ together with the mode evolution of $\theta_t$.


\begin{figure}[htbp!]
  \centering
  \subfloat[Trajectories of optimal  angle deviation $\varphi^*_1(t)$ and $\varphi^*_2(t)$.]{%
    \includegraphics[width=0.8\textwidth]{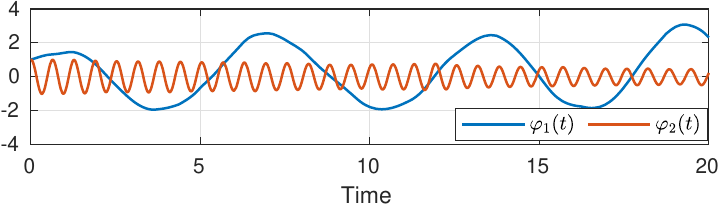}%
    \label{fig:image1}
  }
  \hspace{0.5cm} 
  \subfloat[Trajectories of optimal  angular velocity $\dot{\varphi}^*_1(t)$ and  $\dot{\varphi}^*_2(t)$.]{%
    \includegraphics[width=0.8\textwidth]{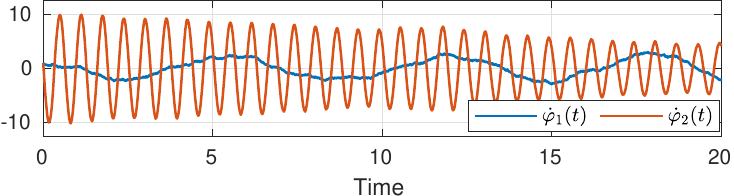}%
    \label{fig:image2}
  }

  \subfloat[Trajectories of optimal mechanical power control $u^*_1(t)$ and  $u^*_2(t)$.]{%
    \includegraphics[width=0.8\textwidth]{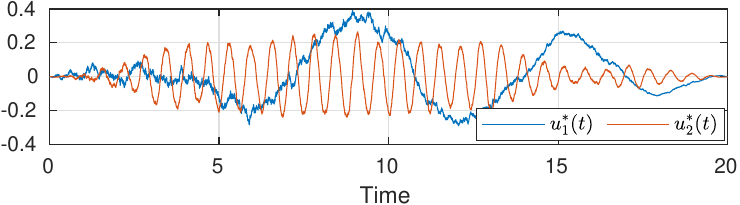}%
    \label{fig:image3}
  }
  \hspace{0.5cm}
  \subfloat[Sample path of Markov chain.]{%
    \includegraphics[width=0.8\textwidth]{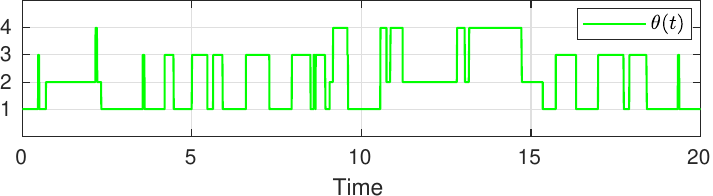}%
    \label{fig:image4}
  }
  \caption{Simulations for coupled electrical machines.}
  \label{fig:coupled electrical optimal state}
\end{figure}

Fig. \ref{fig:comparison between angular velocity} compares the angular velocity $\dot{\varphi}_1(t)$ of the first machine under the cases with and without optimal control. Specifically, Fig. \ref{fig:(a)state with optimal control} displays 50 simulated trajectories of $\dot{\varphi}_1(t)$ under the optimal control $u^*(t)$, while Fig. \ref{fig:(b)state without optimal control} presents the corresponding trajectories without any control. It can be observed that the trajectories in Fig. \ref{fig:(a)state with optimal control} are more tightly clustered than those in Fig. \ref{fig:(b)state without optimal control}, indicating that the optimal control $u^*(t)$ effectively mitigates random oscillations and enhances system stability. 
	\begin{figure}[htbp!]
		\centering
		\subfloat[50 trajectories of angular velocity $\dot{\varphi}^*_1(t)$ with optimal control $u^*(t)$.]{%
			\label{fig:(a)state with optimal control}\resizebox*{7cm}{!}{\includegraphics[width=2cm,height=1.5cm]{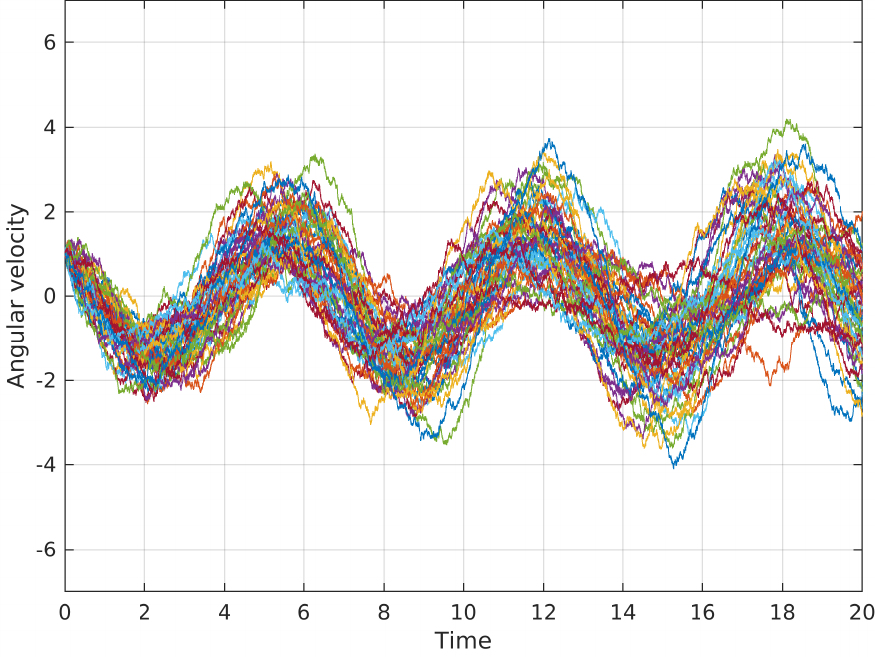}}}\hspace{0cm}
		\subfloat[50 trajectories of angular velocity $\dot{\varphi}_1(t)$ without any  control.]{%
			\label{fig:(b)state without optimal control}\resizebox*{7cm}{!}{\includegraphics[width=2cm,height=1.5cm]{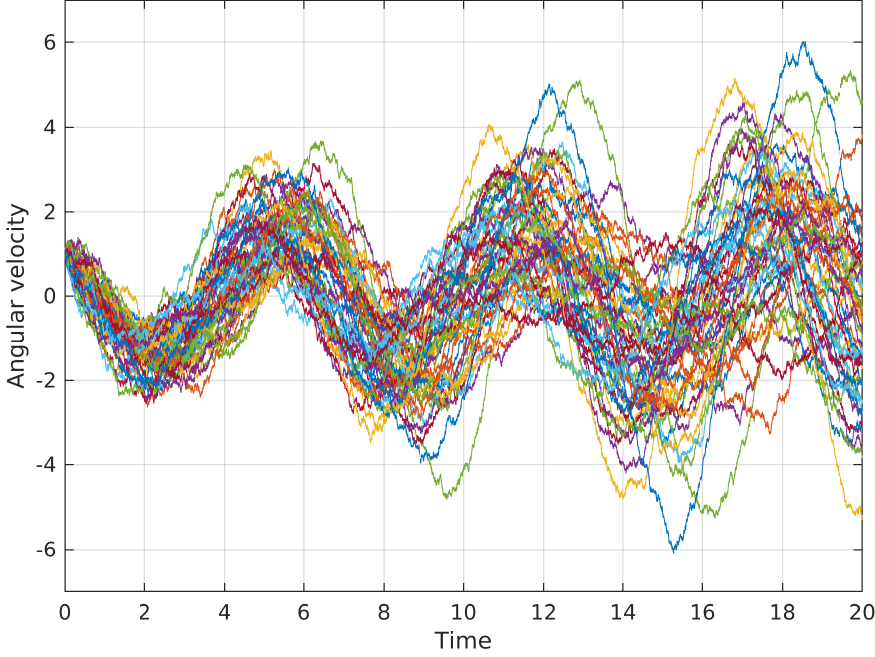}}}\hspace{0.5cm}
		\caption{Comparison between angular velocity $\dot{\varphi}^*_1(t)$ (with optimal control $u^*(t)$)  and $\dot{\varphi}_1(t)$ (without any  control).}
		\label{fig:comparison between angular velocity}
	\end{figure}

\section{Conclusion}
\label{sec: conclusion}
In this paper, we investigate a conditional mean-field type LQ  optimal control problem driven by a regime-switching diffusion process under partial observation. In the proposed model, the conditional expectations of the state and control explicitly appear in both the system dynamics and the cost functional.  The true state cannot be fully observed.  We establish a separation principle for the problem through the following steps. First, we introduce a modified definition of the admissible controls set, which eliminates the coupled dependence between the control and observation processes. Second, we derive a closed system of  filtering equations by exploiting the conditionally Gaussian property of the state. Then, we reformulate the original problem into an equivalent fully observed control problem driven by the filtering process. Using the stochastic maximum principle and a decoupling technique, we obtain the optimal control for the equivalent problem. Finally,   the solution of the original problem follows. These results extend the classical separation principle to the class of conditional mean-field systems with regime switching.

There are two interesting directions for future research. The first one is to explore the separation principle for discrete-time conditional mean-field systems, which remains challenging due to the invalidity of stochastic analysis tools such as It$\widehat{\text{o}}$'s formula. The second one is to investigate conditional mean-field problems in which both the exact values of  Markov chain and  system state are unavailable. 
 
\newpage


\begin{thebibliography}{99}
\bibitem{Anderson 2011}
Andersson, D., \& Djehiche, B. (2011). A maximum principle for SDEs of mean-field type. \textit{Applied Mathematics and Optimization}, 63(3), 341--356.

\bibitem{Bain Crisan 2009}
Bain, A., \& Crisan, D. (2009). \textit{Fundamentals of Stochastic Filtering}. Springer, New York, NY, USA.

\bibitem{Barreiro-Gomez linear 2019}
Barreiro-Gomez, J., Duncan, T. E., \& Tembine, H. (2019). Linear–quadratic mean-field-type games: Jump–diffusion process with regime switching. \textit{IEEE Transactions on Automatic Control}, 64(10), 4329--4336.

\bibitem{Bensoussan1982}
Bensoussan, A. (1982). \textit{Stochastic Control by Functional Analysis Methods}. North-Holland Publishing Company, New York, NY, USA.

\bibitem{Bensoussan 1992}
Bensoussan, A. (1992). \textit{Stochastic Control of Partially Observable Systems}. Cambridge University Press, Cambridge, U.K.

\bibitem{Bjork  Tomas Agatha 2017}
Bj\"ork, T., Khapko, M., \& Murgoci, A. (2017). On time-inconsistent stochastic control in continuous time. \textit{Finance and Stochastics}, 21, 331--360.

\bibitem{Costa discrete 1995}
Costa, O. L. V., \& Fragoso, M. D. (1995). Discrete-time LQ-optimal control problems for infinite Markov jump parameter systems. \textit{IEEE Transactions on Automatic Control}, 40(12), 2076--2088.

\bibitem{Costa Fragoso Todorov 2012}
do Valle Costa, O. L. V., Fragoso, M. D., \& Todorov, M. G. (2012). \textit{Continuous-Time Markov Jump Linear Systems}. Springer, Berlin, Germany.

\bibitem{Cohen  Elliott 2008}
Cohen, S. N., \& Elliott, R. J. (2008). Solutions of backward stochastic differential equations on Markov chains. \textit{Communications in Stochastic Analysis}, 2(2), Art. 5.

\bibitem{Dawson 1983}
Dawson, D. A. (1983). Critical dynamics and fluctuations for a mean-field model of cooperative behavior. \textit{Journal of Statistical Physics}, 31(1), 29--85.

\bibitem{Donnelly 2008}
Donnelly, C. (2008). Convex duality in constrained mean-variance portfolio optimization under a regime-switching model. Doctoral dissertation, University of Waterloo.

\bibitem{Dufour adaptive 2002}
Dufour, F., \& Elliott, R. J. (2002). Adaptive control of linear systems with Markov perturbations. \textit{IEEE Transactions on Automatic Control}, 43(3), 351--372.

\bibitem{Fujisaki Kallianpur Kunita 1972}
Fujisaki, M., Kallianpur, G., \& Kunita, H. (1972). Stochastic differential equations for the nonlinear filtering problem. \textit{Osaka Journal of Mathematics}, 9, 19--40.

\bibitem{Ji 1992}
Ji, Y., \& Chizeck, H. J. (1992). Jump linear quadratic Gaussian control in continuous time. \textit{IEEE Transactions on Automatic Control}, 37(12), 1884--1892.

\bibitem{Li 2012}
Li, J. (2012). Stochastic maximum principle in the mean-field controls. \textit{Automatica}, 48(2), 366--373.

\bibitem{Liptser 1978 1}
Liptser, R. S., \& Shiryayev, A. N. (1978). \textit{Statistics of Random Processes: I: General Theory}. Springer, Berlin, Germany.

\bibitem{Lipster 1978 2}
Liptser, R. S., \& Shiryayev, A. N. (1978). \textit{Statistics of Random Processes II: Applications}. Springer, Berlin, Germany.

\bibitem{Lipster 1989}
Liptser, R. S., \& Shiryayev, A. N. (1989). \textit{Theory of Martingales}. Kluwer Academic, Dordrecht, The Netherlands.

\bibitem{Loparo Blankenship 2003}
Loparo, K., \& Blankenship, G. (2003). A probabilistic mechanism for small disturbance instabilities in electric power systems. \textit{IEEE Transactions on Circuits and Systems}, 32(2), 177--184.

\bibitem{Lv Xiong Zhang 2023}
Lv, S., Xiong, J., \& Zhang, X. (2023). Linear quadratic leader–follower stochastic differential games for mean-field switching diffusions. \textit{Automatica}, 154, 111072.

\bibitem{Lv linear 2023}
Lv, S., Wu, Z., \& Xiong, J. (2024). Linear quadratic nonzero-sum mean-field stochastic differential games with regime switching. \textit{Applied Mathematics and Optimization}, 90(2), 44.

\bibitem{Mariton1990}
Mariton, M. (1990). \textit{Jump Linear Systems in Automatic Control}. Marcel Dekker, New York, NY, USA.

\bibitem{Moon Başar 2024}
Moon, J., \& Başar, T. (2024). Separation principle for partially observed linear–quadratic optimal control for mean-field type stochastic systems. \textit{IEEE Transactions on Automatic Control}, 69(12), 8370--8385.

\bibitem{Nguyen 2020}
Nguyen, S. L., Nguyen, D. T., \& Yin, G. (2020). A stochastic maximum principle for switching diffusions using conditional mean-fields with applications to control problems. \textit{ESAIM: Control, Optimisation and Calculus of Variations}, 26, 69.

\bibitem{Nguyen George Yin Hoang 2020}
Nguyen, S. L., Yin, G., \& Hoang, T. A. (2020). On laws of large numbers for systems with mean-field interactions and Markovian switching. \textit{Stochastic Processes and Their Applications}, 130(1), 262--296.

\bibitem{Nguyen general 2021}
Nguyen, S. L., Yin, G., \& Nguyen, D. T. (2021). A general stochastic maximum principle for mean-field controls with regime switching. \textit{Applied Mathematics and Optimization}, 84(3), 3255--3294.

\bibitem{Oksendal}
Øksendal, B. (2013). \textit{Stochastic Differential Equations: An Introduction with Applications}. Springer, Berlin, Germany.

\bibitem{Protter 2005}
Protter, P. E. (2005). \textit{Stochastic Integration and Differential Equations}. Springer, Berlin, Germany.

\bibitem{Rolón Gutiérrez markovian 2024}
Rolón~Gutiérrez, E. J., Nguyen, S. L., \& Yin, G. (2024). Markovian-switching systems: Backward and forward–backward stochastic differential equations, mean-field interactions, and nonzero-sum differential games. \textit{Applied Mathematics and Optimization}, 89(2), 33.

\bibitem{Rogers 2000}
Rogers, L. C. G., \& Williams, D. (2000). \textit{Diffusions, Markov Processes and Martingales: Vol. 2, It\^{o} Calculus}. Cambridge University Press, Cambridge, U.K.

\bibitem{Sznitman 1989}
Sznitman, A. S. (1991). Topics in propagation of chaos, in \textit{École d'Été de Probabilités de Saint-Flour XIX—1989}. Springer, Berlin, Germany.

\bibitem{Sun 2017}
Sun, J. (2017). Mean-field stochastic linear–quadratic optimal control problems: Open-loop solvabilities. \textit{ESAIM: Control, Optimisation and Calculus of Variations}, 23(3), 1099--1127.

\bibitem{Sun Xiong 2023}
Sun, J., \& Xiong, J. (2023). Stochastic linear–quadratic optimal control with partial observation. \textit{SIAM Journal on Control and Optimization}, 61(3), 1231--1247.

\bibitem{Sworder 1983}
Sworder, D., \& Rogers, R. (1983). An LQ-solution to a control problem associated with a solar thermal central receiver. \textit{IEEE Transactions on Automatic Control}, 28(10), 971--978.

\bibitem{Wang 2022}
Wang, G., \& Wu, Z. (2022). A maximum principle for mean-field stochastic control system with noisy observation. \textit{Automatica}, 137, 110135.

\bibitem{Williams 1991}
Williams, D. (1991). \textit{Probability with Martingales}. Cambridge University Press, Cambridge, U.K.

\bibitem{Wonham 1968}
Wonham, W. M. (1968). On the separation theorem of stochastic control. \textit{SIAM Journal on Control and Optimization}, 6(2), 312--326.



\bibitem{Xiong mean 2007}
Xiong, J., \& Zhou, X. Y. (2007). Mean–variance portfolio selection under partial information. \textit{SIAM Journal on Control and Optimization}, 46(1), 156--175.

\bibitem{Yao Zhang Zhou 2006}
Yao, D. D., Zhang, Q., \& Zhou, X. Y. (2006). A regime-switching model for European options, in \textit{Stochastic Processes, Optimization, and Control Theory: Applications in Financial Engineering, Queueing Networks, and Manufacturing Systems}, 281--300.

\bibitem{Yin hybrid 2009}
Yin, G., \& Zhu, C. (2009). \textit{Hybrid Switching Diffusions: Properties and Applications}. Springer, Berlin, Germany.

\bibitem{Yong 2013}
Yong, J. (2013). Linear–quadratic optimal control problems for mean-field stochastic differential equations. \textit{SIAM Journal on Control and Optimization}, 51(4), 2809--2838.

\bibitem{Yong Zhou 1999}
Yong, J., \& Zhou, X. Y. (1999). \textit{Stochastic Controls: Hamiltonian Systems and HJB Equations}. Springer, Berlin, Germany.

\bibitem{Zhang Yin 1999}
Zhang, Q., \& Yin, G.  (1999). On nearly optimal controls of hybrid LQG problems. \textit{IEEE Transactions on Automatic Control}, 44(12), 2271--2282.

\bibitem{Zhang general 2018}
Zhang, X., Sun, Z., \& Xiong, J. (2018). A general stochastic maximum principle for a Markov regime-switching jump-diffusion model of mean-field type. \textit{SIAM Journal on Control and Optimization}, 56(4), 2563--2592.

\bibitem{Zhou 2003}
Zhou, X. Y., \& Yin, G. (2003). Markowitz’s mean–variance portfolio selection with regime switching: A continuous-time model. \textit{SIAM Journal on Control and Optimization}, 42(4), 1466--1482.

\end{thebibliography}

\end{document}